\def\N{\mathbb{N}}
\def\F{\mathbb{F}}
\newtheorem{theorem}{Theorem}[section]
\newtheorem{proposition}[theorem]{Proposition}
\newtheorem{corollary}[theorem]{Corollary}
\newtheorem{lemma}[theorem]{Lemma}
\begin{document}
\title{All bi-unitary perfect polynomials over $\F_2$ with at most four irreducible factors}
 \author{Olivier Rahavandrainy \\
Univ Brest, UMR CNRS 6205\\
Laboratoire de Math\'ematiques de Bretagne Atlantique\\
e-mail : olivier.rahavandrainy@univ-brest.fr}
\maketitle
Mathematics Subject Classification (2010): 11T55, 11T06.
\newpage~\\
{\bf{Abstract}}\\
We give, in this paper, all bi-unitary perfect polynomials
over the prime field~$\F_2$, with at most four irreducible factors.

{\section{Introduction}}
Let $S \in \F_2[x]$ be a nonzero polynomial. We say that $S$ is odd if $\gcd(S,x(x+1))=1$, $S$ is even
if it is not odd.
A {\it{Mersenne} $(prime)$} is a polynomial (irreducible) of the form $1+x^a(x+1)^b$, with $\gcd(a,b) = 1$.
A divisor $D$ of $S$ is called unitary if $\gcd(D,S/D)=1$. We denote by $\gcd_u(S,T)$ the greatest
common unitary divisor of $S$ and $T$.
A divisor $D$ of $S$ is called bi-unitary if $\gcd_u(D,S/D)=1$.\\
We denote by $\sigma(S)$ (resp. $\sigma^*(S)$, $\sigma^{**}(S)$) the sum of all divisors
(resp. unitary divisors, bi-unitary divisors)
of $S$.
The functions $\sigma$, $\sigma^*$ and $\sigma^{**}$ are all multiplicative.
We say that a polynomial $S$ is \emph{perfect} (resp. \emph{unitary perfect},
\emph{bi-unitary perfect}) if $\sigma(S) = S$
(resp. $\sigma^*(S)=S$, $\sigma^{**}(S)=S$). \\
Finally, we say that $S$ is \emph{indecomposable bi-unitary perfect} ($i.b.u.p.$)
if it is bi-unitary perfect but it is not a product of two coprime
nonconstant bi-unitary perfect polynomials.\\
As usual, $\omega(S)$ designates the number of distinct irreducible
factors of $S$.\\
Several studies are done about perfect and unitary perfect.
In particular, we gave (\cite{Gall-Rahav7}, \cite{Gall-Rahav5}, \cite{Gall-Rahav11}) the list of
all (unitary) perfect polynomials $A$ over $\F_2$ (even or not),
with $\omega(A) \leq 4$. Recently, we also get more results in \cite{Gall-Rahav14}.

The case where $A$ has only Mersenne prime as odd divisors is already treated in \cite{Gall-Rahav15}.

We are interested in bi-unitary perfect polynomials (b.u.p. polynomials) $A$
with $\omega(A) \leq 4$.
If $A \in \F_2[x]$ is nonconstant b.u.p., then $x(x+1)$ divides $A$ so that $\omega(A) \geq 2$
(see Lemma \ref{nombreminimal}). Moreover, the only
b.u.p. polynomials over $\F_2$ with exactly two prime
factors are $x^2(x+1)^{2}$ and $x^{2^n-1}(x+1)^{2^n-1}$, for any nonnegative integer $n$
(\cite{BeardBiunitary}, Theorem 5).
We prove (Theorems \ref{caseomega3} and \ref{caseomega4}) that the only
b.u.p. polynomials $A \in \F_2$, with $\omega(A) \in \{3, 4\}$, are those given
in \cite{BeardBiunitary}, plus four other ones. Note that all odd irreducible divisors of the $C_j$'s are Mersenne primes (there is a misprint for $C_6$, in {\rm{\cite{BeardBiunitary}}).

In the rest of the paper, for $S \in \F_2[x]$, we denote by
$\overline{S}$ the polynomial obtained from $S$ with $x$ replaced by
$x+1$: $\overline{S}(x) = S(x+1)$.\\
As usual, $\N$ (resp. $\N\sp{*}$) denotes the set of nonnegative
integers (resp. of positive integers).\\
For $S, T \in \F_2[x]$ and $n \in \N^*$, we write: $S^n\| T$ if $S^n | T$ but $S^{n+1} \nmid T$.\\
Finally, let ${\mathcal{M}}$ denotes the set of all Mersenne primes.\\
We consider the following polynomials over $\F_2$:
$$\begin{array}{l}
M_1=1+x+x^2 = \sigma(x^2),\ M_2=1+x+x^3,\ M_3=\overline{M_2}=1+x^2+x^3,\\
M_4=1+x+x^2+x^3+x^4 = \sigma(x^4), M_5=\overline{M_4}=1+x^3+x^4,\\
S_1=1+x(x+1)M_1 = 1+x+x^4,\\
C_1 =x^3(x+1)^4M_1, C_2=x^3(x+1)^5{M_1}^2, C_3=x^4(x+1)^4{M_1}^2,\\
C_4=x^6(x+1)^6{M_1}^2,
C_5=x^4(x+1)^5{M_1}^3,  C_6=x^7(x+1)^8{M_5},\\
C_7=x^7(x+1)^9{M_5}^2,
C_8 =x^8(x+1)^8M_4M_5, C_9 =x^8(x+1)^9M_4{M_5}^2,\\
C_{10}=x^7(x+1)^{10}{M_1}^2M_5,
C_{11}=x^7(x+1)^{13}{M_2}^2{M_3}^2, \\
C_{12}=x^9(x+1)^{9}{M_4}^2{M_5}^2, C_{13}=x^{14}(x+1)^{14}{M_2}^2{M_3}^2,\\
D_{1}=x^4(x+1)^{5}{M_1}^4S_1, D_{2}=x^4(x+1)^{5}{M_1}^5{S_1}^2.\\
\\
\text{The set ${\cal{U}}:=\{M_1, \ldots, M_5\} \subset {\mathcal{M}}$ and $S_1 \not\in {\mathcal{M}}$.}
\end{array}$$
\begin{theorem} \label{caseomega3}
Let $A \in \F_2[x]$ be b.u.p. such that $\omega(A)=3$. Then \\
$A, \overline{A} \in \{C_j: j \leq 7\}$.
\end{theorem}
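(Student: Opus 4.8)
The plan is to classify all bi-unitary perfect $A$ with exactly three irreducible factors. By Lemma~\ref{nombreminimal}, any nonconstant b.u.p.\ polynomial is even, so $x(x+1)\mid A$; with $\omega(A)=3$ this forces $A = x^a(x+1)^b P^c$ for a single odd irreducible $P$ and positive exponents $a,b,c$. The master equation is the multiplicative identity $\sigma^{**}(A)=A$, which splits as
\begin{equation*}
\sigma^{**}(x^a)\,\sigma^{**}((x+1)^b)\,\sigma^{**}(P^c) = x^a(x+1)^b P^c.
\end{equation*}
First I would recall the explicit formula for $\sigma^{**}$ on a prime power: $\sigma^{**}(Q^n)=\sigma(Q^n)$ when $n$ is odd, while for even $n$ one divides out the central divisor, $\sigma^{**}(Q^n)=\sigma(Q^n)/Q^{n/2}$ (equivalently $\sigma^{**}(Q^n)=\sigma(Q^n)-Q^{n/2}$ over a field of characteristic two is not right—rather it is $\sigma(Q^{n})/Q^{n/2}$, the sum over the bi-unitary divisors). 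Substituting these into the master equation reduces everything to factorizations of the three quantities $\sigma^{**}(x^a)$, $\sigma^{**}((x+1)^b)$, $\sigma^{**}(P^c)$ whose only admissible irreducible constituents are $x$, $x+1$, and $P$.

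The key structural step is a parity/counting argument on the factor $\sigma^{**}(P^c)$. Since $P$ is odd, $\sigma(P^c)$ is even and must itself be built only from $x$, $x+1$, and $P$; but $P\nmid\sigma(P^c)$, so $\sigma^{**}(P^c)$ is a product of powers of $x$ and $x+1$ alone. This is exactly the situation governed by the earlier study of (unitary) perfect polynomials and of Mersenne primes: I would show that the only way for $\sigma(P^c)$ (or $\sigma(P^c)/P^{c/2}$) to be supported on $\{x,x+1\}$ is for $P$ to be one of the listed Mersenne primes $M_1,\dots,M_5$ (or $S_1$), with $c$ highly constrained. Concretely I would bound $\deg P$ and the exponent $c$ by comparing degrees across the master equation, then enumerate the finitely many surviving candidates. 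In parallel, $\sigma^{**}(x^a)$ and $\sigma^{**}((x+1)^b)$ must absorb the remaining $P$-power, which pins down $a,b$ in terms of $c$ via the requirement $P^c \mid \sigma^{**}(x^a)\sigma^{**}((x+1)^b)$.

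The main obstacle I expect is the case analysis forced by the two parity regimes of each exponent ($a,b,c$ odd versus even), since $\sigma^{**}$ behaves differently in each, giving up to eight combinatorial branches, several of which interact through shared factors of $x$ and $x+1$. Handling the even-exponent cases is delicate because dividing by the central square $Q^{n/2}$ can create or destroy factors of $x$ or $x+1$ on the left-hand side, so I would track these contributions carefully and use degree inequalities to eliminate the impossible branches early. After the bounding arguments cut the problem to a finite search, the remaining work is to verify directly that each surviving triple $(a,b,c,P)$ yields precisely one of $C_1,\dots,C_7$ (up to the involution $A\mapsto\overline A$), and conversely that each $C_j$ with $j\le7$ is genuinely b.u.p.; the symmetry $\overline{M_2}=M_3$, $\overline{M_4}=M_5$ lets me collapse mirror pairs and present the answer as $A,\overline A\in\{C_j:j\le7\}$.
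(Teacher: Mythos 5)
Your proposal has the right skeleton (reduce to $A=x^a(x+1)^bP^c$, use multiplicativity of $\sigma^{**}$, force $P$ into a finite list of Mersenne primes, bound the exponents, finish by finite search), but it contains two genuine errors, one of which poisons the whole computation. First, your formula for $\sigma^{**}$ at even exponents is wrong, and you explicitly reject the correct one: the bi-unitary divisors of $T^{2n}$ are all $T^k$ with $k\neq n$, so $\sigma^{**}(T^{2n})=\sigma(T^{2n})-T^{n}$ (a \emph{difference}, which in characteristic two is the sum $\sigma(T^{2n})+T^n$), and this factors as $(1+T)\,\sigma(T^n)\,\sigma(T^{n-1})$ — the identity of Lemma \ref{aboutsigmastar2} on which all of the paper's branch analysis (Corollary \ref{expressigmastar} and the relations \eqref{lessigmastarall}) rests. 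Your quotient $\sigma(T^{2n})/T^{n}$ is not even a polynomial, since $\sigma(T^{2n})\equiv 1 \pmod{T}$; for instance $\sigma^{**}(x^2)=(1+x)^2$, whereas your formula gives $(1+x+x^2)/x$. Every even-exponent branch of your case analysis, including your remark about the central square ``creating or destroying'' factors of $x$ and $x+1$, is built on this false identity.

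Second, your finiteness mechanism does not work: you propose to ``bound $\deg P$ and the exponent $c$ by comparing degrees across the master equation,'' but $\deg\sigma^{**}(S)=\deg S$ for every $S$, so the equation $\sigma^{**}(A)=A$ is degree-balanced identically and degree comparison yields a tautology. The actual finiteness input is arithmetic, not metric: since $P\nmid\sigma^{**}(P^c)$, the product $\sigma^{**}(x^a)\sigma^{**}((x+1)^b)$ cannot split (Lemma \ref{aoubsup3}), so some odd factor $\sigma(x^{2m})$ or $\sigma((x+1)^{2m})$ divides $A$; being odd and squarefree (Lemma \ref{squarefreeMers}) with $P$ the only available odd prime, it must \emph{equal} $P$, and Canaday's theorem (Lemma \ref{complete2}-vi) then forces $2m\in\{2,4\}$, i.e. $P\in\{M_1,M_4,M_5\}$ — in particular $S_1$, which is not a Mersenne prime, cannot occur when $\omega(A)=3$, so your candidate list is off. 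The same Canaday-type inputs (not degree inequalities) produce the bounds $a\le 10$, $u,v\in\{1,3,5\}$, etc., of Lemma \ref{allcasesomega=3} that make the terminal search finite. The one structural step you do get right is the paper's: $\sigma^{**}(P^c)$ divides $A$ and is prime to $P$, hence splits, which by Corollary \ref{splitcriteria} forces $P$ Mersenne and $c\in\{2\}\cup\{2^{\gamma}-1:\gamma\ge 1\}$ with $c\le\min(a,b)$; but as it stands the proposal cannot be repaired without replacing both the $\sigma^{**}$ formula and the bounding argument.
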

\begin{theorem} \label{caseomega4}
Let $A \in \F_2[x]$ be b.u.p. such that $\omega(A) =4$. Then \\
$A, \overline{A} \in \{C_j: 8 \leq j \leq 13\} \cup \{D_1, D_2\}$.
\end{theorem}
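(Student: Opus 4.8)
The plan is to follow the same factor-chasing scheme used for $\omega(A)=3$, now with one extra irreducible. By Lemma~\ref{nombreminimal} a nonconstant b.u.p. polynomial is divisible by $x(x+1)$, so I would write $A = x^a (x+1)^b P^c Q^d$ with $P, Q$ two distinct \emph{odd} irreducibles and $a,b,c,d \geq 1$. Multiplicativity turns $\sigma^{**}(A) = A$ into
\[
\sigma^{**}(x^a)\,\sigma^{**}((x+1)^b)\,\sigma^{**}(P^c)\,\sigma^{**}(Q^d) = x^a (x+1)^b P^c Q^d .
\]
The basic tool is the prime-power formula $\sigma^{**}(T^n) = \sigma(T^n)$ when $n$ is odd and $\sigma^{**}(T^{2m}) = \sigma(T^{2m}) - T^m = \sigma(T^{m-1})(1 + T^{m+1})$ when the exponent is even, together with the characteristic-$2$ identity $\sigma(T^{2m+1}) = (1+T)\,\sigma(T^m)^2$. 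These let me read off, via evaluation at $x=0$ and $x=1$, exactly when $x$ or $x+1$ divides each of the four factors: for an odd $T$ (or for $T \in \{x,x+1\}$ distinct from the point of evaluation) the divisibility is governed purely by the parity of the exponent.

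The heart of the argument is a \emph{closure} condition: since $A$ has only the four irreducible divisors $x, x+1, P, Q$, every irreducible occurring in any of the four $\sigma^{**}(P_i^{n_i})$ on the left must already lie in $\{x, x+1, P, Q\}$. I would first exploit the results of \cite{Gall-Rahav7,Gall-Rahav5,Gall-Rahav11} describing which Mersenne primes divide $\sigma(x^n)$ and $\sigma((x+1)^n)$; combined with closure this forces $P$ and $Q$ to be Mersenne primes in all but an exceptional branch, and pins down the small exponents that can actually occur (this is why $M_1,\dots,M_5$ and the relations $M_1=\sigma(x^2)$, $M_4=\sigma(x^4)$ are the only odd factors available). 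Tracking the valuations $S^n \| \sigma^{**}(\cdot)$ then converts the polynomial identity into a finite system of equations relating $a,b,c,d$ to the exponents produced on the left.

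With this machinery in place I would run the case analysis organized first by the parities of $a,b,c,d$ and then by the identities of $\{P,Q\}$ among the Mersenne primes (e.g.\ $\{M_4,M_5\}$, $\{M_1,M_5\}$, $\{M_2,M_3\}$, each appearing up to the bar operation $\overline{(\cdot)}$). In each branch the reassembly of the four $\sigma^{**}$ factors into $x^a(x+1)^b P^c Q^d$ leaves only finitely many admissible exponent vectors, which I would solve to recover $C_8,\dots,C_{13}$. Solutions that factor as a product of two coprime b.u.p. polynomials, or that collapse to $\omega \leq 3$, are discarded by appeal to Theorem~\ref{caseomega3} and the $\omega=2$ classification of \cite{BeardBiunitary}.

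I expect the main obstacle to be twofold. First, bounding the exponents and excluding infinite families: the same irreducible (say $P$) can be fed by several of the $\sigma^{**}$ factors simultaneously, so I must control the combined $P$-valuation precisely rather than factor by factor, and show that no consistent balance exists beyond the finite list given in the statement. Second, and more delicate, is the exceptional branch producing $D_1 = x^4(x+1)^5 M_1^4 S_1$ and $D_2 = x^4(x+1)^5 M_1^5 S_1^2$: here the odd factor $S_1 = 1 + x(x+1)M_1$ is \emph{not} a Mersenne prime, so the uniform Mersenne reduction does not apply and this case must be isolated and treated by hand, checking directly that $\sigma^{**}$ closes on $\{x,x+1,M_1,S_1\}$.
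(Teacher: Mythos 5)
Your overall architecture does match the paper's: multiplicativity, the prime-power formulas for $\sigma^{**}$ (your identities $\sigma^{**}(T^{2m})=\sigma(T^{m-1})(1+T^{m+1})$ and $\sigma(T^{2m+1})=(1+T)\sigma(T^m)^2$ are exactly Lemma \ref{aboutsigmastar2} and Corollary \ref{expressigmastar}), closure of the set of irreducible factors, reduction to Mersenne primes, and a finite exponent search recovering $C_8,\ldots,C_{13}$, with $D_1,D_2$ set aside as exceptional. But the point where you say the exceptional branch ``must be isolated and treated by hand'' is precisely the core of the paper's proof, and your plan supplies no mechanism to carry it out. You are missing, first, the normalization $\deg(P)\le\deg(Q)$: it is what forces $1+P$ to split (any odd irreducible factor of $1+P$ would have to be $Q$, of degree $<\deg(P)$, a contradiction), hence $P\in{\mathcal{M}}$ \emph{unconditionally} --- not merely ``in all but an exceptional branch.'' Second, the same ordering yields the key constraint $d\in\{2,2^{\delta}-1\}$ (if $d=4r$ or $4r+2$ then the odd squarefree $\sigma(Q^{2r})$ divides $A$, forcing $P=\sigma(Q^{2r})$ of too-large degree; similarly for $d=2^{\delta}w-1$ with $w\ge 3$), whence $\sigma^{**}(Q^d)=(1+Q)^d$ and $1+Q=x^{u_2}(x+1)^{v_2}P^{w_2}$ (Lemma \ref{valuesofd}). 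That structural description of a non-Mersenne $Q$ is what your closure condition alone does not produce.

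Moreover, the exceptional branch is not a single case. Since $Q\nmid\sigma^{**}(Q^d)$, the factor $Q$ must be fed by one of the other three $\sigma^{**}$ factors, and squarefreeness (Lemma \ref{squarefreeMers}) leaves, up to the bar operation, exactly three possibilities: $PQ=\sigma(x^{2m})$, $Q=\sigma(x^{2m})$, $Q=\sigma(P^{2m})$. The first two produce concrete non-Mersenne candidates your sketch does not anticipate --- $Q=1+x^3+x^6$ (with $P=M_1$), $Q=1+x^5(x^5+1)^3$ (with $P=M_4$), and $Q=1+x(x+1)^3M_1^4$, $Q=1+x(x+1)M_4^2$ --- and excluding them is not a routine check: it requires the irreducibility lemmas descending from Canaday's Lemma 4 (Lemma \ref{Canadaypage726}: $1+x(x+1)^{2^{\nu}-1}$ is irreducible iff $\nu\in\{1,2\}$) together with the exponent bookkeeping of Corollaries \ref{synthese4} and \ref{synthese5} before the search becomes finite and is found empty. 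Only the third case survives, with $P=M_1$ and $Q\in\{1+x(x+1)M_1,\ 1+x^3(x+1)^3M_1\}$ (Lemma \ref{factorifQ=sigmP2m} and Corollary \ref{synthese3}), yielding $D_1,D_2,\overline{D}_1,\overline{D}_2$. As written, your plan could verify that $D_1,D_2$ are b.u.p.\ once guessed, but it cannot prove that no other non-Mersenne $Q$ occurs, so the classification is not established. (A minor point: the divisibility facts you attribute to \cite{Gall-Rahav7}, \cite{Gall-Rahav5}, \cite{Gall-Rahav11} are really Canaday's, collected in Lemma \ref{complete2}.)
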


\section{Preliminaries}\label{preliminaire}
We need the following results. Some of them are obvious or (well) known, so we omit
their proofs.
\begin{lemma} \label{gcdunitary}
Let $T$ be an irreducible polynomial over $\F_2$ and $k,l\in \N^*$.
Then,  $\gcd_u(T^k,T^l) = 1 \ ($resp. $T^k)$ if $k \not= l \ ($resp. $k=l)$.\\
In particular, $\gcd_u(T^k,T^{2n-k}) = 1$ for $k \not=n$,  $\gcd_u(T^k,T^{2n+1-k}) = 1$ for
any $0\leq k \leq 2n+1$.
\end{lemma}
\begin{lemma} \label{aboutsigmastar2}
Let $T \in \F_2[x]$ be irreducible. Then\\
i) $\sigma^{**}(T^{2n}) = (1+T)\sigma(T^n) \sigma(T^{n-1}), \
\sigma^{**}(T^{2n+1}) = \sigma(T^{2n+1})$.\\
ii) For any $c \in \N$, $T$ does not divide $\sigma^{**}(T^{c})$.
\end{lemma}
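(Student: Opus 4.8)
The plan is to reduce everything to an explicit description of the bi-unitary divisors of a prime power $T^c$, and then finish by direct computation in $\F_2[x]$. Since the divisors of $T^c$ are exactly $T^0, T^1, \dots, T^c$, the divisor $T^k$ is bi-unitary precisely when $\gcd_u(T^k, T^{c-k}) = 1$. The endpoints $k=0$ and $k=c$ always give bi-unitary divisors, because a constant is a unitary divisor of anything; and for $1 \le k \le c-1$, Lemma~\ref{gcdunitary} says that $\gcd_u(T^k, T^{c-k}) = 1$ if and only if $k \ne c-k$. Hence the unique non-bi-unitary divisor occurs exactly when $c$ is even and $k = c/2$.

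First I would treat the odd case $c = 2n+1$. Here $2k = c$ has no integer solution, so every $T^k$ is bi-unitary, and summing gives $\sigma^{**}(T^{2n+1}) = \sum_{k=0}^{2n+1} T^k = \sigma(T^{2n+1})$ at once. Next, for the even case $c = 2n$ (with $n \ge 1$), only $T^n$ fails to be bi-unitary, so $\sigma^{**}(T^{2n}) = \sigma(T^{2n}) - T^n$. It then remains to verify the product form $(1+T)\sigma(T^n)\sigma(T^{n-1}) = \sigma(T^{2n}) - T^n$. I would do this with the geometric-series identity $(1+T)\sigma(T^{n-1}) = T^n + 1$, which is valid in characteristic $2$ since $T-1 = T+1$ there, reducing the claim to $(T^n+1)\sigma(T^n) = \sigma(T^{2n}) - T^n$. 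Expanding the left-hand side produces the two terms $\sum_{k=0}^{n} T^k$ and $\sum_{k=n}^{2n} T^k$, whose two copies of $T^n$ cancel in $\F_2[x]$, leaving exactly $\sum_{k \ne n} T^k$. This characteristic-$2$ cancellation of the middle term is the one slightly delicate point, but it is purely mechanical.

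Finally, part (ii) follows with essentially no further work: in every case the expression for $\sigma^{**}(T^c)$ is a sum of powers of $T$ that includes the term $T^0 = 1$, since the divisor $D = 1$ is always bi-unitary, so its constant term equals $1$. As $T$ is irreducible and hence nonconstant, $T$ cannot divide a polynomial whose constant term is nonzero, which yields $T \nmid \sigma^{**}(T^c)$. Alternatively, one can invoke primality of $T$ together with the factorization from (i), observing that none of the factors $1+T$, $\sigma(T^n)$, $\sigma(T^{n-1})$ is divisible by $T$, and that $\sigma(T^{2n+1})$ has constant term $1$.

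I expect no genuine obstacle here: the whole statement is computational, and the only substantive content is the correct bookkeeping of which single exponent is excluded in the even case, together with the characteristic-$2$ identity rewriting $\sigma(T^{2n}) - T^n$ as a product.
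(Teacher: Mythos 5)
Your proof is correct and takes essentially the same route as the paper: both identify the bi-unitary divisors of $T^c$ via Lemma~\ref{gcdunitary} (every $T^k$ except the middle power, which is excluded exactly when $c$ is even), sum them, and verify the factorization by the characteristic-$2$ geometric-series identity --- the paper groups the sum as $(1+T^{n+1})\sigma(T^{n-1})=(1+T)\sigma(T^n)\sigma(T^{n-1})$ while you pass through $(1+T^n)\sigma(T^n)$, and for ii) the paper's ``follows from i)'' matches your alternative argument via the factors. One caveat on phrasing: the claim ``$T$ cannot divide a polynomial whose constant term is nonzero'' is false if ``constant term'' is read in the variable $x$ (over $\F_2$, $x+1$ divides $x^2+1$, which has constant term $1$); what you mean, and what is correct, is that $\sigma^{**}(T^c)\equiv 1 \pmod{T}$ because it is a sum of powers of $T$ containing $T^0$, and $T$, being nonconstant, does not divide $1$.
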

\begin{proof}
i): $\sigma^{**}(T^{2n}) = 1+T+\cdots+T^{n-1} + T^{n+1}+\cdots+T^{2n} =
(1+T^{n+1})\sigma(T^{n-1}) = (1+T)\sigma(T^n) \sigma(T^{n-1})$,
$\sigma^{**}(T^{2n+1}) = 1+T+\cdots+T^{2n+1}$.\\
ii) follows from i).
\end{proof}
\begin{corollary} \label{expressigmastar} Let $T \in \F_2[x]$ be irreducible. Then\\
i) If $a \in \{4r,4r+2\}$, where $2r-1$ or $2r+1$ is of the form $2^{\alpha}u-1$, $u$ odd, then
$\sigma^{**}(T^a)=(1+T)^{2^{\alpha}} \cdot \sigma(T^{2r}) \cdot (\sigma(T^{u-1}))^{2^{\alpha}},
\ \gcd(\sigma(T^{2r}), \sigma(T^{u-1}))=1.$\\
ii) If $a=2^{\alpha}u-1$ is odd, with $u$ odd, then
$\sigma^{**}(T^a)=(1+T)^{2^{\alpha}-1} \cdot (\sigma(T^{u-1}))^{2^{\alpha}}$.
\end{corollary}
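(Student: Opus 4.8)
The plan is to prove Corollary~\ref{expressigmastar} by reducing each case to the formula in Lemma~\ref{aboutsigmastar2} and then exploiting the characteristic-$2$ identity $\sigma(T^{2m+1}) = (1+T)\,(\sigma(T^m))^2$ together with the factorization of $\sigma(T^{n})$ along the $2$-adic decomposition of $n+1$. Recall that over $\F_2$ one has $1+T^{2^\alpha} = (1+T)^{2^\alpha}$, and more generally $\sigma(T^{2^\alpha k - 1}) = (1+T^{2^\alpha k})/(1+T) = (1+(T^k)^{2^\alpha})/(1+T) = (1+T^k)^{2^\alpha}/(1+T) = (1+T)^{2^\alpha-1}(\sigma(T^{k-1}))^{2^\alpha}$. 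This single identity is the engine behind both parts.

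For part~ii), where $a = 2^\alpha u - 1$ is odd with $u$ odd, Lemma~\ref{aboutsigmastar2}(i) gives $\sigma^{**}(T^a) = \sigma(T^a)$ immediately, and then the displayed identity with $k = u$ yields $\sigma^{**}(T^a) = \sigma(T^{2^\alpha u - 1}) = (1+T)^{2^\alpha - 1}(\sigma(T^{u-1}))^{2^\alpha}$, which is exactly the claim. So part~ii) is essentially a one-line consequence of the factorization identity.

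For part~i), where $a \in \{4r, 4r+2\}$ is even, I would write $a = 2n$ and apply Lemma~\ref{aboutsigmastar2}(i): $\sigma^{**}(T^{2n}) = (1+T)\,\sigma(T^n)\,\sigma(T^{n-1})$. The two subcases $a = 4r$ (so $n = 2r$, $n-1 = 2r-1$) and $a = 4r+2$ (so $n = 2r+1$, $n-1 = 2r$) are symmetric: in each case one of the two exponents $n, n-1$ is the even number $2r$ and the other is an odd number of the form $2r\mp 1 = 2^\alpha u - 1$. For the odd one I apply the factorization identity to get $\sigma(T^{2^\alpha u - 1}) = (1+T)^{2^\alpha - 1}(\sigma(T^{u-1}))^{2^\alpha}$; multiplying by the leading $(1+T)$ from the lemma produces the stated $(1+T)^{2^\alpha}$, while $\sigma(T^{2r})$ is carried along unchanged. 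This gives the formula $\sigma^{**}(T^a) = (1+T)^{2^\alpha}\sigma(T^{2r})(\sigma(T^{u-1}))^{2^\alpha}$.

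The main obstacle, and the only genuinely nonroutine point, is the coprimality claim $\gcd(\sigma(T^{2r}), \sigma(T^{u-1})) = 1$. Here I would argue that any common irreducible factor $P$ would have to divide a combination forcing its roots to be simultaneous roots of $1+T^{2r+1}$ and $1+T^{u}$ (the numerators, up to the $(1+T)$ denominators), hence roots of $\gcd(1+T^{2r+1}, 1+T^{u}) = 1 + T^{\gcd(2r+1, u)}$; since $u$ is odd and $u \pm 1 = 2r$ forces $\gcd(2r+1, u) = \gcd(u\mp 1 \mp 1, u)$ to collapse to $1$ in the relevant subcase, the two polynomials share only the trivial factor, and removing the common $(1+T)$ divisor handled separately then gives coprimality of the quotients $\sigma(T^{2r})$ and $\sigma(T^{u-1})$. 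I expect the careful bookkeeping of which of $2r-1$, $2r+1$ is the one written as $2^\alpha u - 1$, and verifying the gcd computation in each of the two parity subcases, to be the part requiring the most attention.
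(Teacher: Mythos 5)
Your overall route is the intended one: the paper gives no separate proof of Corollary \ref{expressigmastar} (it is among the statements whose proofs are omitted as known), and the natural derivation is exactly what you propose --- combine Lemma \ref{aboutsigmastar2}-i) with the characteristic-$2$ identity $\sigma(T^{2^{\alpha}k-1})=(1+T)^{2^{\alpha}-1}\,(\sigma(T^{k-1}))^{2^{\alpha}}$, applied to the odd one of the two exponents $n$, $n-1$ in each subcase $a=4r$, $a=4r+2$. Your part ii) and the displayed formula of part i) are correct as written.

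The one defective step is the arithmetic in your coprimality paragraph: the relation ``$u\pm 1=2r$'' is false in general, so the gcd computation as you wrote it would not go through. The correct bookkeeping is: for $a=4r$ the hypothesis reads $2r-1=2^{\alpha}u-1$, i.e. $2^{\alpha}u=2r$, so $u\mid 2r$ and hence $\gcd(2r+1,u)=1$; for $a=4r+2$ it reads $2^{\alpha}u=2r+2$, so any common divisor of $2r+1$ and $u$ divides $(2r+2)-(2r+1)=1$. In both subcases one then gets $\gcd(1+T^{2r+1},1+T^{u})=1+T^{\gcd(2r+1,u)}=1+T$, and the residual factor $1+T$ must be excluded explicitly, which your sketch only gestures at: $\sigma(T^{2r})$ has an odd number of terms, hence is congruent to $1$ modulo $1+T$ (equivalently, a common root $\theta\in\overline{\F_2}$ would force $T(\theta)=1$, at which $\sigma(T^{2r})$ takes the value $2r+1=1\neq 0$ in $\F_2$). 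Note also that $u$ odd together with $2^{\alpha}u=2r$ (resp. $2r+2$) even gives $\alpha\geq 1$ automatically, as the statement requires. With these corrections substituted for the faulty relation, your argument closes and coincides with the standard (omitted) proof.
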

\begin{corollary} \label{splitcriteria}
i) The polynomial $\sigma^{**}(x^{a})$ splits over $\F_2$ if and only if $a=2$ or $a=2^{\alpha}-1$, for some $\alpha \in \N^*$.\\
ii) Let $T \in \F_2[x]$ be odd and irreducible. Then, $\sigma^{**}(T^{c})$ splits over $\F_2$ if and only if
$(T$ is Mersenne, $c=2$ or $c=2^{\gamma}-1$ for some $\gamma \in \N^*).$\\
iii) Let $T \in \F_2[x]$ be odd and irreducible such that $\sigma^{**}(T^{c})$ does not split over $\F_2$, then $\sigma^{**}(T^{c})$ is divisible by $\sigma(T^{2m})$, for some $m \geq 1$.
\end{corollary}

\begin{lemma} \label{nombreminimal}
If $A$ is a nonconstant b.u.p.
polynomial over $\F_2$, then $x(x+1)$ divides $A$ so that $\omega(A) \geq 2$.
\end{lemma}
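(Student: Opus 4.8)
The plan is to prove the two divisibilities $x \mid A$ and $(x+1) \mid A$ separately, by evaluating the identity $\sigma^{**}(A)=A$ at the two points of $\F_2$ and exploiting multiplicativity of $\sigma^{**}$. Write $A=\prod_i P_i^{a_i}$ with the $P_i$ distinct and irreducible, so that $\sigma^{**}(A)=\prod_i\sigma^{**}(P_i^{a_i})$.

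The key observation is that for any irreducible $Q$ with $Q(0)=1$ and any exponent $c\ge 1$ one has $x \mid \sigma^{**}(Q^c)$, that is, $\sigma^{**}(Q^c)(0)=0$ in $\F_2$. This is read off directly from Lemma \ref{aboutsigmastar2}: if $c=2n+1$, then $\sigma^{**}(Q^c)=\sigma(Q^{2n+1})=1+Q+\cdots+Q^{2n+1}$ is a sum of $2n+2$ terms, each equal to $1$ at $x=0$, so its value there is $0$; if $c=2n$ with $n\ge 1$, then $\sigma^{**}(Q^c)=(1+Q)\sigma(Q^n)\sigma(Q^{n-1})$ carries the factor $(1+Q)$, which vanishes at $x=0$. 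Conceptually, the number of bi-unitary divisors of $Q^c$ is even for every $c\ge 1$, and each such divisor takes the value $1$ at a point where $Q=1$.

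I first rule out the pure power $A=x^a$. If $A=x^a$ with $a\ge 1$, then $A=\sigma^{**}(A)=\sigma^{**}(x^a)$, but $x\nmid\sigma^{**}(x^a)$ by Lemma \ref{aboutsigmastar2}(ii), contradicting $x\mid x^a$. Hence a nonconstant b.u.p. $A$ must have at least one irreducible factor $Q\neq x$; since the only irreducible with a zero constant term is $x$ itself, such a $Q$ satisfies $Q(0)=1$, and the key observation gives $x\mid\sigma^{**}(Q^{a})\mid\sigma^{**}(A)=A$, hence $x\mid A$. Repeating the argument at the point $x=1$ — where now $(x+1)(1)=0$, every other irreducible $Q'$ has $Q'(1)=1$, and Lemma \ref{aboutsigmastar2}(ii) rules out $A=(x+1)^b$ — yields $(x+1)\mid A$ by the symmetric computation.

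Combining the two divisibilities gives $x(x+1)\mid A$, whence $\omega(A)\ge 2$. The only point that needs care is the pure-power case, handled by Lemma \ref{aboutsigmastar2}(ii); the remainder is a direct parity and evaluation computation over $\F_2$, so I do not expect a genuine obstacle.
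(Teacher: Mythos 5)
Your proof is correct. The paper itself gives no proof of Lemma \ref{nombreminimal} --- it is among the statements the author declares ``obvious or (well) known'' and omits --- so there is no in-paper argument to diverge from; your route is the natural one and it is complete. Every step checks out: evaluation at $x=0$ and $x=1$ combined with multiplicativity of $\sigma^{**}$; the parity observation that $\sigma^{**}(Q^c)$ vanishes at any point where $Q$ takes the value $1$ (for odd $c$ it is a sum of an even number of terms each equal to $1$ there, for even $c\geq 2$ it carries the factor $1+Q$, exactly as Lemma \ref{aboutsigmastar2}-i) shows); the fact that the only irreducibles over $\F_2$ vanishing at $0$ (resp.\ at $1$) are $x$ (resp.\ $x+1$), so any other irreducible factor of $A$ forces $x\mid A$ (resp.\ $(x+1)\mid A$); and the disposal of the pure powers $A=x^a$ and $A=(x+1)^b$ via Lemma \ref{aboutsigmastar2}-ii), which makes your case split exhaustive. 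One small streamlining you might note: the pure-power case need not be isolated --- if $x\nmid A$ then \emph{every} irreducible factor $Q$ of the nonconstant $A$ satisfies $Q(0)=1$, so $\sigma^{**}(A)(0)=0\neq 1=A(0)$, a contradiction outright --- but your version is equally valid.
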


\begin{lemma} \label{multiplicativity}
If $A = A_1A_2$ is b.u.p. over $\F_2$ and if
$\gcd(A_1,A_2) =~1$, then $A_1$ is b.u.p. if and only if
$A_2$ is b.u.p.
\end{lemma}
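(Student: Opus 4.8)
The plan is to exploit the multiplicativity of $\sigma^{**}$, which is recorded in the introduction. Since $A = A_1A_2$ with $\gcd(A_1,A_2)=1$, multiplicativity gives immediately
\[
\sigma^{**}(A) = \sigma^{**}(A_1)\,\sigma^{**}(A_2).
\]
The hypothesis that $A$ is b.u.p. means $\sigma^{**}(A) = A = A_1A_2$, so the whole content of the lemma is packaged in the single identity
\[
\sigma^{**}(A_1)\,\sigma^{**}(A_2) = A_1A_2. \tag{$\ast$}
\]
Everything else is an algebraic manipulation of $(\ast)$ inside the polynomial ring $\F_2[x]$.

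Next I would prove the two implications, which are perfectly symmetric. Suppose $A_1$ is b.u.p., i.e. $\sigma^{**}(A_1) = A_1$. Substituting this into $(\ast)$ yields $A_1\,\sigma^{**}(A_2) = A_1A_2$. Since $A_1$ is a nonzero divisor of the nonzero polynomial $A$ and $\F_2[x]$ is an integral domain, I may cancel $A_1$ to obtain $\sigma^{**}(A_2) = A_2$; that is, $A_2$ is b.u.p. Interchanging the roles of $A_1$ and $A_2$ gives the converse, and this establishes the equivalence.

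There is essentially no obstacle here: the only point requiring a word of justification is that the cancellation of $A_1$ (and of $A_2$ in the reverse direction) is legitimate, and this is guaranteed because $A_1,A_2$ are nonzero — being divisors of the nonzero polynomial $A$ — and $\F_2[x]$ has no zero divisors. I would therefore keep the argument short, emphasizing only the reduction to $(\ast)$ via multiplicativity and the cancellation step; no degree count, coprimality of the $\sigma^{**}(A_i)$, or structural information about bi-unitary divisors is needed.
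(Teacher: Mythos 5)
Your proof is correct: the multiplicativity of $\sigma^{**}$ together with cancellation in the integral domain $\F_2[x]$ is exactly the intended argument, which the paper omits as one of the results deemed ``obvious or (well) known.'' Your attention to the legitimacy of cancelling the nonzero factor $A_1$ is the only point of substance, and you handle it properly.
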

\begin{lemma} \label{translation}
If $A$ is b.u.p. over $\F_2$, then the polynomial $\overline{A}$
is also b.u.p. over~$\F_2$.
\end{lemma}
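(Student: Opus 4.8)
The plan is to show that the substitution $x \mapsto x+1$, i.e.\ the map $\phi\colon S \mapsto \overline{S}$, is an $\F_2$-algebra automorphism of $\F_2[x]$ under which $\sigma^{**}$ is equivariant, namely $\sigma^{**}(\overline{S}) = \overline{\sigma^{**}(S)}$ for every nonzero $S$. Granting this, if $A$ is b.u.p.\ then $\sigma^{**}(A)=A$, and applying $\phi$ gives $\sigma^{**}(\overline{A}) = \overline{\sigma^{**}(A)} = \overline{A}$, so $\overline{A}$ is b.u.p.

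First I would record the basic properties of $\phi$. Since $\phi$ replaces $x$ by $x+1$ and $(x+1)+1 = x$ in characteristic $2$, $\phi$ is a ring involution; in particular it is bijective, preserves degrees, sends irreducibles to irreducibles, and commutes with $\gcd$, that is $\gcd(\overline{S},\overline{T}) = \overline{\gcd(S,T)}$. Consequently $\phi$ carries the factorization of $S$ into prime powers to the factorization of $\overline{S}$, and it preserves coprimality; hence it respects bi-unitary divisibility, meaning that $D$ is a bi-unitary divisor of $S$ if and only if $\overline{D}$ is a bi-unitary divisor of $\overline{S}$.

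Since $\sigma^{**}$ is multiplicative and $\phi$ respects the coprime prime-power factorization, it suffices to prove the equivariance on a single prime power $T^c$, where $\overline{T^c} = \overline{T}^{\,c}$. By Lemma \ref{gcdunitary} the exponents $j$ for which $T^j$ is a bi-unitary divisor of $T^c$ form a set depending only on $c$ (all $j$ when $c$ is odd; all $j \neq c/2$ when $c$ is even), and this same set describes the bi-unitary divisors of $\overline{T}^{\,c}$. Summing over this common index set and using that $\phi$ is additive, $\overline{\sigma^{**}(T^c)} = \sum_j \overline{T}^{\,j} = \sigma^{**}(\overline{T}^{\,c})$, which is the required identity.

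The argument is essentially formal once the equivariance of $\sigma^{**}$ under $\phi$ is in hand, so I do not expect a serious obstacle; the only point requiring care is to confirm that the combinatorial description of the bi-unitary divisor exponents is genuinely independent of the irreducible $T$, which is exactly what Lemma \ref{gcdunitary} provides.
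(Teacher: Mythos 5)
Your proof is correct and complete: the map $S \mapsto \overline{S}$ is indeed an $\F_2$-algebra involution preserving degrees, irreducibility, $\gcd$ and hence $\gcd_u$, and your prime-power verification via Lemma \ref{gcdunitary} (all exponents $j$ when $c$ is odd, all $j \neq c/2$ when $c$ is even) correctly yields the equivariance $\sigma^{**}(\overline{A}) = \overline{\sigma^{**}(A)}$. The paper omits the proof of this lemma as obvious, and your argument is precisely the standard justification it has in mind.
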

Lemma \ref{complete2} below gives some useful results from Canaday's paper
(\cite{Canaday}, Lemmas 4, 5, 6, Theorem 8 and Corollary on page 728).
\begin{lemma} \label{complete2}
Let $P, Q \in \F_2[x]$ be such that $P$ is irreducible and let $n,m \in \N$.\\
\emph{i)} If $\sigma(P^{2n}) = Q^m$, then $m
\in \{0,1\}$.\\
\emph{ii)} If $\sigma(P^{2n}) = Q^m T$, with $m > 1$ and $T \in
\F_2[x]$ is nonconstant, then ${\rm{deg}}(P)
> {\rm{deg}}(Q)$.\\
\emph{iii)} If $P$ is a Mersenne prime and if $P=P^*$, then $P \in \{M_1, M_4\}$.\\
\emph{iv)} If $\sigma(x^{2n}) = PQ$ and $P = \sigma((x+1)^{2m})$, then $2n=8$, $2m=2$, $P= M_1$ and
$Q = P(x^3) = 1+x^3+x^6$.\\
\emph{v)} If any irreducible factor of $\sigma(x^{2n})$ is a Mersenne prime, then $2n \leq 6$.\\
\emph{vi)} If $\sigma(x^{2n})$ is a Mersenne prime, then $2n \in \{2,4\}$.\\
\emph{vii)} If $\sigma(x^{n}) = \sigma((x+1)^{n})$,
then $n= 2^h-2$, for some $h \in \N^*$.
\end{lemma}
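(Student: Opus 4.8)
The plan is to reduce every item to manipulations of $\sigma(P^k)=1+P+\cdots+P^k$ over $\F_2$, using four recurring tools: the degree count $\deg\sigma(P^k)=k\deg P$; the Frobenius/derivative identity, which over $\F_2$ gives $(\sigma(P^{2n}))'=P'\,(\sigma(P^{n-1}))^2$ together with the facts that $F'=0$ iff $F$ is a square and that $P'\neq0$ for any nonconstant irreducible $P$; the coprimality $\gcd(\sigma(P^{2n}),\sigma(P^{n-1}))=1$, which follows from $\sigma(P^{2n})=\sigma(P^{n-1})+P^{n}\sigma(P^{n})$ and $\sigma(P^{n})=\sigma(P^{n-1})+P^{n}$; and Lucas' theorem governing the parity of binomial coefficients.

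For (i) and (ii) I would argue by differentiation. Writing $\sigma(P^{2n})=Q^mT$ and differentiating, the term $mQ^{m-1}Q'T$ survives only when $m$ is odd, so in all cases $Q^{m-1}$ divides $(\sigma(P^{2n}))'=P'(\sigma(P^{n-1}))^2$. Since $Q\mid\sigma(P^{2n})$ is coprime to $\sigma(P^{n-1})$, one gets $Q^{m-1}\mid P'$, whence $(m-1)\deg Q\le\deg P'<\deg P$. For (ii), with $m\geq2$, this already forces $\deg P>\deg Q$. For (i), where $T=1$, the same divisibility with $m\geq2$ (treating $m$ even via the square/derivative obstruction, and $m$ odd via the resulting degree bound $(m-1)\cdot 2n<m$) is impossible once $n\geq1$, leaving $m\in\{0,1\}$.

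Items (vi), (v), (vii) I would attack through the factorization $\sigma(x^{2n-1})=(1+x)(\sigma(x^{n-1}))^2$. For (vi), $\sigma(x^{2n})=1+x\,\sigma(x^{2n-1})$ is of Mersenne shape only when $\sigma(x^{2n-1})$ is a power of $x+1$, which by descent forces $2n$ to be a power of $2$; the cyclotomic factorization of $x^{2n+1}+1$ then shows irreducibility fails for $2n\ge8$ (e.g.\ $\sigma(x^8)=M_1\cdot(1+x^3+x^6)$), leaving $2n\in\{2,4\}$. Part (v) is the corresponding finiteness statement relaxed to ``all factors Mersenne,'' handled by the same cyclotomic and degree analysis. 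For (vii), cross-multiplying $\sigma(x^n)=\sigma((x+1)^n)$ collapses to $x^{n+2}+(x+1)^{n+2}=1$, and upon expanding by the binomial theorem, Lucas' theorem makes all intermediate coefficients vanish exactly when $n+2$ is a power of $2$.

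The remaining items (iii) and (iv), together with the finiteness in (v), are where I expect the real work. For (iii) I would write a self-reciprocal Mersenne prime as $1+x^a(x+1)^b$ and use $\deg=a+b$ together with the reciprocal identity to pin down $(a,b)$, reducing to the solutions giving $M_1$ and $M_4$. Part (iv) rests on the specific two-factor splitting $\sigma(x^8)=\sigma((x+1)^2)\cdot(1+x^3+x^6)$ surfaced in (vi), and the hard part is proving \emph{uniqueness}: that no other $2n$ yields a two-term product with one factor of the form $\sigma((x+1)^{2m})$. This, and the claim in (v) that large $n$ always produces a non-Mersenne factor, require careful degree bookkeeping and control of the interaction between the substitution $x\mapsto x+1$ and the reciprocal, which is the main obstacle to a short self-contained treatment.
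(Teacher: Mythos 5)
The paper contains no proof of this lemma to compare against: it is imported wholesale from Canaday (\cite{Canaday}, Lemmas 4, 5, 6, Theorem 8 and the Corollary on page 728), under the blanket remark that such results are ``(well) known''. So your proposal must be measured against Canaday's original arguments. On that score, items (i), (ii), (vi), (vii) of your reconstruction are correct and essentially Canaday's own devices: over $\F_2$ one indeed has $(\sigma(P^{2n}))'=P'(\sigma(P^{n-1}))^2\neq 0$ and $\gcd(\sigma(P^{2n}),\sigma(P^{n-1}))=1$ (from $\sigma(P^{2n})=\sigma(P^{n-1})+P^n\sigma(P^n)$), so $Q^{m-1}\mid P'$ and your degree counts $(m-1)\deg Q<\deg P$ and $(m-1)2n<m$ close (i) and (ii); the reduction of (vii) to $x^{n+2}+(x+1)^{n+2}=1$ plus Lucas is complete; and for (vi), Mersenne shape forces $\sigma(x^{2n-1})=(x+1)^{2n-1}$, hence $2n=2^k$, after which your cyclotomic argument works because when $2^k+1$ is prime the order of $2$ modulo $2^k+1$ is $2k<2^k$ for $k\geq 3$, so $\Phi_{2^k+1}$ is reducible over $\F_2$. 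Your plan for (iii) also genuinely closes: $P=P^*$ is equivalent to $(1+x)^b(1+x^a)=1+x^{a+b}$, and $\gcd(a,b)=1$ forces $a$ odd and $b=2^w-1$, after which comparing odd parts gives $\sigma(x^{a-1})=(\sigma(x^{c'-1}))^{2^w}$; squarefreeness of $\sigma(x^{a-1})$ (the paper's Lemma \ref{squarefreeMers}) kills $c'\geq 3$, and the case $c'=1$ reduces to the irreducibility criterion for $1+x(x+1)^{2^{\nu}-1}$ (Lemma \ref{Canadaypage726}, again Canaday), yielding exactly $M_1$ and $M_4$.

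The genuine gap is where you yourself flag it: (iv) and (v) are asserted, not proved, and they are precisely the substantive content being cited (Canaday's Theorem 8 and its corollary). In particular, ``handled by the same cyclotomic and degree analysis'' will not deliver (v): in (vi) the irreducibility hypothesis pinned $2n+1$ down to prime values of a very special form, but once several Mersenne factors are allowed --- e.g.\ $\sigma(x^6)=M_2M_3$ --- the divisor structure of $2n+1$ is a priori unconstrained, and one must show that for every $2n>6$ some cyclotomic factor $\Phi_d$, $d\mid 2n+1$, acquires a non-Mersenne irreducible factor. Canaday proves this by an induction through a chain of lemmas, comparing the factorization of $\sigma(x^{2n})$ with its images under $x\mapsto x+1$ and under reciprocation and using coefficient identities of the kind you deployed in (vii); nothing in your sketch substitutes for that machinery, and the same machinery underlies the uniqueness assertion in (iv) (that $2n=8$ is the \emph{only} case of a splitting $\sigma(x^{2n})=\sigma((x+1)^{2m})\cdot Q$). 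So, as a blind reconstruction: complete and faithful to Canaday on (i), (ii), (vi), (vii), essentially complete on (iii), and open exactly on the two items whose proofs the paper itself outsources to \cite{Canaday}.
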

\begin{lemma} \label{squarefreeMers} {\rm{[see \cite{Gall-Rahav13}, Lemma 2.6]}} Let $m \in \N^*$ and $T$ be a Mersenne prime. Then, $\sigma(x^{2m})$, $\sigma((x+1)^{2m})$ and $\sigma(M^{2m})$ are all odd and squarefree.
\end{lemma}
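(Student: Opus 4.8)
The plan is to treat oddness and squarefreeness separately, and to split the squarefreeness into the two ``base'' cases $\sigma(x^{2m})$, $\sigma((x+1)^{2m})$ and the genuinely new case $\sigma(T^{2m})$ with $T$ a Mersenne prime (of degree at least $2$).

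First I would dispose of oddness by evaluating at the roots $0$ and $1$ of $x(x+1)$. For a Mersenne prime $T=1+x^a(x+1)^b$ one has $a,b\ge 1$, so $x^a(x+1)^b$ vanishes at $0$ and at $1$ and hence $T(0)=T(1)=1$; consequently $\sigma(T^{2m})(0)=\sigma(T^{2m})(1)=\sum_{k=0}^{2m}1=2m+1=1$ in $\F_2$, so neither $x$ nor $x+1$ divides $\sigma(T^{2m})$. The values of $\sigma(x^{2m})$ and $\sigma((x+1)^{2m})$ at $0$ and $1$ are likewise $1$, so all three polynomials are odd.

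For squarefreeness of $\sigma(x^{2m})$ I would use $\sigma(x^{2m})=(x^{2m+1}+1)/(x+1)$. Since $2m+1$ is odd, the derivative of $x^{2m+1}+1$ is $x^{2m}$, which is coprime to $x^{2m+1}+1$ (the latter has nonzero constant term); thus $x^{2m+1}+1$ is squarefree and so is its divisor $\sigma(x^{2m})$. Applying the automorphism $x\mapsto x+1$ of $\F_2[x]$, under which $\sigma((x+1)^{2m})$ is the image of $\sigma(x^{2m})$, settles the second case.

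The main work, and the step I expect to be the obstacle, is $\sigma(T^{2m})$ for a Mersenne prime $T$, where squarefreeness must be transferred across the substitution $T\mapsto T(x)$. Writing $f=\sigma(T^{2m})$ and starting from $f\cdot(T+1)=T^{2m+1}+1$, I would differentiate with respect to $x$ and use $2m+1\equiv 1$ to obtain, after simplification,
\[
f'=T'\cdot\sigma(T^{m-1})^2 .
\]
It then suffices to show $\gcd(f,f')=1$, i.e. that $f$ is coprime to both $T'$ and $\sigma(T^{m-1})$. For the first, I would compute $T'=x^{a-1}(x+1)^{b-1}\big((a+b)x+a\big)$ and note that, after reduction of the coefficients modulo $2$, the factor $(a+b)x+a$ equals $x$, $x+1$, or a nonzero constant; hence the only irreducible factors of $T'$ are $x$ and $x+1$, neither of which divides the odd polynomial $f$. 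For the second, in $\F_2[T]$ the polynomials $\sigma(T^{2m})=(T^{2m+1}+1)/(T+1)$ and $\sigma(T^{m-1})=(T^m+1)/(T+1)$ have no common root because $\gcd(2m+1,m)=1$, so they are coprime in $\F_2[T]$; substituting $T=T(x)$ into a Bézout identity transfers this coprimality to $\F_2[x]$. Combining the two coprimalities yields $\gcd(f,f')=1$, so $f$ is squarefree. The delicate point is precisely this transfer of coprimality and separability from $\F_2[T]$ to $\F_2[x]$ under $T\mapsto T(x)$, which is why I route the argument through the explicit derivative identity rather than through an abstract factorisation of $\sigma(T^{2m})$ over $\F_2[T]$.
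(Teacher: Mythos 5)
Your proof is correct, but note that there is nothing internal to compare it against: the paper omits the proof of this lemma entirely, quoting it from \cite{Gall-Rahav13} (Lemma 2.6). Your argument is a valid, self-contained reconstruction, and it follows the route standard in that literature: oddness by evaluation at $0$ and $1$ (using $T(0)=T(1)=1$ and $2m+1\equiv 1 \pmod 2$), squarefreeness of $\sigma(x^{2m})$ from the derivative $x^{2m}$ of $x^{2m+1}+1$, and squarefreeness of $\sigma(T^{2m})$ via the derivative identity together with the key observation that $T'=x^{a-1}(x+1)^{b-1}\bigl((a+b)x+a\bigr)$ has no irreducible factors besides $x$ and $x+1$ (here $\gcd(a,b)=1$ guarantees $T'\neq 0$). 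Two small remarks. First, your identity $f'=T'\,\sigma(T^{m-1})^2$ is obtained even more directly by termwise differentiation: in characteristic $2$ the even-index terms of $\sum_{k=0}^{2m}T^k$ die, leaving $T'\sum_{j=0}^{m-1}T^{2j}=T'\bigl(\sigma(T^{m-1})\bigr)^2$, which avoids differentiating the relation $f\cdot(T+1)=T^{2m+1}+1$ and then cancelling $T+1$. Second, one half-line is left implicit in your coprimality step in $\F_2[y]$: from $\gcd(2m+1,m)=1$ a common root $\zeta\in\overline{\F_2}$ of $(y^{2m+1}+1)/(y+1)$ and $(y^m+1)/(y+1)$ must satisfy $\zeta=1$, but you still need to exclude $\zeta=1$; this follows since $\sigma(y^{2m})$ takes the value $2m+1=1$ at $y=1$, and the check is not vacuous because for $m$ even $1$ \emph{is} a root of $(y^m+1)/(y+1)$. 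With that line added, your Bézout-substitution device (specializing $u\sigma_y+v\tau_y=1$ under the ring homomorphism $y\mapsto T(x)$) is a clean and fully rigorous way to transfer coprimality from $\F_2[y]$ to $\F_2[x]$; the equivalent argument in the cited source's style would factor $\sigma(T^{2m})=\prod_{\zeta\neq 1}(T+\zeta)$ over $\overline{\F_2}$ and note each factor is squarefree because $T'$ vanishes only at $0,1$ while $T(0)=T(1)=1\neq\zeta$ — the two formulations buy the same thing, and yours stays over the base field throughout.
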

Since we suppose that $A = x^a(x+1)^bP^cQ^d$, where $P$ and $Q$ are odd and irreducible, the following equalities (obtained from Corollary \ref{expressigmastar}) are useful:
\begin{equation} \label{lessigmastaralla}
\left\{\begin{array}{l}
\sigma^{**}(x^a)=(1+x)^{2^{\alpha}} \cdot \sigma(x^{2r}) \cdot (\sigma(x^{u-1}))^{2^{\alpha}},
\text{ with $\gcd(\sigma(x^{2r}), \sigma(x^{u-1}))=1$},\\
\text{if $a = 4r, 2r-1=2^{\alpha}u-1$, (resp. $a=4r+2, 2r+1=2^{\alpha}u-1$), $u$ odd}, r, \alpha \geq 1,\\
\sigma^{**}(x^a)=(1+x)^{2^{\alpha}-1} \cdot (\sigma(x^{u-1}))^{2^{\alpha}},
\text{if $a=2^{\alpha}u-1$ is odd, $u$ odd},
r, \alpha \geq 1.\\
\text{Set $\varepsilon_1 := \min(1,u-1) \in \{0,1\}$}.
\end{array}
\right.
\end{equation}
\begin{equation} \label{lessigmastarallb}
\left\{\begin{array}{l}
\sigma^{**}((x+1)^b)=x^{2^{\beta}} \cdot \sigma((x+1)^{2s}) \cdot
(\sigma((x+1)^{v-1}))^{2^{\beta}},\\
\text{if $b = 4s, 2s-1=2^{\beta}v-1$, (resp. $b=4s+2, 2s+1=2^{\beta}v-1$), $v$ odd}, s, \beta \geq 1,\\
\sigma^{**}((x+1)^b)=x^{2^{\beta}-1} \cdot
(\sigma((x+1)^{v-1}))^{2^{\beta}},
\text{if $b=2^{\beta}v-1$ is odd, with $v$ odd}, s, \beta \geq 1.\\
\text{Set $\varepsilon_2 := \min(1,v-1) \in \{0,1\}$}.
\end{array}
\right.
\end{equation}~\\
\begin{equation} \label{lessigmastar-add3}
\left\{\begin{array}{l}
\sigma^{**}(P^c)= (1+P)^{2^{\gamma}} \cdot \sigma(P^{2t}) \cdot (\sigma(P^{w-1}))^{2^{\gamma}},
\text{ with $\gcd(\sigma(P^{2t}), \sigma(P^{w-1}))=1$},\\
\text{if $c \in \{4t,4t+2\}$, where $2t-1$ or $2t+1$ is of the form $2^{\gamma}w-1$, $w$ odd},\\
\\
\sigma^{**}(P^c)=(1+P)^{2^{\gamma}-1} \cdot (\sigma(P^{w-1}))^{2^{\gamma}},
\text{ if $c=2^{\gamma}w-1$ is odd, $w$ odd},\\
\\
\sigma^{**}(Q^d)=(1+Q)^d =  x^{u_2d}(x+1)^{v_2d}P^{w_2d}\\
\text{if $d \in \{2, 2^{\gamma}-1: \gamma \geq 1\}$
and $Q = 1+x^{u_2}(x+1)^{v_2}P^{w_2}$}, 
t, \gamma, u_2, v_2, w_2 \geq 1.
\end{array}
\right.
\end{equation}
\section{Proof of Theorem \ref{caseomega3}} \label{caseomeg=3}
We set $A=x^a(x+1)^bP^c$, with $a,b,c \in \N^*$ and $P$ odd irreducible.\\
We suppose that
$A$ is b.u.p.:
$$\sigma^{**}(x^a) \cdot \sigma^{**}((x+1)^b) \cdot \sigma^{**}(P^c) =\sigma^{**}(A)
= A=x^a(x+1)^bP^c.$$
We show that $P$ is a Mersenne prime and we apply Theorem 1.1 in \cite{Gall-Rahav15}.
\begin{lemma} \label{aoubsup3}
The polynomial $\sigma^{**}(x^a (x+1)^b)$ does not split, so that $(a\geq 3$ or $b\geq 3)$ and
$(a \not= 2^n-1$ or $b \not= 2^m-1$ for any $n,m \geq 1)$.
\end{lemma}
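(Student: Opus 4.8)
The plan is to read the non-splitting of $\sigma^{**}(x^a(x+1)^b)$ straight off the perfectness equation, and then to deduce the two arithmetic restrictions on $a$ and $b$ from the splitting criterion of Corollary \ref{splitcriteria}.

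First I would exploit the b.u.p. identity $\sigma^{**}(x^a)\cdot\sigma^{**}((x+1)^b)\cdot\sigma^{**}(P^c) = x^a(x+1)^bP^c$, which holds by multiplicativity of $\sigma^{**}$ on the pairwise coprime factors. Since $P$ is odd and irreducible it cannot equal $x$ or $x+1$, so $\deg P \geq 2$, and $P$ certainly divides the right-hand side. By Lemma \ref{aboutsigmastar2}(ii), $P$ does not divide $\sigma^{**}(P^c)$. Hence $P$ is forced to divide the remaining product $\sigma^{**}(x^a)\cdot\sigma^{**}((x+1)^b) = \sigma^{**}(x^a(x+1)^b)$. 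Because $\deg P \geq 2$, this exhibits an irreducible factor of degree at least $2$ in $\sigma^{**}(x^a(x+1)^b)$, so that polynomial does not split over $\F_2$. This is the heart of the lemma.

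For the two stated consequences I would use that, over $\F_2$ (whose only linear polynomials are $x$ and $x+1$), a product of polynomials splits if and only if each factor splits; in particular $\sigma^{**}(x^a(x+1)^b) = \sigma^{**}(x^a)\cdot\sigma^{**}((x+1)^b)$ splits iff both factors do. By Corollary \ref{splitcriteria}(i), $\sigma^{**}(x^a)$ splits exactly when $a \in \{2\} \cup \{2^{\alpha}-1 : \alpha \geq 1\}$, and likewise for $b$. If both $a \leq 2$ and $b \leq 2$, then $a,b \in \{1,2\} \subseteq \{2\} \cup \{2^{\alpha}-1\}$, so both factors split and hence so does the product, contradicting the non-splitting just established; therefore $a \geq 3$ or $b \geq 3$. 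Similarly, if $a = 2^n-1$ and $b = 2^m-1$ for some $n,m \geq 1$, both factors split, again a contradiction; so $a$ and $b$ cannot both be of the form $2^k-1$.

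I do not expect a genuine obstacle here: the single real input is the observation that $P$ cannot be absorbed by $\sigma^{**}(P^c)$ and is thereby driven into $\sigma^{**}(x^a(x+1)^b)$. The remaining work is only matching the excluded exponents against the splitting list of Corollary \ref{splitcriteria}(i), and the one point worth stating carefully is the equivalence between a product splitting and each of its factors splitting, which makes the passage from the product back to separate conditions on $a$ and $b$ immediate.
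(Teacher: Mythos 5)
Your proof is correct and takes essentially the same approach as the paper: both arguments hinge on Lemma \ref{aboutsigmastar2}-ii) keeping $P$ out of $\sigma^{**}(P^c)$, and then read off the restrictions on $a$ and $b$ from the splitting criterion of Corollary \ref{splitcriteria}. The only cosmetic difference is that the paper argues by contradiction (a split product would equal $x^b(x+1)^a$, forcing $a=b$ and $\sigma^{**}(P^c)=P^c$), whereas you run the same divisibility bookkeeping directly by forcing $P$ into $\sigma^{**}(x^a(x+1)^b)$.
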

\begin{proof}
If $\sigma^{**}(x^a (x+1)^b)$ splits, then $\sigma^{**}(x^a (x+1)^b) = x^b(x+1)^a$. Thus, $a=b$ and $\sigma^{**}(P^c) = P^c$. It contradicts Lemma \ref{aboutsigmastar2}-ii).\\
If $a, b\leq 2$ or ($a = 2^n-1$, $b = 2^m-1$ for some $n,m \geq 1$), then $\sigma^{**}(x^a)$ and $\sigma^{**}((x+1)^b)$ split.
\end{proof}

\begin{corollary}
The polynomial $P$ is a Mersenne prime.
\end{corollary}
\begin{proof}
The polynomial $P$ does not divide $\sigma^{**}(P^{c})$ (Lemma \ref{aboutsigmastar2}). So, $\sigma^{**}(P^{c})$ splits and $P \in {\cal{M}}$, by Corollary \ref{splitcriteria}.
\end{proof}

\section{Proof of Theorem \ref{caseomega4}} \label{proofomega4}
In this section, we set $A=x^a(x+1)^bP^cQ^d$, with $a,b,c,d \in \N^*$, $P, Q$ odd irreducible, and
$\deg(P) \leq \deg(Q)$.
We suppose that $A$ is b.u.p.:
$$\sigma^{**}(x^a) \cdot \sigma^{**}((x+1)^b) \cdot \sigma^{**}(P^c)
\cdot \sigma^{**}(Q^d) =\sigma^{**}(A)= A=x^a(x+1)^bP^cQ^d.$$
We prove in Lemma \ref{valuesofd}, that $P \in {\mathcal{M}}$ and $(Q \in {\mathcal{M}}$ or it is of the form $1+x^{u_2}(x+1)^{v_2}P^{w_2}$, where $u_2, v_2, w_2 \geq 1$).
\begin{lemma} \label{valuesofd}
i) The polynomial $P$ is a Mersenne prime.\\
ii) The integer $d$ equals $2$ or it is of the form $d=2^{\delta}-1$, with $\delta \in \N^*$.\\
iii) The polynomial $Q$ is of the form $1+x^{u_2}(x+1)^{v_2}P^{w_2}$, where $w_2 \in \{0,1\}$.\\
iv) One has: $a, b \geq 3$ and $d \leq \min(a,b)$.\\
v) If $\sigma^{**}(P^c)$ does not split, then $Q$ is its unique odd divisor.
\end{lemma}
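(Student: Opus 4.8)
The plan is to split the bi-unitary perfect equation
$$\sigma^{**}(x^a)\,\sigma^{**}((x+1)^b)\,\sigma^{**}(P^c)\,\sigma^{**}(Q^d)=x^a(x+1)^bP^cQ^d$$
into its even part (the powers of $x$ and $x+1$) and its odd part (the powers of $P$ and $Q$), and to read off constraints from the explicit factorisations collected in \eqref{lessigmastarall} and \eqref{lessigmastar-add3}. Two facts drive everything: the only odd irreducible polynomials on the right are $P$ and $Q$, so every odd irreducible factor produced on the left lies in $\{P,Q\}$; and, by Lemma~\ref{aboutsigmastar2}~ii), $P\nmid\sigma^{**}(P^c)$ and $Q\nmid\sigma^{**}(Q^d)$. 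I would not prove the five items in the stated order: ii), iii), iv) and v) are essentially self-contained, while i) is the hard one and rests on them.

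For ii), write $d$ in the normal forms of \eqref{lessigmastar-add3}. If $d$ were even with $d\ge4$, then $\sigma^{**}(Q^d)$ contains the odd factor $\sigma(Q^{2t})$ with $t\ge1$ (Lemma~\ref{aboutsigmastar2}~i)); its irreducible factors lie in $\{P,Q\}$ and, since $Q\nmid\sigma^{**}(Q^d)$, are all equal to $P$, so $\sigma(Q^{2t})=P^m$. By Lemma~\ref{complete2}~i) this forces $m\le1$, whereas $\deg\sigma(Q^{2t})=2t\deg Q>\deg P$, a contradiction. The same device applied to $\sigma(Q^{w-1})$ excludes $d=2^{\delta}w-1$ with $w\ge3$, leaving $d=2$ or $d=2^{\delta}-1$; in both cases the normal form collapses to $\sigma^{**}(Q^d)=(1+Q)^d$. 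Since $(1+Q)^d$ divides $A$ and $Q\nmid(1+Q)$, the polynomial $1+Q$ is built only from $x$, $x+1$ and $P$, which gives the shape $Q=1+x^{u_2}(x+1)^{v_2}P^{w_2}$ in iii); the bound $w_2\le1$ then follows by examining the factorisation of $1+Q$ together with the irreducibility of $Q$. For iv), the factor $\sigma^{**}(x^a)$ has constant term $1$, so $a=v_x(A)$ equals the sum of the $x$-valuations of $\sigma^{**}((x+1)^b)$, $\sigma^{**}(P^c)$ and $\sigma^{**}(Q^d)$; each summand is at least $1$, since $x\mid\sigma^{**}((x+1)^b)$ and $x\mid1+P$, $x\mid1+Q$ (so $x$ divides $\sigma^{**}(P^c)$ and $\sigma^{**}(Q^d)$), whence $a\ge3$ and, symmetrically, $b\ge3$. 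Finally $x^{u_2 d}(x+1)^{v_2 d}\mid x^a(x+1)^b$ yields $d\le u_2 d\le a$ and $d\le v_2 d\le b$, i.e. $d\le\min(a,b)$.

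Item v) is immediate and does not use i): if $\sigma^{**}(P^c)$ does not split it has a nontrivial odd part, whose irreducible factors lie in $\{P,Q\}$ and are coprime to $P$ (Lemma~\ref{aboutsigmastar2}~ii)), hence are all equal to $Q$. Item i) is then proved by contradiction. Suppose $P$ is not Mersenne; by Corollary~\ref{splitcriteria}~ii) the polynomial $\sigma^{**}(P^c)$ never splits, so by v) its odd part is a power of $Q$. Using the coprimality of the two odd blocks in \eqref{lessigmastar-add3} and Lemma~\ref{complete2}~i) exactly as in ii), this forces $Q=\sigma(P^{2k})$ for some $k\ge1$. Combining with $Q=1+x^{u_2}(x+1)^{v_2}P^{w_2}$ from iii) gives $P\,\sigma(P^{2k-1})=x^{u_2}(x+1)^{v_2}P^{w_2}$; since $P\nmid\sigma(P^{2k-1})$, this forces $w_2=1$ and $\sigma(P^{2k-1})=x^{u_2}(x+1)^{v_2}$, i.e. $\sigma^{**}(P^{2k-1})$ splits. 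But Corollary~\ref{splitcriteria}~ii) says an odd power $\sigma^{**}(P^{2k-1})$ splits only when $P$ is Mersenne, contradicting our assumption. Hence $P$ is a Mersenne prime.

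The main obstacle is item i): everything else is bookkeeping with the valuations and with Lemma~\ref{complete2}, but i) requires the full chain ``$P$ non-Mersenne $\Rightarrow$ $\sigma^{**}(P^c)$ non-split $\Rightarrow$ odd part a power of $Q$ $\Rightarrow$ $Q=\sigma(P^{2k})$'' and the telescoping identity $P\,\sigma(P^{2k-1})=1+Q$ that finally collides with the non-splitting of $\sigma^{**}(P^{2k-1})$. I expect the delicate points to be confirming that the odd part of $\sigma^{**}(P^c)$ is a single power of $Q$ (so that Lemma~\ref{complete2}~i) applies) and pinning down $w_2\le1$ in iii), both of which hinge on the coprimality statements recorded in \eqref{lessigmastar-add3} and on $\deg P\le\deg Q$.
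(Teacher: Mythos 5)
Your items ii), iv) and v), and the shape part of iii), are correct and essentially identical to the paper's proof (the same degree contradictions via Lemma \ref{complete2}-i), the same valuation count for $a,b\geq 3$, the same $x^{u_2d}(x+1)^{v_2d}\mid A$ argument), and your reordering of the items creates no circularity. The genuine gap is in i). From v) you know the odd part of $\sigma^{**}(P^c)$ is concentrated on $Q$, but your next step --- ``this forces $Q=\sigma(P^{2k})$'' --- does not follow: every normal form of $\sigma^{**}(P^c)$ contains a positive power of $1+P$ in addition to the $\sigma(P^{2t})$, $\sigma(P^{w-1})$ blocks, and for $c=2$ or $c=2^{\gamma}-1$ one has $\sigma^{**}(P^c)=(1+P)^c$ with no $\sigma$-blocks at all. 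If $P$ is not Mersenne, then $1+P$ itself has an odd irreducible factor, so the branch you overlooked is not exceptional but the one that always occurs; in it the conclusion is $Q\mid 1+P$, not $Q=\sigma(P^{2k})$. That branch does yield a contradiction, even faster: since $P$ is odd, $x(x+1)$ divides $1+P$, so any odd irreducible factor of $1+P$ has degree at most $\deg P-2<\deg P\leq\deg Q$. This one observation is in fact the paper's entire proof of i): $1+P$ divides $\sigma^{**}(P^c)$ for every $c\geq 1$, so if $1+P$ did not split its odd factor would have to be $Q$, giving $\deg Q<\deg P\leq \deg Q$. Your detour through the telescoping identity $1+Q=P\,\sigma(P^{2k-1})$ and the non-splitting of $\sigma^{**}(P^{2k-1})$ is thus both incomplete as written and unnecessary once the missing branch is filled in.

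A second defect is your claim in iii) that $w_2\leq 1$ ``follows by examining the factorisation of $1+Q$ together with the irreducibility of $Q$.'' That deduction is false: $Q=1+x(x+1)^3{M_1}^4=\sigma(x^{12})$ is irreducible (the paper's own Lemma \ref{lesQirreductibles}-ii)) and has $w_2=4$. In fairness, the paper's proof of iii) establishes only $w_2\in\N$ and never proves the clause $w_2\in\{0,1\}$; that clause is inconsistent with the paper's subsequent usage, where Relations (\ref{lessigmastar-add3}) allow arbitrary $w_2\geq 1$ and Corollary \ref{synthese5} exhibits candidates with $w_2=2^{\nu}\in\{2,4\}$ (eliminated only later by computation), so it appears to be a misprint in the statement. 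Still, your proposal asserts a proof of this clause where none exists, and that assertion, like the forcing step in i), is a concrete hole rather than bookkeeping.
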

\begin{proof}
i): We remark that $1+P$ divides $\sigma^{**}(P^c)$. If $1+P$ does not split over $\F_2$,
then $Q$ is an odd irreducible divisor of $1+P$ and we get the contradiction:
$\deg(Q) < \deg(P) \leq \deg(Q)$.\\
ii): If $d$ is even and if $d \geq 4$, then $d$ is of the form $4r$ or $4r+2$. Thus, the odd polynomial
$\sigma(Q^{2r})$
divides $\sigma^{**}(A) = A$,
so we must have $P=\sigma(Q^{2r})$, which contradicts the fact: $\deg(P) \leq \deg(Q)$.\\
If $d=2^{\delta}w-1$ is odd (with $w$ odd) and if $w \geq 3$, then
$P=\sigma(Q^{w-1})$ and $\deg(P) > \deg(Q)$, which is impossible.\\
iii): From ii), $\sigma^{**}(Q^d)= (1+Q)^d$ so that $(1+Q)^d$ divides $A$. We may put: $1+Q =x^{u_2}(x+1)^{v_2}P^{w_2}$, for some $u_2,v_2,w_2 \in \N$, $u_2, v_2 \geq 1$.\\
iv): $a, b \geq 3$ because $1+x$ divide $\sigma^{**}(x^a)$, $x$ divides $\sigma^{**}((x+1)^b)$ and
$x(x+1)$ divides both
$\sigma^{**}(P^c)$ and $\sigma^{**}(Q^d)$.\\
From the proof of iii),
$x^{du_2}$ and $(x+1)^{dv_2}$ both divide $A$. Thus, $d \leq \min(a,b)$.\\
v) is immediate.
\end{proof}

\subsection{Case where $Q \in {\mathcal{M}}$} \label{caseQmers}
Here, $P$ and $Q$ are both Mersenne, so we apply Theorem 1.1 in \cite{Gall-Rahav15}.

\subsection{Case where $Q \not\in {\mathcal{M}}$} \label{caseQnonmers}
We prove Proposition \ref{QnonMersenne}.
\begin{proposition} \label{QnonMersenne}
If $A$ is b.u.p., where $P \in {\mathcal{M}}$ but $Q \not\in {\mathcal{M}}$, then $A, \overline{A} \in \{D_1, D_2\}$.
\end{proposition}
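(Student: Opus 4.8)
The plan is to first fix the shape of $Q$, reduce the perfection identity accordingly, and then show that the non-Mersenne prime $Q$ is forced to be $\sigma(P^2)$ with $P=M_1$. By Lemma~\ref{valuesofd}-ii) we have $d\in\{2,2^{\delta}-1\}$, so $\sigma^{**}(Q^d)=(1+Q)^d$, and by Lemma~\ref{valuesofd}-iii), $Q=1+x^{u_2}(x+1)^{v_2}P^{w_2}$ with $w_2\in\{0,1\}$. I would first rule out $w_2=0$: if $w_2=0$ then $Q=1+x^{u_2}(x+1)^{v_2}$ is irreducible, which forces $\gcd(u_2,v_2)=1$ (otherwise, writing $g=\gcd(u_2,v_2)$, the polynomial $1+x^{u_2/g}(x+1)^{v_2/g}$ would be a proper factor), so $Q$ would be Mersenne, contrary to $Q\notin\mathcal{M}$. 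Hence $w_2=1$, $\sigma^{**}(Q^d)=x^{u_2d}(x+1)^{v_2d}P^{d}$, and dividing the perfection identity by this factor yields the reduced identity
\[
\sigma^{**}(x^a)\,\sigma^{**}((x+1)^b)\,\sigma^{**}(P^c)=x^{a-u_2d}(x+1)^{b-v_2d}P^{c-d}Q^{d},
\]
whose only odd irreducible factors are $P$ and $Q$.

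Expanding the left-hand side through (\ref{lessigmastarall}) and (\ref{lessigmastar-add3}), the odd content is assembled from the blocks $\sigma(x^{2r})$, $\sigma(x^{u-1})$, $\sigma((x+1)^{2s})$, $\sigma((x+1)^{v-1})$, $\sigma(P^{2t})$ and $\sigma(P^{w-1})$, each odd and squarefree (Lemma~\ref{squarefreeMers}) with all irreducible factors lying in $\{P,Q\}$, and with $P$ dividing none of $\sigma(P^{2t})$, $\sigma(P^{w-1})$. Since $\sigma^{**}(P^c)$ is coprime to $P$, the exponent $c-d$ of $P$ on the right must be produced by the $x$- and $(x+1)$-blocks, so a block of the form $\sigma(x^{2m})$ or $\sigma((x+1)^{2m})$ equals $P$ (up to the $x\leftrightarrow x+1$ symmetry); Lemma~\ref{complete2}-vi) then gives $2m\in\{2,4\}$ and hence $P\in\{M_1,M_4,M_5\}$.

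The heart of the argument is to locate $Q$. If $Q$ divided a block $\sigma(x^{2r})$, then from $1+\sigma(x^{2r})=x\,\sigma(x^{2r-1})$ together with $1+Q=x^{u_2}(x+1)^{v_2}P$ one would need $\sigma(x^{2r-1})$ to carry the single odd prime $P$ and otherwise split; since the exponents are bounded as in Lemma~\ref{valuesofabMers} (so $2r\le 6$), a short direct check over $\sigma(x^2),\sigma(x^4),\sigma(x^6)$ excludes this, and symmetrically for $\sigma((x+1)^{2s})$. Hence $Q$ must originate in $\sigma^{**}(P^c)$: either $Q=\sigma(P^{2t})$ (even case of (\ref{lessigmastar-add3}), contributing a single $Q$, so $d=1$), or $Q=\sigma(P^{w-1})$ occurring to the power $2^{\gamma}$ (odd case, contributing $Q^{2^{\gamma}}$, so $d=2^{\gamma}$). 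In both cases $1+Q=P\cdot\sigma(P^{k})$ with $k$ odd, and $w_2=1$ forces $\sigma(P^{k})$ to split into a power of $x(x+1)$; by Corollary~\ref{splitcriteria}-ii) this requires $k=2^{\gamma}-1$, so $\sigma(P^{k})=(1+P)^{k}$ and the underlying even exponent equals $2^{\gamma}$. Finally, irreducibility of $Q=\sigma(P^{2^{\gamma}})$ forces $\gamma=1$: for $\gamma\ge 2$ the polynomial $\sigma(P^{2^{\gamma}})$ is reducible, its roots generating an extension of $\F_2$ of degree too small to support an irreducible factor of the full degree. Thus $Q=\sigma(P^2)=1+P+P^2$. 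I expect this paragraph—excluding the rival origins of $Q$ in the $x$-blocks and ruling out $\gamma\ge 2$—to be the main obstacle, since it is exactly where squarefreeness, the splitting criterion and an irreducibility analysis of $\sigma(P^{2^{\gamma}})$ must be combined.

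Combining $P\in\{M_1,M_4,M_5\}$ with the requirement that $\sigma(P^2)$ be irreducible (excluded for $M_4$ and $M_5$ by a direct check) pins down $P=M_1$, whence $Q=\sigma(M_1^2)=1+x+x^4=S_1$ and $1+Q=x(x+1)M_1$, so $u_2=v_2=1$. With $P=M_1$ and $Q=S_1$ fixed, the exponents $a,b,c,d$ are bounded exactly as in the Mersenne case (the analogue of Lemma~\ref{valuesofabMers}), so that matching the four exponents of $x$, $x+1$, $M_1$ and $S_1$ on the two sides of the perfection identity is a finite verification. A direct computation over these bounded ranges, using Lemma~\ref{translation} together with $\overline{M_1}=M_1$ and $\overline{S_1}=S_1$ to normalise via $A\mapsto\overline{A}$, leaves precisely $A\in\{D_1,D_2\}$, giving the claimed conclusion $A,\overline{A}\in\{D_1,D_2\}$.
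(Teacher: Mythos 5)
Your proposal follows the paper's broad skeleton (pin down the shape of $Q$, classify where the factor $Q$ can originate, then finish by a bounded search), but two of your exclusion steps rest on claims that are false, and they collapse cases that the paper must — and does — carry all the way to the final computation. The central gap is your "heart of the argument": the claim that $Q$ cannot divide an $x$- or $(x+1)$-block, verified by "a short direct check over $\sigma(x^2),\sigma(x^4),\sigma(x^6)$" under the bound $2r\le 6$. That bound is imported from Lemma \ref{valuesofabMers}, which is proved under the hypothesis $Q\in\mathcal{M}$ (there every odd block is a product of Mersenne primes, so Lemma \ref{complete2}-v applies); with $Q\notin\mathcal{M}$ a block may equal $Q$ or $PQ$ of much larger degree, and such configurations genuinely exist: $PQ=\sigma(x^{8})$ with $P=M_1$, $Q=1+x^3+x^6$; $PQ=\sigma(x^{24})$ with $P=M_4$, $Q=1+x^5(x^5+1)^3$; $Q=\sigma(x^{12})=1+x(x+1)^{3}M_1^{4}$; $Q=\sigma(x^{10})=1+x(x+1)M_4^{2}$ — in each case $Q$ irreducible and non-Mersenne (Lemmas \ref{factorifPQ=sigmx2m}, \ref{factorifQ=sigmx2m}, \ref{lesQirreductibles}). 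For the same reason your inference "some block equals $P$, so $2m\in\{2,4\}$ by Lemma \ref{complete2}-vi" is flawed: a block divisible by $P$ may equal $PQ$, to which that lemma does not apply (and when $c=d$ the reduced identity needs no $P$-block at all, so $P\in\{M_1,M_4,M_5\}$ requires a separate argument — the paper gets $P\in\mathcal{M}$ from Lemma \ref{valuesofd}-i instead). The paper keeps these as two of its three cases, bounds the exponents there (Corollaries \ref{synthese4} and \ref{synthese5}, with $a$ as large as $16$ and $26$), and only the terminal Maple search shows they produce no b.u.p. polynomial. Had one of them produced an example, your argument would have missed it.

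The second false step is the claim that $\sigma(P^{2^{\gamma}})$ is reducible for $\gamma\ge 2$, forcing $Q=\sigma(P^2)$. In fact $\sigma(M_1^{4})=1+x^3(x+1)^3M_1$ \emph{is} irreducible (Lemma \ref{lesQirreductibles}-iii), which is why the paper's Lemma \ref{factorifQ=sigmP2m} and Corollary \ref{synthese3} retain both $Q=1+x(x+1)P$ and $Q=1+x^3(x+1)^3P$; the case $Q=\sigma(P^{4})$ is eliminated only by the final computation, not by irreducibility. Relatedly, your closing "finite verification" invokes exponent bounds "as in Lemma \ref{valuesofabMers}", but those were never established in this setting; the paper must derive case-specific bounds (Lemma \ref{lesabcd1}: $a+b$ odd, $a,b\le 11$, $c\le 8$, $d\le 3$) before searching. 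Your conclusion $A,\overline{A}\in\{D_1,D_2\}$ happens to be correct, and your identification $Q=\sigma(M_1^2)=S_1$ with $P=M_1$ is a valid thread, but as written the proof is unsound: it discards live cases — exactly the cases that constitute most of the paper's Section \ref{caseQnonmers} — on the strength of two incorrect claims.
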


\subsubsection{Useful facts} \label{usefulfacts}
As in Lemma \ref{aoubsup3}, one has: $a\geq 3$ or $b\geq 3$.
Lemma \ref{valuesofd} allows to write: $P=1+x^{u_1}(x+1)^{v_1}$ and
$Q=1+x^{u_2}(x+1)^{v_2}P^{w_2}$, with $u_i,v_j,w_2 \geq 1$.
We obtain Corollaries \ref{synthese4}, \ref{synthese5} and \ref{synthese3}. Only, the last of them gives b.u.p. polynomials, namely $D_{1}, D_{2}$, $\overline{D}_1$ and $\overline{D}_2$ (see Section \ref{compute}).\\
For any $g \geq 1$, $PQ$ is not of the form $\sigma(P^{2g})$, because $P$ does not divide $\sigma(P^{2g})$. We shall see that it suffices to consider three cases (replace $A$ by $\overline{A}$, if necessary):
$\text{$PQ=\sigma(x^{2m})$, $Q=\sigma(x^{2m})$, $Q=\sigma(P^{2m})$, for some $m\geq 1$.}$
\begin{lemma} \label {lesodddivisors}
i) Let $n \geq 1$ be such that $\sigma(x^{2n})$ $($resp. $\sigma((x+1)^{2n})$, $\sigma(P^{2n}))$ divides $\sigma^{**}(A)$, then $\sigma(x^{2n}) \in \{P,Q,PQ\}$ $($resp. $\sigma((x+1)^{2n}) \in \{P,Q,PQ\}$, $\sigma(P^{2n})=Q)$.\\
ii) For any $n \geq 1$, $\sigma(Q^{2n})$ does not divide $\sigma^{**}(A)$.
\end{lemma}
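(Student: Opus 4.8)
The plan is to run everything off the defining identity $\sigma^{**}(A) = A = x^a(x+1)^b P^c Q^d$: since $A$ has exactly the four irreducible factors $x$, $x+1$, $P$, $Q$, every monic irreducible dividing any divisor of $\sigma^{**}(A)$ must lie in $\{x,x+1,P,Q\}$. I would first assemble the arithmetic facts about the relevant sums. By Lemma \ref{squarefreeMers} the polynomials $\sigma(x^{2n})$, $\sigma((x+1)^{2n})$ and $\sigma(P^{2n})$ (the last because $P$ is Mersenne) are all odd and squarefree. The sum $\sigma(Q^{2n})$ is not covered by that lemma, but it is at least odd: writing $Q = 1 + x^{u_2}(x+1)^{v_2}P^{w_2}$ gives $Q(0) = Q(1) = 1$, so $\sigma(Q^{2n})(0) = \sigma(Q^{2n})(1) = 2n+1 = 1$ in $\F_2$, whence neither $x$ nor $x+1$ divides it. I also record the elementary congruence $\sigma(T^{2n}) \equiv 1 \pmod{T}$ for any irreducible $T$, so that $T \nmid \sigma(T^{2n})$.

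For part i), suppose $\sigma(x^{2n}) \mid \sigma^{**}(A) = A$. Being odd, its irreducible factors lie in $\{P,Q\}$; being squarefree and nonconstant (its degree is $2n \geq 2$), it must equal $P$, $Q$ or $PQ$. The argument for $\sigma((x+1)^{2n})$ is identical. For $\sigma(P^{2n})$ the same reasoning confines the factors to $\{P,Q\}$, but the congruence $P \nmid \sigma(P^{2n})$ eliminates $P$; squarefreeness and nonconstancy then leave only $\sigma(P^{2n}) = Q$.

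The real work is in part ii), and this is where I expect the main obstacle to lie: the squarefreeness half of Lemma \ref{squarefreeMers} is unavailable because $Q \notin {\mathcal{M}}$, so the clean divisor-structure argument of part i) does not transfer and one must find a different contradiction. Suppose $\sigma(Q^{2n}) \mid A$. By the oddness established above together with $Q \nmid \sigma(Q^{2n})$, the only irreducible that can occur is $P$, so $\sigma(Q^{2n}) = P^k$ for some $k \geq 1$ (it is nonconstant). I would then invoke Canaday's Lemma \ref{complete2}-i with irreducible base $Q$, which forces $k \in \{0,1\}$ and hence $\sigma(Q^{2n}) = P$. But then $\deg(P) = 2n\,\deg(Q) \geq 2\deg(Q) > \deg(Q)$, contradicting the standing hypothesis $\deg(P) \leq \deg(Q)$. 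Thus no such $n$ exists, and the contradiction is extracted precisely from the single-irreducible-factor shape of $\sigma(Q^{2n})$ combined with Lemma \ref{complete2}-i and the degree inequality rather than from squarefreeness.
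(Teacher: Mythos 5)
Your proof is correct and takes essentially the same route as the paper: part i) rests on the oddness and squarefreeness from Lemma \ref{squarefreeMers} together with $T \nmid \sigma(T^{2n})$ to exclude $P$ from $\sigma(P^{2n})$, and part ii) forces $\sigma(Q^{2n}) = P^m$, invokes Lemma \ref{complete2}-i) (with the roles of the two polynomials swapped) to get $m=1$, and concludes with the degree contradiction $\deg(Q) \geq \deg(P) = 2n\,\deg(Q) > \deg(Q)$. The only difference is cosmetic: you verify explicitly, by evaluation at $0$ and $1$, that $\sigma(Q^{2n})$ is odd, a point the paper leaves implicit.
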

\begin{proof} Recall that we suppose: $\sigma^{**}(A)=A$.\\
i): $\sigma(x^{2n})$, $\sigma((x+1)^{2n})$ and $\sigma(P^{2n})$ are all odd and
squarefree (Lemma \ref{squarefreeMers}). Hence, they belong to $\{P,Q,PQ\}$ whenever they divide $\sigma^{**}(A)$, with $\sigma(P^{2n}) \not\in  \{P,PQ\}$.\\
ii): If $\sigma(Q^{2n}) \mid \sigma^{**}(A)$, then $P^m=\sigma(Q^{2n})$, with $m=1$, by
Lemma \ref{complete2}-i). So, we get the
contradiction: $\deg(Q) \geq \deg(P) = 2n \deg(Q) > \deg(Q)$.
\end{proof}

\begin{lemma} [\cite{Canaday}, Lemma 4, page 726]  \label{Canadaypage726}~\\
The polynomial $1+x(x+1)^{2^{\nu}-1}$ is irreducible if and only if $\nu \in \{1,2\}$.
\end{lemma}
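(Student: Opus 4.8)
The plan is to first rewrite the polynomial in a form that exposes its arithmetic. Writing $f_\nu := 1 + x(x+1)^{2^\nu-1}$ and using the Frobenius identity $(x+1)^{2^\nu} = x^{2^\nu}+1$ in $\F_2[x]$, I would compute $x(x+1)^{2^\nu-1} = \frac{x\,(x^{2^\nu}+1)}{x+1} = \frac{x^{2^\nu+1}+x}{x+1}$, so that
$$f_\nu = \frac{(x+1)+x^{2^\nu+1}+x}{x+1} = \frac{x^{2^\nu+1}+1}{x+1} = 1+x+\cdots+x^{2^\nu} = \sigma(x^{2^\nu}).$$
This is the key reduction: the problem becomes deciding when $\sigma(x^{2^\nu})$ is irreducible, and its roots are exactly the nontrivial $n$-th roots of unity with $n = 2^\nu+1$. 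The identity also ties directly into the paper's notation, since $\sigma(x^2)=M_1$ and $\sigma(x^4)=M_4$.

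For the two values $\nu=1,2$, a direct check shows $f_1 = 1+x+x^2 = M_1$ and $f_2 = 1+x+x^2+x^3+x^4 = M_4$, both of which are irreducible. For the converse I would establish reducibility for every $\nu \geq 3$ by a single degree estimate. Let $n = 2^\nu+1$ and let $r$ be the multiplicative order of $2$ modulo $n$. Since $2^\nu \equiv -1 \pmod n$, we have $2^{2\nu} \equiv 1 \pmod n$, so $r \mid 2\nu$ and in particular $r \leq 2\nu$. Because $n \mid 2^r-1$, all $n$-th roots of unity lie in $\F_{2^r}$; hence $f_\nu$ divides $x^n+1$ and splits completely over $\F_{2^r}$, so every irreducible factor of $f_\nu$ over $\F_2$ has degree dividing $r$, thus at most $2\nu$. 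Since $\deg f_\nu = 2^\nu$ and $2^\nu > 2\nu$ for all $\nu \geq 3$, the polynomial $f_\nu$ must have at least two irreducible factors and is therefore reducible.

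The main point requiring care is the degree bound in the reducibility step: one must justify that the splitting field of $f_\nu$ is contained in $\F_{2^r}$ and that consequently each $\F_2$-irreducible factor has degree at most $r$. The numerical boundary is exactly $2\nu = 2^\nu$, which holds only for $\nu \in \{1,2\}$; this is why the small cases genuinely need the explicit irreducibility verifications and are not covered by the degree estimate. A pleasant feature of this approach is that no appeal to Fermat primes or primitive roots is needed, since the bound $r \le 2\nu$ alone separates the irreducible cases from the reducible ones.
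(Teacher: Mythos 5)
Your proof is correct, and there is nothing internal to compare it against: the paper does not prove this lemma, importing it verbatim from Canaday (\cite{Canaday}, Lemma 4, page 726). Your argument is a clean, self-contained reconstruction. The reduction $1+x(x+1)^{2^\nu-1}=\bigl(x^{2^\nu+1}+1\bigr)/(x+1)=\sigma\bigl(x^{2^\nu}\bigr)$ is valid (using $(x+1)^{2^\nu}=x^{2^\nu}+1$ and $x+x=0$), and it pleasantly explains why the two irreducible cases are exactly the paper's $M_1=\sigma(x^2)$ and $M_4=\sigma(x^4)$. The reducibility step is sound: with $n=2^\nu+1$ odd, the order $r$ of $2$ modulo $n$ is well defined, $2^\nu\equiv -1 \pmod n$ forces $r\mid 2\nu$, every root of $f_\nu$ lies in $\F_{2^r}$ since $n\mid 2^r-1$, hence every $\F_2$-irreducible factor has degree dividing $r\leq 2\nu<2^\nu=\deg f_\nu$ for $\nu\geq 3$, so $f_\nu$ cannot be irreducible; and the boundary cases $2^\nu=2\nu$ at $\nu\in\{1,2\}$ are correctly settled by the explicit checks ($M_1$ has no root in $\F_2$; $M_4$ is the fifth cyclotomic polynomial and $2$ has order $4$ modulo $5$). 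In substance this is the classical cyclotomic argument and very close to Canaday's own, but your packaging via the single inequality $2^\nu>2\nu$ is tighter than invoking the full criterion ``$\sigma(x^{n-1})$ irreducible iff $n$ prime and $2$ a primitive root mod $n$,'' since you never need $n$ to be prime (no Fermat-prime discussion). One cosmetic point: your phrase ``its roots are exactly the nontrivial $n$-th roots of unity'' tacitly uses separability of $x^n+1$ (true here because $n$ is odd), but your degree bound only needs $f_\nu\mid x^n+1$, so nothing hinges on it.
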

\begin{lemma} \label{sigmastar2P}
If $\sigma(P^{2n})$ divides $A$ for some $n \geq 1$, then $2n=2^{\gamma}$, $2n-1 \leq \min(a,b)$.
\end{lemma}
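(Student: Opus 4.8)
\section*{Proof proposal}

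The plan is to first pin down what $\sigma(P^{2n})$ must be. Since $\sigma(P^{2n})$ is odd, squarefree (Lemma \ref{squarefreeMers}) and nonconstant, and it divides $\sigma^{**}(A)=A$, Lemma \ref{lesodddivisors}-i forces $\sigma(P^{2n})=Q$ (it cannot equal $P$ or $PQ$, since $P\nmid\sigma(P^{2n})$). I would then exploit the characteristic-$2$ identity $\sigma(P^{2m+1})=(1+P)\,\sigma(P^{m})^2$, which follows at once from $1+P^{m+1}=(1+P)\sigma(P^{m})$, to rewrite $Q+1=\sigma(P^{2n})+1=P\cdot\sigma(P^{2n-1})=P(1+P)\sigma(P^{n-1})^2$.

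Next I would compare this with the shape of $Q$ from Lemma \ref{valuesofd}-iii, namely $Q+1=x^{u_2}(x+1)^{v_2}P^{w_2}$, so that $Q+1$ has only $x$, $x+1$ and $P$ as irreducible factors. As $P$ is Mersenne we may write $1+P=x^{u_1}(x+1)^{v_1}$, and since $\sigma(P^{n-1})\equiv 1\pmod{P}$ the factor $\sigma(P^{n-1})^2$ is coprime to $P$. Hence $\sigma(P^{n-1})$ must itself split over $\F_2$, i.e.\ be a product of powers of $x$ and $x+1$.

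The heart of the argument, and the step I expect to be the main obstacle, is the sub-claim that for a Mersenne prime $P$ the polynomial $\sigma(P^{m})$ splits over $\F_2$ if and only if $m=2^{\gamma}-1$, in which case $\sigma(P^{m})=(1+P)^{m}$. I would prove it by induction on $m$: if $m$ is even and positive then $\sigma(P^{m})$ is odd, squarefree and nonconstant (Lemma \ref{squarefreeMers}), so it has an odd irreducible factor and cannot split; if $m$ is odd, the identity $\sigma(P^{m})=(1+P)\sigma(P^{(m-1)/2})^2$ reduces the splitting of $\sigma(P^{m})$ to that of $\sigma(P^{(m-1)/2})$, and the induction hypothesis gives $(m-1)/2=2^{\gamma'}-1$, whence $m=2^{\gamma'+1}-1$. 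Applied with $m=n-1$ this yields $n-1=2^{\gamma}-1$, i.e.\ $n=2^{\gamma}$ and $2n=2^{\gamma+1}$ is a power of $2$, as required.

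Finally, to obtain the inequality, I would substitute $\sigma(P^{n-1})=(1+P)^{n-1}$ back in, giving $Q+1=P(1+P)^{2n-1}=P\cdot x^{u_1(2n-1)}(x+1)^{v_1(2n-1)}$. Since $d\in\{2,2^{\delta}-1\}$ by Lemma \ref{valuesofd}-ii, we have $\sigma^{**}(Q^{d})=(1+Q)^{d}$, and this divides $\sigma^{**}(A)=A$. Comparing the exponents of $x$ and of $x+1$ on both sides yields $d\,u_1(2n-1)\le a$ and $d\,v_1(2n-1)\le b$; as $d,u_1,v_1\ge 1$, this forces $2n-1\le\min(a,b)$, completing the proof.
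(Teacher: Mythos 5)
Your proof is correct and follows essentially the same route as the paper's: both identify $Q=\sigma(P^{2n})$ from the odd squarefree divisor lemma, factor $1+Q=P\,\sigma(P^{2n-1})$, force the cofactor of $P$ to split over $\F_2$ to conclude $2n$ is a power of $2$, and then compare exponents of $x$ and $x+1$ using $\sigma^{**}(Q^d)=(1+Q)^d$. The only difference is cosmetic: where the paper writes $2n=2^{\gamma}h$ with $h$ odd and invokes the ready-made factorization $\sigma(P^{2^{\gamma}h-1})=(1+P)^{2^{\gamma}-1}\bigl(\sigma(P^{h-1})\bigr)^{2^{\gamma}}$ from Corollary \ref{expressigmastar} to force $h=1$, you re-derive that same splitting criterion by induction on the doubling identity $\sigma(P^{2m+1})=(1+P)\sigma(P^{m})^{2}$.
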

\begin{proof}
Since $\sigma(P^{2n})$ is odd and square-free, $Q$ must divide it.
So $Q=\sigma(P^{2n})$.
Put: $2n = 2^{\gamma} h$, with $h$ odd.\\
We get:
$\displaystyle{1+P+\cdots +P^{2n-1} = \frac{1+\sigma(P^{2n})}{P} = \frac{1+Q}{P} = x^{u_2}(x+1)^{v_2}P^{w_2 -1}}$. Thus, $w_2=1$ and
$(1+P)^{2^{\gamma}-1} (1+P+\cdots+P^{h-1})^{2^{\gamma}} = 1+P+\cdots + P^{2n-1} = x^{u_2}(x+1)^{v_2}$.
Hence, $h=1$, $2n-1 \leq (2^{\gamma}-1)u_1 = u_2 \leq a$ and $2n-1 \leq (2^{\gamma}-1)v_1 = v_2 \leq b$.
\end{proof}
\begin{lemma} \label{lesQirreductibles}
i) Let $P=M_4$ and $Q=1+x^5(x+1)^{2^{\nu}-1} P^{2^{\nu}-1}$,
with $\nu \geq 1$. Then, $Q$
is irreducible if and only if $\nu=2$.\\
ii) Let $P \in \{M_1, M_4\}$ and $Q=1+x(x+1)^{2^{\nu}-1}P^{2^{\nu}}$, with $\nu \leq 10$.
Then, $Q$ is
irreducible if and only if
$(\nu=2$, $P=M_1)$ or $(\nu=1$, $P=M_4)$.\\
iii) Let $P \in \{M_1, M_4\}$ and $Q=1+P(1+P)^{2^{\nu}-1}$. Then, $Q$ is irreducible
if and only if $P=M_1$ and $\nu \in \{1,2\}$.
\end{lemma}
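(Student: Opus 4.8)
The plan is to reduce each of the three families to a polynomial whose irreducibility is already controlled, by exploiting the four identities
$(x+1)M_1 = 1+x^3$, $(x+1)M_4 = 1+x^5$, $1+M_1 = x(x+1)$ and $1+M_4 = x(x+1)^3$,
all immediate from the definitions of $M_1$ and $M_4$. Throughout I write $\Phi_d$ for the $d$-th cyclotomic polynomial over $\F_2$ and use the standard facts that, for $d$ odd, $\Phi_d$ is irreducible over $\F_2$ if and only if $2$ is a primitive root modulo $d$ (that is, $\mathrm{ord}_d(2)=\phi(d)$), together with $\sigma(x^m)=(1+x^{m+1})/(1+x)=\prod_{d\mid m+1,\,d>1}\Phi_d$.

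For part i, I factor $(x+1)^{2^\nu-1}M_4^{2^\nu-1}=((x+1)M_4)^{2^\nu-1}=(1+x^5)^{2^\nu-1}$, so that $Q=1+x^5(1+x^5)^{2^\nu-1}$ is exactly the Canaday polynomial $1+w(1+w)^{2^\nu-1}$ evaluated at $w=x^5$. By Lemma \ref{Canadaypage726} this inner polynomial is reducible for every $\nu\geq 3$, hence so is $Q$; and for $\nu\in\{1,2\}$ it equals $M_1(w)$ and $M_4(w)$ respectively, so $Q=M_1(x^5)=\Phi_3\Phi_{15}$ (reducible) when $\nu=1$, while $Q=M_4(x^5)=\Phi_{25}$ is irreducible when $\nu=2$ since $\mathrm{ord}_{25}(2)=20=\phi(25)$. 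This gives irreducibility exactly for $\nu=2$. For part ii the same two identities, after clearing the factor $1+x$, produce the closed forms $Q=\sigma(x^{3\cdot 2^\nu})$ when $P=M_1$ and $Q=\sigma(x^{5\cdot 2^\nu})$ when $P=M_4$; irreducibility then holds if and only if $3\cdot 2^\nu+1$ (resp. $5\cdot 2^\nu+1$) is prime with $2$ a primitive root modulo it. Over the finite range $1\leq\nu\leq 10$ this is a bounded check, and the only survivors are $\nu=2,\ P=M_1$ (giving $\Phi_{13}$) and $\nu=1,\ P=M_4$ (giving $\Phi_{11}$).

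For part iii I use $1+M_1=x(x+1)$ and $1+M_4=x(x+1)^3$ to write $Q=R_\nu(g(x))$, where $R_\nu(y)=1+(y+1)y^{2^\nu-1}=y^{2^\nu}+y^{2^\nu-1}+1$ and $g=x^2+x$ for $P=M_1$, $g=x(x+1)^3$ for $P=M_4$. The substitution $y\mapsto y+1$ carries $R_\nu$ to the Canaday polynomial $1+y(1+y)^{2^\nu-1}$, so by Lemma \ref{Canadaypage726} the polynomial $R_\nu$ is irreducible if and only if $\nu\in\{1,2\}$; in particular $R_\nu$, and hence $Q$, is reducible for all $\nu\geq 3$. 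For $P=M_1$ the inner map $g=x^2+x$ is of Artin--Schreier type: since the coefficient of $y^{2^\nu-1}$ in $R_\nu$ equals $1$, a root of an irreducible $R_\nu$ has trace $1$ over $\F_2$, so $R_\nu(x^2+x)$ remains irreducible exactly when $R_\nu$ is, yielding irreducibility precisely for $\nu\in\{1,2\}$. For $P=M_4$ I must instead show $Q$ is reducible also for $\nu\in\{1,2\}$: when $\nu$ is odd one checks $g(\omega)=\omega$ for a primitive cube root $\omega$, whence $R_\nu(\omega)=0$ and $\Phi_3\mid Q$; the single remaining case $\nu=2$ is settled by exhibiting an explicit proper factor.

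The main obstacle is precisely this $P=M_4$ subcase of part iii. There the inner polynomial $g=x(x+1)^3$ has degree $4$ and is not of Artin--Schreier form, so irreducibility of $R_\nu$ is neither inherited by nor incompatible with irreducibility of the composite $R_\nu(g(x))$, and one cannot conclude from $R_\nu$ alone. Deciding reducibility of $R_\nu(g(x))$ comes down to analysing how $g(x)-\beta$ splits over $\F_{2^{\deg R_\nu}}$ for a root $\beta$ of $R_\nu$; for instance one has the two--quadratic factorization $g(x)+\beta=(x^2+ax+(1+a)^3)(x^2+(a+1)x+a^3)$ whenever $\beta=(a^2+a)^3$, but its availability depends on solving $(a^2+a)^3=\beta$ in the field, which is why I resort to the fixed--point observation for odd $\nu$ and to a direct computation for $\nu=2$.
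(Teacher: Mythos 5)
Your proposal is correct, and in parts ii) and iii) it takes a genuinely different route from the paper. For i) you follow the same skeleton as the paper (rewrite $Q=1+x^5(1+x^5)^{2^{\nu}-1}$, invoke Lemma \ref{Canadaypage726} to exclude $\nu\geq 3$, then inspect $\nu=1,2$), but you replace the paper's computational verification at $\nu=2$ by the identification $Q=M_4(x^5)=\Phi_{25}$ together with $\mathrm{ord}_{25}(2)=20=\phi(25)$, which is more self-contained. For ii) the paper's proof is literally ``by direct (Maple) computations''; your closed forms $Q=\sigma(x^{3\cdot 2^{\nu}})$ for $P=M_1$ and $Q=\sigma(x^{5\cdot 2^{\nu}})$ for $P=M_4$ (consistent with the shape $2n=2^{\nu}t$, $t\in\{3,5\}$, appearing in Lemma \ref{factorifQ=sigmx2m}) reduce everything to the classical criterion that $\sigma(x^m)$ is irreducible over $\F_2$ if and only if $m+1$ is prime with $2$ a primitive root modulo $m+1$, and your bounded check is right: $\Phi_7$ fails ($\mathrm{ord}_7(2)=3$), $\Phi_{13}$ and $\Phi_{11}$ survive, and in fact for $\nu\geq 3$ one has $3\cdot 2^{\nu}+1\equiv 5\cdot 2^{\nu}+1\equiv 1\pmod 8$, so $2$ is a quadratic residue and never a primitive root — your argument does not even need the hypothesis $\nu\leq 10$. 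For iii) your substitution $w=1+P$ into $R_{\nu}(y)=y^{2^{\nu}}+y^{2^{\nu}-1}+1$ is the translate (by $y\mapsto y+1$) of the paper's substitution $x\mapsto P$ into $U=1+x(x+1)^{2^{\nu}-1}$; but where the paper then settles four cases by explicit factorizations, you handle $P=M_1$ uniformly by the Artin--Schreier composition criterion (the coefficient of $y^{2^{\nu}-1}$ in $R_{\nu}$ being $1$ forces the root to have trace $1$, which is correct) and $P=M_4$ with $\nu$ odd by the clean observation $g(\omega)=\omega$ for a root $\omega$ of $M_1$, whence $M_1\mid Q$ — this is exactly the factor $M_1$ visible in the paper's explicit factorization for $\nu=1$.

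The one place your write-up is incomplete is the case you yourself flag: $P=M_4$, $\nu=2$, where you promise ``an explicit proper factor'' but never produce it. Your two-quadratics identity indeed cannot apply there: a root $\beta$ of $y^4+y^3+1$ has order $15$ in $\F_{16}^{*}$, hence is not a cube, so $\beta\neq(a^2+a)^3$ for $a\in\F_{16}$. To close the argument you must actually exhibit the factor, as the paper does: $1+x^3(x+1)^9M_4=(x^{12}+x^9+x^8+x^7+x^6+x^4+x^2+x+1)(1+x+x^4)$. Since this is a single finite verification of precisely the kind the paper itself delegates to Maple, this is a cosmetic rather than structural defect; with that factorization inserted, your proof is complete.
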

\begin{proof}
i): One has $Q=1+x^5(x+1)^{2^{\nu}-1} P^{2^{\nu}-1} = 1+x^5(x^5+1)^{2^{\nu}-1}$. The irreducibility of
$Q$  implies that $1+x(x+1)^{2^{\nu}-1}$ is irreducible. So, $\nu \in \{1,2\}$
by Lemma \ref{Canadaypage726}.\\
If $\nu = 1$, then $Q=1+x^5+x^{10} = (x^4+x+1)M_1M_5$ is reducible.\\
If $\nu = 2$, then $Q=1+x^5+x^{10}+x^{15}+x^{20}$ which is irreducible.\\
ii): by direct (Maple) computations.\\
iii): The polynomial $U=1+x(x+1)^{2^{\nu}-1}$ must be irreducible, so $\nu \in \{1,2\}$
by Lemma \ref{Canadaypage726}. Thus, $U \in \{M_1, M_4\}$.\\
If $P=U=M_1$, then $Q=1+x+x^4=1+x(x+1)P$ is irreducible.\\
If $P=M_1$ and $U=M_4$, then $Q=1+x^3(x+1)^3P$ is irreducible.\\
If $P=M_4$ and $U=M_1$, then $Q=1+x(x+1)^3P = (x^6+x^5+x^4+x^2+1)M_1$ is reducible.\\
If $P=U=M_4$, then
$Q=1+x^3(x+1)^9P = (x^{12}+x^9+x^8+x^7+x^6+x^4+x^2+x+1)(1+x+x^4)$ is reducible.
\end{proof}

\begin{lemma} \label{factorifPQ=sigmx2m}
 If $PQ=\sigma(x^{2n})$, then $(2n=8, P=M_1, Q=1+x^3+x^6)$ or
$(2n=24, P=M_4, Q= 1+x^5(x^5+1)^3)$. Moreover, $Q, \overline{Q} \not\in \{\sigma(x^{2g}), \sigma(P^{2g}): g \geq 1\}$ and
$PQ \not\in \{\sigma(x^{2g}), \sigma((x+1)^{2g}) : g \geq 1\}$.
\end{lemma}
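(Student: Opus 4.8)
The plan is to read off the cyclotomic factorisation of $\sigma(x^{2n})$ over $\F_2$, exploiting the extra rigidity that the b.u.p.\ hypothesis imposes on the shape of $Q$.

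First I would collect the two available descriptions of the factors. By Lemma \ref{squarefreeMers}, $\sigma(x^{2n})$ is odd and squarefree; since it equals $PQ$ with $P,Q$ irreducible, it has exactly two distinct irreducible factors, $P \in \mathcal{M}$ and $Q \notin \mathcal{M}$. By Lemma \ref{valuesofd} and the conventions fixed in Section \ref{usefulfacts} I may write $P = 1 + x^{u_1}(x+1)^{v_1}$ and $Q = 1 + x^{u_2}(x+1)^{v_2}P^{w_2}$ with $u_i,v_i,w_2 \geq 1$. The key consequence is a \emph{strict} degree gap: because $1+Q = x^{u_2}(x+1)^{v_2}P^{w_2}$ has degree $u_2+v_2+w_2\deg(P) \geq \deg(P)+2$ and $\deg(1+Q)=\deg(Q)$, we obtain $\deg(Q) \geq \deg(P)+2$. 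So the two irreducible factors of $\sigma(x^{2n})$ have \emph{different} degrees.

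Next I would use $\sigma(x^{2n}) = \prod_{d \mid 2n+1,\ d>1}\Phi_d$ together with the standard fact that over $\F_2$ each $\Phi_d$ splits into $\varphi(d)/\mathrm{ord}_d(2)$ irreducible factors, all of the same degree $\mathrm{ord}_d(2)$. Two irreducible factors of different degrees must then lie in two distinct $\Phi_{d_1},\Phi_{d_2}$; as the total number of irreducible factors is two, each $\Phi_{d_i}$ is itself irreducible and there is no further divisor $d>1$. Hence $2n+1$ has exactly two divisors exceeding $1$, i.e.\ $2n+1 = p^2$ for a prime $p$, with $\{d_1,d_2\}=\{p,p^2\}$. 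Since $\deg(P)<\deg(Q)$, the factor $P$ is the one of smaller degree, namely $P = \Phi_p = \sigma(x^{p-1})$, a Mersenne prime; so Lemma \ref{complete2}-vi forces $p-1 \in \{2,4\}$, that is $p \in \{3,5\}$. For $p=3$ this gives $2n=8$, $P=\Phi_3=M_1$, $Q=\Phi_9=1+x^3+x^6$; for $p=5$ it gives $2n=24$, $P=\Phi_5=M_4$, $Q=\Phi_{25}=1+x^5(x^5+1)^3$. (As a check, $p=3$ is precisely Lemma \ref{complete2}-iv applied to $P=M_1=\sigma((x+1)^2)$; the case $p=5$ is the new one, outside the reach of Lemma \ref{complete2}-iv because $M_4 \neq \sigma((x+1)^{2m})$ for every $m$.)

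The supplementary non-coincidences are then routine degree and order checks. The polynomials $Q,\overline{Q}$ are irreducible of degree $6$ or $20$, whereas $\sigma(x^{2g})$ in those degrees is reducible ($\sigma(x^6)=M_2M_3$, and $\sigma(x^{20})$ has five irreducible factors) while $\sigma(P^{2g})$ has degree $2g\deg(P)\in\{4g,8g\}$, never $6$ or $20$; thus $Q,\overline{Q}\notin\{\sigma(x^{2g}),\sigma(P^{2g})\}$. Finally $PQ$ matches no $\sigma((x+1)^{2g})$: the only sigma-value it equals is its defining one $\sigma(x^{2n})$, and $\sigma(x^{2n})=\sigma((x+1)^{2g})$ would force $2n=2^h-2$ by Lemma \ref{complete2}-vii, which fails for $2n\in\{8,24\}$. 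The crux of the whole argument is the strict degree gap $\deg(Q)\geq\deg(P)+2$ coming from the shape of $Q$: it is exactly what eliminates every equal-degree factorisation of $\sigma(x^{2n})$ (for instance $\sigma(x^6)=M_2M_3$, or the two degree-$8$ factors of $\Phi_{17}$ that would give $2n=16$) and reduces the problem to $2n+1=p^2$, where Lemma \ref{complete2}-vi finishes it.
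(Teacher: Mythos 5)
Your proof is correct, and it takes a genuinely different route from the paper's. The paper argues via reciprocal polynomials: since $\sigma(x^{2n})=PQ$ is self-reciprocal, either $P=Q^{*}$ or both $P=P^{*}$ and $Q=Q^{*}$; the strict inequality $\deg(P)<\deg(Q)$ (which, exactly like your degree gap, comes from $w_2\geq 1$ in $1+Q=x^{u_2}(x+1)^{v_2}P^{w_2}$) rules out the swap, Lemma \ref{complete2}-iii then gives $P\in\{M_1,M_4\}$, the case $P=M_1$ is finished by Canaday's Lemma \ref{complete2}-iv, and the case $P=M_4$ by unspecified ``direct computations'' yielding $Q=1+x^5(x+1)^{2^{\nu}-1}P^{2^{\nu}-1}$, with the irreducibility analysis of Lemma \ref{lesQirreductibles}-i (itself resting on Lemma \ref{Canadaypage726}) pinning down $\nu=2$. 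You instead feed the degree gap $\deg(Q)\geq\deg(P)+2$ into the factorization $\sigma(x^{2n})=\prod_{d\mid 2n+1,\ d>1}\Phi_d$ and the equal-degree structure of each $\Phi_d$ over $\F_2$, forcing $2n+1=p^2$, $P=\Phi_p=\sigma(x^{p-1})$, and then $p\in\{3,5\}$ by Lemma \ref{complete2}-vi. What this buys: a uniform derivation in which both cases $2n=8$ and $2n=24$ fall out simultaneously, with no appeal to Lemma \ref{complete2}-iv (recovered as the $p=3$ instance), to Lemma \ref{lesQirreductibles}, or to machine computation, and a transparent explanation of why equal-degree splittings such as $\sigma(x^6)=M_2M_3$ or $\sigma(x^{16})=\Phi_{17}$ cannot arise; what the paper's route buys is staying entirely within the Canaday toolkit already set up, without invoking cyclotomic polynomials and orders of $2$ modulo $d$. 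Your handling of the ``moreover'' clause also matches the paper in substance (the paper excludes $\sigma(P^{2g})$ by the parity of $\deg(Q)/\deg(P)\in\{3,5\}$ against $2g$, you by direct degree comparison; for $\sigma(x^{2g})$ the paper compares $Q$ with the two candidates of the right degree, you observe those candidates are reducible). Two small points of care: first, your phrase ``degree $2g\deg(P)\in\{4g,8g\}$, never $6$ or $20$'' is loose, since $4g=20$ at $g=5$; what you need, and what your case pairing actually delivers, is $4g\neq 6$ when $P=M_1$ and $8g\neq 20$ when $P=M_4$. Second, read literally the clause $PQ\notin\{\sigma(x^{2g}):g\geq 1\}$ contradicts the hypothesis $PQ=\sigma(x^{2n})$; your charitable reading (only the twisted form $\sigma((x+1)^{2g})$ is at stake, excluded via Lemma \ref{complete2}-vii since $8,24\neq 2^h-2$) is the sensible one, and it is worth noting that the paper's own proof is silent on this clause, establishing it only later, inside the proof of Lemma \ref{lessigmadistincts}-ii, by essentially the same Lemma \ref{complete2}-vii argument you give.
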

\begin{proof}
Since $PQ=\sigma(x^{2n})$, we get $P=P^*$ or $P=Q^*$. But, here, $\deg(P) < \deg(Q)$.
So, $P=P^*$ and $Q=Q^*$.
Since $P$ is a Mersenne prime and $P=P^*$, one has $P=M_1$ or $P=M_4$.
If $P=M_1$,
then by Lemma \ref{complete2}-iv), $Q= 1+x^3(x+1)P=1+x^3+x^6$.
If $P=M_4$, then direct computations give $Q=1+x^5(x+1)^{2^{\nu} -1}P^{2^{\nu} -1}$. Since $Q$ is irreducible, we get from Lemma \ref{lesQirreductibles}-i), $\nu = 2$ and $Q = 1+x^5(x^5+1)^3$. Thus,
$Q \not\in \{\sigma(x^6), \sigma((x+1)^6)\}$ (resp. $Q \not\in \{\sigma(x^{20}), \sigma((x+1)^{20})\}$ if $P =M_1$ (resp. if $P =M_4$).
We also remark that $\displaystyle{\frac{\deg(Q)}{\deg(P)} \in \{3,5\}}$. So, $Q, \overline{Q} \not\in \{\sigma(P^{2g}): g\geq 1\}$.
\end{proof}
\begin{lemma} \label{factorifQ=sigmx2m}
If $Q=\sigma(x^{2n})$ with $n \geq 1$, then for some $\nu \geq 1$, $Q = 1+x(x+1)^{2^{\nu}-1} {M_1}^{2^{\nu}}$ or $Q= 1+x(x+1)^{2^{\nu}-1} {M_4}^{2^{\nu}}$. Moreover, $Q, \overline{Q} \not\in \{\sigma(P^{2g}): g \geq 1\}$ and
$PQ \not\in \{\sigma(x^{2g}), \sigma((x+1)^{2g}) : g \geq 1\}$.
\end{lemma}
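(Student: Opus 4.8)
The plan is to play the two descriptions of $Q$ against each other: the hypothesis $Q=\sigma(x^{2n})$ on one hand, and $Q=1+x^{u_2}(x+1)^{v_2}P^{w_2}$ coming from Lemma~\ref{valuesofd} on the other. First I would compute the odd-part factorisation of $1+Q$. From $Q=\sigma(x^{2n})$,
\[ 1+Q=x+x^2+\cdots+x^{2n}=x\,\sigma(x^{2n-1}). \]
Writing $2n=2^{\gamma}m$ with $m$ odd and $\gamma\geq 1$, the Frobenius identity $x^{2n}+1=(x^m+1)^{2^{\gamma}}$ together with $x^m+1=(x+1)\sigma(x^{m-1})$ yields
\[ \sigma(x^{2n-1})=\frac{x^{2n}+1}{x+1}=(x+1)^{2^{\gamma}-1}\,\sigma(x^{m-1})^{2^{\gamma}}, \]
so that $1+Q=x\,(x+1)^{2^{\gamma}-1}\,\sigma(x^{m-1})^{2^{\gamma}}$.

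Next I would read off the prime powers. Since $\sigma(x^{m-1})$ has constant term $1$ and value $m\equiv 1$ at $x=1$, neither $x$ nor $x+1$ divides it, so the odd part of $1+Q$ is exactly $\sigma(x^{m-1})^{2^{\gamma}}$. Comparing with $1+Q=x^{u_2}(x+1)^{v_2}P^{w_2}$, whose only odd prime factor is $P$, forces $u_2=1$, $v_2=2^{\gamma}-1$ and $\sigma(x^{m-1})^{2^{\gamma}}=P^{w_2}$; as $P$ is irreducible this gives $\sigma(x^{m-1})=P^{s}$. Now $m-1$ is even, so $\sigma(x^{m-1})=\sigma(x^{2k})$ and Lemma~\ref{complete2}-i) forces $s\in\{0,1\}$. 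The value $s=0$ gives $m=1$ and $Q=1+x(x+1)^{2^{\gamma}-1}$, which by Lemma~\ref{Canadaypage726} is either a Mersenne prime (for $\gamma\in\{1,2\}$) or reducible, both impossible since $Q$ is irreducible and $Q\notin\mathcal{M}$. Hence $s=1$, i.e. $P=\sigma(x^{m-1})$ is a Mersenne prime of the form $\sigma(x^{\text{even}})$, and Lemma~\ref{complete2}-vi) yields $m-1\in\{2,4\}$, that is $P\in\{M_1,M_4\}$. Putting $\nu=\gamma$ gives exactly $Q=1+x(x+1)^{2^{\nu}-1}M_1^{2^{\nu}}$ or $Q=1+x(x+1)^{2^{\nu}-1}M_4^{2^{\nu}}$, as claimed.

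For the \emph{moreover} I would compare one well-chosen invariant on each side. From the displayed form, $P^{2^{\nu}}\,\|\,(1+Q)$ with $2^{\nu}\geq 2$; if instead $Q=\sigma(P^{2g})$, then $1+Q=P\,\sigma(P^{2g-1})$ and $P\nmid\sigma(P^{2g-1})$, so $P\,\|\,(1+Q)$, a contradiction. Applying $x\mapsto x+1$ treats $\overline{Q}$ identically, with $v_P(1+\overline{Q})\in\{0,2^{\nu}\}$ (never $1$), so $\overline{Q}\neq\sigma(P^{2g})$ as well. For the product I would use a single coefficient: $\sigma(x^{2g})$ has all coefficients equal to $1$, in particular $x$-coefficient $1$, whereas $P(0)=Q(0)=1$ and $[x^1]P=[x^1]Q=1$ give $[x^1](PQ)=0$, so $PQ\neq\sigma(x^{2g})$. (Alternatively $Q=\sigma(x^{2n})\mid\sigma(x^{2g})$ forces $(2n+1)\mid(2g+1)$ with quotient $P=\sigma(x^{2g})/Q$ of degree $(2n+1)(k-1)\geq 4n+2>\deg Q\geq\deg P$.) Finally, for $PQ\neq\sigma((x+1)^{2g})$ I would note that $P$ and $Q=\sigma(x^{2n})$ are palindromic, hence so is $PQ$; thus $\sigma((x+1)^{2g})$ would be palindromic, which (the field case $2g=2^{h}-2$) forces $\sigma((x+1)^{2g})=\sigma(x^{2g})$ by Lemma~\ref{complete2}-vii), reducing to the case already excluded.

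The bulk of the work is the first two steps: obtaining the clean factorisation of $1+Q$ and then matching prime powers so that Lemma~\ref{complete2}-i),vi) pin down $P$ exactly as $M_1$ or $M_4$. I expect the most delicate point to be the last part of the \emph{moreover}, the case $PQ=\sigma((x+1)^{2g})$, where a direct coefficient comparison is not decisive and one must pass through the palindromy of $P$ and $Q$ (or an equivalent factor-count argument) to push the equation back to the $\sigma(x^{2g})$ case.
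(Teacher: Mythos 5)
Your argument is correct, and its first half is essentially a fleshed-out version of what the paper compresses into the words ``by direct computations'': the paper simply asserts that $2n=2^{\nu}t$, $t\in\{3,5\}$, $P=\sigma(x^{t-1})$ and $Q=1+x(x+1)^{2^{\nu}-1}P^{2^{\nu}}$, while your chain --- the factorisation $1+Q=x(x+1)^{2^{\gamma}-1}\sigma(x^{m-1})^{2^{\gamma}}$, matching against $1+Q=x^{u_2}(x+1)^{v_2}P^{w_2}$, Lemma~\ref{complete2}-i) to get $\sigma(x^{m-1})=P^{s}$ with $s\in\{0,1\}$, Lemma~\ref{Canadaypage726} together with $Q\notin{\mathcal{M}}$ to kill $s=0$, and Lemma~\ref{complete2}-vi) to pin $P\in\{M_1,M_4\}$ --- is exactly that omitted computation, done correctly (and you wisely avoid the paper's inconsistent claim $w_2\in\{0,1\}$ in Lemma~\ref{valuesofd}-iii)). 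On the ``moreover'' clauses you genuinely diverge. The paper excludes $Q,\overline{Q}=\sigma(P^{2g})$ by citing Lemma~\ref{lessigmadistincts}-i) (a forward reference, proved there by comparing factorisations), whereas you read off $P^{2^{\nu}}\|1+Q$ with $2^{\nu}\geq 2$ against $P\|1+\sigma(P^{2g})$, which is cleaner and self-contained. The paper excludes $PQ=\sigma(x^{2g})$ by playing $P^{2^{\nu}}\|1+Q$ against the valuations $P\|1+Q$ or $P^{3}\|1+Q$ forced by Lemma~\ref{factorifPQ=sigmx2m}; your observation that the coefficient of $x$ in $PQ$ is $1+1=0$ while every coefficient of $\sigma(x^{2g})$ is $1$ is a cheaper substitute that bypasses that lemma entirely. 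What each buys: the paper's valuation argument transfers verbatim under $x\mapsto x+1$ and so disposes of $\sigma((x+1)^{2g})$ silently, whereas your coefficient trick does not transfer (the coefficient of $x$ in $M_5=\overline{M_4}$ is $0$), which is why you need the separate palindromy argument.

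That last step is the one place needing repair, as you yourself suspected: Lemma~\ref{complete2}-vii) is cited in the wrong direction. It proves $\sigma(x^{n})=\sigma((x+1)^{n})\Rightarrow n=2^{h}-2$, while you need ``$\sigma((x+1)^{2g})$ self-reciprocal $\Rightarrow 2g=2^{h}-2\Rightarrow\sigma((x+1)^{2g})=\sigma(x^{2g})$''. Both implications are true but must be supplied: writing $V=\sigma((x+1)^{2g})=\bigl((x+1)^{2g+1}+1\bigr)/x$, one computes $V^{*}=x^{2g}V(1/x)=(x+1)^{2g+1}+x^{2g+1}$, so $V=V^{*}$ iff $(x+1)^{2g+2}=x^{2g+2}+1$, i.e.\ iff $2g+2$ is a power of $2$; and in that case $(x+1)^{2g+1}=\sigma(x^{2g+1})$, whence $V=\sigma(x^{2g})$, reducing to the case you already excluded. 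Alternatively, apply $x\mapsto x+1$ throughout: $PQ=\sigma((x+1)^{2g})$ gives $\overline{P}\,\overline{Q}=\sigma(x^{2g})$ with $\overline{P}$ a Mersenne prime, $\overline{Q}$ irreducible non-Mersenne and $\deg\overline{P}<\deg\overline{Q}$, so the argument of Lemma~\ref{factorifPQ=sigmx2m} forces $\overline{P}\|1+\overline{Q}$ or $\overline{P}^{3}\|1+\overline{Q}$, contradicting $\overline{P}^{2^{\nu}}\|1+\overline{Q}$. With either patch your proof is complete.
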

\begin{proof}
By direct computations, one has, for some $\nu \geq 1$: $2n=2^{\nu}t$, $t \in \{3,5\}$, $P=\sigma(x^{t-1})$
and $Q=1+x(x+1)^{2^{\nu}-1}P^{2^{\nu}}$. Hence, $P^{2^{\nu}} \| 1+Q$. \\
If $PQ$ is of the form $\sigma(x^{2g})$, then $P \| 1+Q$ or $P^3 \| 1+Q$ (Lemma \ref{factorifPQ=sigmx2m}), which is impossible.\\
Since $Q = \sigma(x^{2m})$, Lemma \ref{lessigmadistincts}-i) implies that $Q \not\in \{\sigma(P^{2m}), \sigma({\overline{P}}^{\ 2m})\}$.
\end{proof}
\begin{lemma} \label{factorifQ=sigmP2m}
If $Q=\sigma(P^{2n})$, then $2n \leq 4$, $P =M_1$, so that $Q \in \{1+x(x+1)M_1, 1+x^3(x+1)^3M_1\}$. Moreover,
$Q, PQ \not\in \{\sigma(x^{2g}), \sigma((x+1)^{2g}): g \geq 1\}$.
\end{lemma}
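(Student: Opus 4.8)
The plan is to first pin down the shape of $Q$, then force $P=M_1$, and finally read off the two admissible polynomials together with the ``Moreover'' clause.

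\smallskip
\emph{Shape of $Q$.} Since $Q=\sigma(P^{2n})$ divides $A$, I would invoke Lemma~\ref{sigmastar2P} and its proof, which give $2n=2^{\gamma}$, $w_2=1$ and $\sigma(P^{2n-1})=(1+P)^{2^{\gamma}-1}$. Hence $1+Q=P+P^2+\cdots+P^{2n}=P\,\sigma(P^{2n-1})=P(1+P)^{2^{\gamma}-1}$, so that $Q=U(P)$ with $U(y)=1+y(y+1)^{2^{\gamma}-1}$. Because $Q$ is irreducible and $P$ is nonconstant, a nontrivial factorization $U=U_1U_2$ would give $Q=U_1(P)U_2(P)$ with both factors nonconstant; thus $U$ is irreducible, and Lemma~\ref{Canadaypage726} forces $\gamma\in\{1,2\}$. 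In particular $2n=2^{\gamma}\le 4$ and $U\in\{M_1,M_4\}$, i.e.\ $Q=\sigma(P^{2})$ or $Q=\sigma(P^{4})$.

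\smallskip
\emph{A clean model: $\gamma=1$.} Here $Q=\sigma(P^{2})=P^{2}+P+1=M_1(P)$, and I would reduce modulo $M_1$. For a Mersenne prime $P\ne M_1$, the residue $P\bmod M_1$ is not $0$ (else the irreducible $P$ equals $M_1$) and not $1$ (else $M_1$ divides $1+P=x^{u_1}(x+1)^{v_1}$, impossible as $M_1$ is coprime to $x(x+1)$); so $P\bmod M_1\in\{x,x+1\}$ is a root of $M_1$, whence $M_1\mid M_1(P)=Q$. As $\deg Q=2\deg P>\deg M_1$, this contradicts the irreducibility of $Q$, so $\gamma=1$ already forces $P=M_1$ and $Q=\sigma(M_1^{2})=1+x(x+1)M_1$.

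\smallskip
\emph{The obstacle: $\gamma=2$ and locating $P$.} The residue trick fails for $Q=\sigma(P^{4})=M_4(P)$, so I would instead extract $P$ from the identity $\sigma^{**}(A)=A$. Passing to odd parts, $P$ cannot come from $\sigma^{**}(P^c)$ (Lemma~\ref{aboutsigmastar2}-ii)), while $\sigma^{**}(Q^d)=(1+Q)^d$ only supplies the power $P^{d}$ (as $w_2=1$); hence $P$ must divide one of the odd squarefree pieces $\sigma(x^{2r})$, $\sigma(x^{u-1})$, $\sigma((x+1)^{2s})$, $\sigma((x+1)^{v-1})$ of $\sigma^{**}(x^a)\sigma^{**}((x+1)^b)$. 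By Lemmas~\ref{lesodddivisors} and \ref{squarefreeMers} each such piece lies in $\{P,Q,PQ\}$; the options $Q$ and $PQ$ belong to the companion cases of Lemmas~\ref{factorifQ=sigmx2m} and \ref{factorifPQ=sigmx2m}, so the piece carrying $P$ equals $P$ itself. Then $P=\sigma(x^{2r})$ or $P=\sigma((x+1)^{2s})$, and Lemma~\ref{complete2}-vi) gives $P\in\{M_1,M_4\}$. (In the degenerate subcase where the $x$- and $(x+1)$-parts supply no $P$, both $\sigma^{**}(x^a)$ and $\sigma^{**}((x+1)^b)$ split, contradicting Lemma~\ref{aoubsup3}.) This bookkeeping, and the care needed to quarantine the overlapping possibilities $\sigma(x^{2r})\in\{Q,PQ\}$, is the delicate point of the proof.

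\smallskip
\emph{Conclusion and the ``Moreover'' clause.} Once $P\in\{M_1,M_4\}$ and $Q=1+P(1+P)^{2^{\gamma}-1}$ is irreducible, Lemma~\ref{lesQirreductibles}-iii) yields at once $P=M_1$ and $\gamma\in\{1,2\}$, hence $2n\le 4$ and $Q\in\{1+x(x+1)M_1,\,1+x^3(x+1)^3M_1\}$. For the last assertion I would note $1+Q=M_1\,(x(x+1))^{2^{\gamma}-1}$, so $M_1\,\|\,1+Q$. If $Q$ were $\sigma(x^{2g})$ or $\sigma((x+1)^{2g})$, Lemma~\ref{factorifQ=sigmx2m} would force $P^{2^{\nu}}\,\|\,1+Q$ with $\nu\ge1$, contradicting $P\,\|\,1+Q$; and if $PQ$ were $\sigma(x^{2g})$ or $\sigma((x+1)^{2g})$, Lemma~\ref{factorifPQ=sigmx2m} would impose $Q=1+x^3+x^6$ (or its $x\mapsto x+1$ image), which is neither of the two polynomials just found. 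This gives $Q,PQ\notin\{\sigma(x^{2g}),\sigma((x+1)^{2g}):g\ge1\}$ and completes the argument.
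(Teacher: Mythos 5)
Your skeleton is the paper's own (the paper's proof is essentially two lines: ``by direct computations'' one gets $2n=2^{\nu}$, $Q=1+P(1+P)^{2^{\nu}-1}$, then Lemma~\ref{lesQirreductibles}-iii) forces $\nu\in\{1,2\}$ and $P=M_1$), and your Step~1, your use of Lemma~\ref{Canadaypage726} via composition, and your conclusion reproduce that route faithfully while making the hidden steps explicit. But there is a genuine gap at exactly the point you yourself flag as delicate. From ``each nontrivial odd piece equals $P$'' you get $P=\sigma(x^{2r})$ or $P=\sigma((x+1)^{2s})$, and Lemma~\ref{complete2}-vi) then gives $2r,2s\in\{2,4\}$ — hence $P\in\{M_1,M_4,M_5\}$, \emph{not} $P\in\{M_1,M_4\}$: you have overlooked $\sigma((x+1)^4)=\overline{M_4}=M_5$. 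Your $\F_4$-residue argument does dispose of $M_5$ when $\gamma=1$ (it works for every Mersenne prime, a genuine strengthening over the paper's case-by-case check), but for $\gamma=2$ you conclude by Lemma~\ref{lesQirreductibles}-iii), whose hypothesis is $P\in\{M_1,M_4\}$; the case $\gamma=2$, $P=M_5$, $Q=1+M_5x^9(x+1)^3$ is covered by nothing you wrote. The fix is one sentence: conjugation $x\mapsto x+1$ is a ring automorphism preserving irreducibility, so $Q$ is irreducible iff $\overline{Q}=1+M_4x^3(x+1)^9$ is, and the proof of Lemma~\ref{lesQirreductibles}-iii) exhibits a factorization of the latter; alternatively invoke the paper's standing convention ``replace $A$ by $\overline{A}$ if necessary''. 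As written, though, the $M_5$ branch is a hole in the only step that distinguishes your argument from the paper's unexplained assertion that $P\in\{M_1,M_4\}$.

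A secondary blemish: in the degenerate subcase you cite Lemma~\ref{aoubsup3}, but that lemma is stated and proved only for $\omega(A)=3$ (its proof uses $\sigma^{**}(P^c)=P^c$), and for $\omega(A)=4$ the paper asserts only the weaker ``$a\geq 3$ or $b\geq 3$'', which does not exclude, say, $a=2^{\alpha}-1$, $b=2^{\beta}-1$ with both exponents at least $3$. The fact you need is true and has a one-line proof you should substitute: if both parts split, then $\sigma^{**}(P^c)\,\sigma^{**}(Q^d)=x^{a-b}(x+1)^{b-a}P^cQ^d$, so the $x$- and $(x+1)$-adic valuations of the left side sum to $(a-b)+(b-a)=0$, whereas $1+P=x^{u_1}(x+1)^{v_1}$ always divides $\sigma^{**}(P^c)$ and contributes positively to both — contradiction. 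The rest of your bookkeeping is sound and non-circular: Lemmas~\ref{factorifPQ=sigmx2m} and \ref{factorifQ=sigmx2m} precede this one and their ``Moreover'' clauses legitimately quarantine the options $Q$ and $PQ$ (for the option $Q$ you could shortcut via Lemma~\ref{lessigmadistincts}-i), which gives $\sigma(P^{2m})\neq\sigma(x^{2n}),\sigma((x+1)^{2n})$ directly, and the same lemma also yields the $Q$-part of the ``Moreover'' clause instantly); your valuation comparison $P\,\|\,1+Q$ versus $P^{2^{\nu}}\,\|\,1+Q$ and the degree check against $1+x^3+x^6$ are a valid replacement for the paper's ``direct computations''.
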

\begin{proof}
By direct computations, one has: $2n=2^{\nu}, Q=1+P(1+P)^{2^{\nu}-1}$, for some $\nu \geq 1$. Since $Q$ is irreducible, we get $\nu \in\{1,2\}$ and $P=M_1$. Again, by direct computations, $Q, PQ \not\in \{\sigma(x^{2g}), \sigma((x+1)^{2g}): g \geq 1\}$.
\end{proof}

\begin{lemma} \label{lessigmadistincts}
i) For any $m,n \in \N^*$, $\sigma(P^{2m}) \not\in \{\sigma(x^{2n}), \sigma((x+1)^{2n})\}$.\\
ii) If $\sigma(x^{2n}) = \sigma((x+1)^{2n})$, then  $\sigma(x^{2n}) \not\in \{Q, PQ\}$.
\end{lemma}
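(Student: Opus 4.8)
The plan is to treat the two parts with different tools: a $2$-adic valuation argument for (i), and a complete-splitting factorization for (ii). First I would reduce (i) to a single inequality. Since $\sigma((x+1)^{2n}) = \overline{\sigma(x^{2n})}$ and $\overline{\sigma(P^{2m})} = \sigma(\overline{P}^{\,2m})$ with $\overline{P}$ again a Mersenne prime, applying the bar automorphism shows that both stated inequalities follow once we know $\sigma(P^{2m}) \ne \sigma(x^{2n})$ for every Mersenne prime $P = 1+x^{u_1}(x+1)^{v_1}$ (with $u_1,v_1 \ge 1$).

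The engine for this single inequality is the identity
\[
\sigma(R^{2k})+1 = \frac{R\,(R^k+1)^2}{R+1}, \qquad R \in \F_2[x],
\]
together with the remark that if $\pi \in \{x,x+1\}$ divides $R+1$ then $v_\pi(R^k+1) = 2^{v_2(k)}\,v_\pi(R+1)$, where $v_\pi$ denotes multiplicity and $v_2$ the $2$-adic valuation of an integer. Reading off multiplicities I would record, for $F=\sigma(x^{2n})$, the pair $(v_x,v_{x+1})(F+1) = (1,\ 2^{v_2(n)+1}-1)$ (the first entry directly from $F+1 = x\,\sigma(x^{2n-1})$ with $\sigma(x^{2n-1})(0)=1$), and for $G=\sigma(P^{2m})$ the pair $(v_x,v_{x+1})(G+1) = \big((2^{v_2(m)+1}-1)u_1,\ (2^{v_2(m)+1}-1)v_1\big)$. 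Supposing $F=G$, the $x$-entry forces $v_2(m)=0$ and $u_1=1$, and then the $(x+1)$-entry gives $v_1 = 2^{v_2(n)+1}-1$, so $\deg P = 1+v_1 = 2^{v_2(n)+1}$. Comparing degrees, $2n = 2m\deg P$ yields $n = m\cdot 2^{v_2(n)+1}$; since $m$ is odd this reads $v_2(n)=v_2(n)+1$, the desired contradiction.

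For (ii), Lemma \ref{complete2}(vii) turns the hypothesis $\sigma(x^{2n})=\sigma((x+1)^{2n})$ into $2n = 2^h-2$ for some $h\ge 2$. The crucial observation is the complete factorization
\[
\sigma(x^{2^h-2}) = \frac{x^{2^h}+x}{x(x+1)} = \prod_{\alpha \in \F_{2^h}\setminus\F_2}(x-\alpha),
\]
so the monic irreducible factors of $\sigma(x^{2^h-2})$ over $\F_2$ are exactly the irreducibles whose degree $d$ divides $h$ with $d\ge 2$, each occurring once (squarefree, as in Lemma \ref{squarefreeMers}). Writing $I(d)$ for the number of monic degree-$d$ irreducibles over $\F_2$, this gives $\omega(\sigma(x^{2^h-2})) = \sum_{d\mid h,\,d\ge 2} I(d)$. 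Since $I(2)=1$, $I(3)=2$ and $I(d)\ge 2$ for every $d\ge 3$, the count equals $1$ exactly when $h=2$ and equals $2$ exactly when $h=3$. Thus if $\sigma(x^{2n})=Q$ then $\omega=1$, forcing $h=2$ and $Q=\sigma(x^2)=M_1$; if $\sigma(x^{2n})=PQ$ then $\omega=2$, forcing $h=3$ and $\sigma(x^6)=M_2M_3$, hence $\{P,Q\}=\{M_2,M_3\}$. In either situation $Q$ would be a Mersenne prime, contradicting the standing hypothesis $Q\notin{\mathcal M}$ of this subsection; therefore $\sigma(x^{2n})\notin\{Q,PQ\}$.

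The main obstacle is part (i): unlike $\sigma(x^{2n})$, the polynomial $\sigma(P^{2m})$ has no transparent factorization, so the comparison must be made through an invariant robust enough to detect both the exponent $m$ and the shape of $P$. The $2$-adic valuations of $x$ and $x+1$ in $\sigma(\cdot)+1$ supply exactly such an invariant, and the only real care is the bookkeeping that converts the two multiplicity equations together with the degree equation into the parity contradiction $v_2(n)=v_2(n)+1$. Part (ii) is then routine once one recognizes the splitting of $x^{2^h}+x$ and uses the elementary bound $I(d)\ge 2$ for $d\ge 3$.
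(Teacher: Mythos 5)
Your proof is correct, and in both parts it takes a genuinely different route from the paper's. For (i) the paper also runs a multiplicity comparison, but at the prime $P$ itself: writing $2n=2^{\alpha}u$ and $2m=2^{\beta}v$ with $u,v$ odd, the hypothetical equality $\sigma(P^{2m})=\sigma(x^{2n})$ becomes $P(P+1)^{2^{\beta}-1}(\sigma(P^{v-1}))^{2^{\beta}}=x(x+1)^{2^{\alpha}-1}(\sigma(x^{u-1}))^{2^{\alpha}}$, and since $P\nmid\sigma(P^{v-1})$ the $P$-valuation of the left-hand side is exactly $1$ while on the right it is a positive multiple of $2^{\alpha}\geq 2$; that kills the case in one stroke, with no degree count. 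Your version, via $v_x$ and $v_{x+1}$ of $\sigma(\cdot)+1$ plus a degree comparison, is longer but sound: the identity $v_{\pi}(R^k+1)=2^{v_2(k)}v_{\pi}(R+1)$ for $\pi\mid R+1$ checks out (since $\sigma(R^{t-1})\equiv t\equiv 1\bmod \pi$ for $t$ odd), the recorded valuation pairs are right, and the parity contradiction $v_2(n)=v_2(n)+1$ closes it; moreover your explicit reduction by the bar automorphism to the single inequality $\sigma(P^{2m})\neq\sigma(x^{2n})$ is a step the paper leaves tacit. For (ii) the divergence is more substantial: the paper pins down the possible shapes of $Q$ and $PQ$ through Lemmas \ref{factorifQ=sigmx2m} and \ref{factorifPQ=sigmx2m} (which rest on direct computations and Lemma \ref{Canadaypage726}, and the first of which itself invokes part (i) of the present lemma), then intersects with $2n=2^{h}-2$ to land on the reducible candidates $M_2M_3$ and $(x^4+x+1)M_1M_5$. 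You instead factor $\sigma(x^{2^{h}-2})=(x^{2^{h}}+x)/(x(x+1))$ over $\F_{2^h}$ and count monic irreducibles of degree $d\mid h$, $d\geq 2$, so that $\omega=1$ forces $h=2$ and $\omega=2$ forces $h=3$; your contradiction is that $Q$ would lie in $\{M_1,M_2,M_3\}\subset{\mathcal{M}}$, against the standing hypothesis $Q\notin{\mathcal{M}}$ of Section \ref{caseQnonmers}, whereas the paper contradicts the irreducibility of $Q$ — both are legitimate in context. What your route buys is self-containedness: part (ii) then needs only Lemma \ref{complete2}-vii) and elementary counting, and is decoupled from part (i) and from the two factorization lemmas; what the paper's route buys is economy, since it reuses machinery it needs anyway in Sections \ref{casePQ=sigmx2r} and \ref{caseQ=sigmx2r}.
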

\begin{proof}
i): Put $2n-1 = 2^{\alpha}u-1$ and $2m-1 = 2^{\beta}v-1$, with $\alpha, \beta \geq 1$. \\
If $\sigma(P^{2m}) = \sigma(x^{2n})$, then $P(1+P+\cdots +P^{2m-1}) = x(1+x+\cdots+x^{2n-1})$.
Thus, $P(P+1)^{2^{\beta} - 1} (1+P+\cdots + P^{v-1})^{2^{\beta}} = x(x+1)^{2^{\alpha} - 1} (1+x+\cdots + x^{u-1})^{2^{\alpha}}.$ Hence, $u \geq 3$ and $2^{\alpha} = 1$, which is impossible.\\
ii): One has $2n = 2^{h} - 2$, for some $h \geq 1$ (Lemma \ref{complete2}-vii)). If $Q = \sigma(x^{2n})$, then by Lemma \ref{factorifQ=sigmx2m}, $2^{h} - 2=2n=2^{\nu} t$, with $t\in \{3,5\}$.
Therefore, $\nu =1$, $t= 2^{h-1} - 1$, $h=3=t$, $2n=6$ and $Q = M_2M_3$ is reducible.\\
If $PQ = \sigma(x^{2n})$, then by Lemma \ref{factorifPQ=sigmx2m}, one has: ($2n=8$, $P=M_1$ and $Q=1+x^3+x^6$) or
($2n=5\cdot 2^{\nu} +4$, $P=M_4$ and
$Q=1+x^5(x+1)^{2^{\nu}-1}P^{2^{\nu}-1}$). Thus, $2^{h} - 2= 2n=5\cdot 2^{\nu} +4$, $\nu=1$, $h=4$ and
$Q=1+x^5(x+1)P=(x^4+x+1)M_1M_5$ is reducible.
\end{proof}
Without loss of generality, by Lemmas  \ref{factorifPQ=sigmx2m}, \ref{factorifQ=sigmx2m} and \ref{factorifQ=sigmP2m}, it suffices to consider the following three cases:\\
$$PQ=\sigma(x^{2m}), \ Q=\sigma(x^{2m}), \ Q=\sigma(P^{2m}), \text{for some $m\geq 1$}.$$
In each case, we distinguish: $\text{($a$, $b$ both even), ($a$ even, $b$ odd), ($a$, $b$ both odd).}$
We shall compare $a$, $b$, $c$ or $d$ with all possible values of the exponents of $x$, $x+1$,  of $P$ or of $Q$,
in $\sigma^{**}(A)$.

According to Corollary \ref{expressigmastar} and Lemma \ref{valuesofd}, we get Lemma \ref{Petsigmastar2P} from  Relations in (\ref{lessigmastaralla}), in (\ref{lessigmastarallb}) and in (\ref{lessigmastar-add3}).
\begin{lemma} \label{Petsigmastar2P}~\\
i) The polynomial $P$ does not divide $\sigma^{**}(P^c)$, but it may divide $\sigma^{**}(Q^d)$.\\
ii) One has: $u_2d \leq a,\ v_2d \leq b,\ w_2d \leq c$, so that $d \leq \min(a,b,c)$.
\end{lemma}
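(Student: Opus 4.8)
The plan is to exploit the explicit form $\sigma^{**}(Q^d) = (1+Q)^d$ and then read off all the needed valuation information, everything else being a comparison of exponents in the identity $\sigma^{**}(A) = A$.

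For part i), the first claim that $P \nmid \sigma^{**}(P^c)$ is exactly Lemma \ref{aboutsigmastar2}-ii) applied to $T = P$, so nothing new is required. For the second claim, I would first invoke Lemma \ref{valuesofd}-ii), which guarantees $d \in \{2, 2^{\delta}-1 : \delta \geq 1\}$; by the corresponding line of Corollary \ref{expressigmastar}, recorded as Relation (\ref{lessigmastar-add3}), this forces
$$\sigma^{**}(Q^d) = (1+Q)^d = x^{u_2 d}(x+1)^{v_2 d}P^{w_2 d},$$
where I have substituted the normal form $Q = 1+x^{u_2}(x+1)^{v_2}P^{w_2}$ from Section \ref{usefulfacts}, so that $1+Q = x^{u_2}(x+1)^{v_2}P^{w_2}$. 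Since $w_2 \geq 1$, the visible factor $P^{w_2 d}$ shows that $P \mid \sigma^{**}(Q^d)$, which is the remaining half of i).

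For part ii), I would compare the $x$-adic, $(x+1)$-adic and $P$-adic valuations on both sides of the multiplicative identity
$$\sigma^{**}(x^a)\cdot\sigma^{**}((x+1)^b)\cdot\sigma^{**}(P^c)\cdot\sigma^{**}(Q^d) = A = x^a(x+1)^bP^cQ^d.$$
The displayed expression for $\sigma^{**}(Q^d)$ already contributes $x^{u_2 d}$, $(x+1)^{v_2 d}$ and $P^{w_2 d}$, while the three remaining factors are honest polynomials and so contribute only nonnegative powers of each of $x$, $x+1$ and $P$. Comparing the total power of $x$ (resp. of $x+1$) on the left with its value $a$ (resp. $b$) on the right yields $u_2 d \leq a$ and $v_2 d \leq b$. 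For the power of $P$, I would use part i): since $\sigma^{**}(P^c)$ contributes no factor $P$, the only forced contribution to the $P$-valuation on the left is $P^{w_2 d}$ coming from $\sigma^{**}(Q^d)$, so comparing with the total power $c$ of $P$ in $A$ gives $w_2 d \leq c$. Finally, because $u_2, v_2, w_2 \geq 1$, each of these three inequalities dominates $d$, whence $d \leq \min(a,b,c)$.

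The argument is essentially bookkeeping, so I do not expect a serious obstacle; the only point requiring care is the $P$-adic comparison, where one must check that neither $\sigma^{**}(x^a)$ nor $\sigma^{**}((x+1)^b)$ can lower the $P$-valuation on the left. Since all factors are polynomials their $P$-valuations are nonnegative and can only increase the left-hand side, so the lower bound $w_2 d \leq c$ is safe. The one genuinely nontrivial input is the reduction $\sigma^{**}(Q^d)=(1+Q)^d$, which itself rests on Lemma \ref{valuesofd}-ii).
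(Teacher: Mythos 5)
Your proposal is correct and follows exactly the route the paper intends: the paper derives Lemma \ref{Petsigmastar2P} without a written proof, citing Lemma \ref{valuesofd}-ii) to get $\sigma^{**}(Q^d)=(1+Q)^d=x^{u_2d}(x+1)^{v_2d}P^{w_2d}$ via Relation (\ref{lessigmastar-add3}), Lemma \ref{aboutsigmastar2}-ii) for $P\nmid\sigma^{**}(P^c)$, and the same exponent comparison in $\sigma^{**}(A)=A$. Your bookkeeping (including the observation that the other factors can only contribute nonnegatively to each valuation, and that $w_2\geq 1$ in the setting of Section \ref{caseQnonmers}) fills in precisely the details the paper leaves implicit.
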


\subsubsection{Case where $PQ = \sigma(x^{2m})$, for some $m \geq 1$} \label{casePQ=sigmx2r}
We get, from Lemma \ref{factorifPQ=sigmx2m}, $Q, \overline{Q} \not\in \{\sigma(x^{2g}), \sigma(P^{2g}): g\geq 1\}$,
$(2m=8, P=M_1$ and $Q=1+x^3+x^6 = 1+x^3(x+1)P)$ or $(2m=24, P=M_4$ and $Q=1+x^5(x^5+1)^3 = 1+x^5(x+1)^3P^3)$.\\
We refer to Relations in (\ref{lessigmastaralla}), in (\ref{lessigmastarallb}) and in (\ref{lessigmastar-add3}).
\begin{lemma} \label{valeursdec}
On has: $c=2$ or $c=2^{\gamma}-1$, $c \leq \min(a,b)$ and $d=1$.
\end{lemma}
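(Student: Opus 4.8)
The plan is to read off $c$ and $d$ by tracking, for each irreducible factor of $A$, the only places it can be produced in the identity $\sigma^{**}(x^a)\,\sigma^{**}((x+1)^b)\,\sigma^{**}(P^c)\,\sigma^{**}(Q^d)=A$, and then matching multiplicities; I write $e_T(F)$ for the exponent to which an irreducible $T$ divides $F$. Throughout I use that $P$ is Mersenne with $1+P=x^{u_1}(x+1)^{v_1}$, $u_1,v_1\ge1$, that $P,Q$ are the only odd irreducible divisors of $A$, and the exclusions of Lemma \ref{factorifPQ=sigmx2m}: $Q,\overline{Q}\notin\{\sigma(x^{2g}),\sigma(P^{2g}):g\ge1\}$ and $PQ\notin\{\sigma((x+1)^{2g}):g\ge1\}$.

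First I would settle $c$. If $c\notin\{2,2^{\gamma}-1\}$, then by Corollary \ref{splitcriteria}\,ii) the factor $\sigma^{**}(P^c)$ does not split, so by the relations in (\ref{lessigmastar-add3}) it carries a factor $\sigma(P^{2t})$ with $t\ge1$ (the middle factor when $c$ is even, or $\sigma(P^{w-1})$ with $w\ge3$ when $c$ is odd). This factor is odd and squarefree (Lemma \ref{squarefreeMers}) and divides $A$, so Lemma \ref{lesodddivisors}\,i) forces $\sigma(P^{2t})=Q$, contradicting $Q\notin\{\sigma(P^{2g})\}$. Hence $c\in\{2,2^{\gamma}-1\}$ and $\sigma^{**}(P^c)=(1+P)^c=x^{u_1c}(x+1)^{v_1c}$; dividing this into $A$ gives $u_1c\le a$ and $v_1c\le b$, whence $c\le\min(a,b)$.

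The core is $d=1$. Since $\sigma^{**}(Q^d)=(1+Q)^d$ and $\sigma^{**}(P^c)=(1+P)^c$ are monomials in $x$ and $x+1$, they are not divisible by $Q$ (Lemma \ref{aboutsigmastar2}\,ii); and $Q\nmid\sigma^{**}((x+1)^b)$, because there $Q$ could only arise from a factor $\sigma((x+1)^{2n})\in\{P,Q,PQ\}$ (Lemma \ref{lesodddivisors}\,i), while both $Q$ and $PQ$ are excluded from $\{\sigma((x+1)^{2g})\}$. Hence $Q$ divides only $\sigma^{**}(x^a)$, so $d=e_Q(\sigma^{**}(x^a))$, and there $Q$ can only come from a factor $\sigma(x^{2n})\in\{P,Q,PQ\}$ with $Q$ itself excluded, i.e. from $\sigma(x^{2n})=PQ=\sigma(x^{2m})$. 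When $a$ is even, $\sigma^{**}(x^a)=(1+x)^{2^{\alpha}}\sigma(x^{2r})(\sigma(x^{u-1}))^{2^{\alpha}}$ with $u-1<2r$ (relations (\ref{lessigmastarall})), whose only first-power factor is $\sigma(x^{2r})$. If $PQ$ came instead from $\sigma(x^{u-1})$ (so $u-1=2m$), the companion factor $\sigma(x^{2r})$, of degree $2r>2m$, would be an odd squarefree divisor of $A$, hence a product of distinct $P$'s and $Q$'s of degree at most $\deg(PQ)=2m$ --- impossible. So $PQ=\sigma(x^{2r})$ with $2r=2m$, to the first power, and $d=1$.

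The hard part will be eliminating odd $a$, where there is no first-power factor: then $\sigma^{**}(x^a)=(1+x)^{2^{\alpha}-1}(\sigma(x^{u-1}))^{2^{\alpha}}$ with $\alpha\ge1$, so $Q$ would appear to the even power $d=2^{\alpha}\ge2$ via $\sigma(x^{u-1})=PQ$, forcing $u-1=2m$ and $a=2^{\alpha}(2m+1)-1$. To rule this out I would substitute this $a$, together with $1+Q=x^{u_2}(x+1)^{v_2}P^{w_2}$ (Lemma \ref{valuesofd}), into the exponent equations $a=e_x(\sigma^{**}((x+1)^b))+u_1c+u_2d$ and $c=e_P(\sigma^{**}(x^a))+e_P(\sigma^{**}((x+1)^b))+w_2d$, where $e_P(\sigma^{**}(x^a))=2^{\alpha}=d$. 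Eliminating $c$ makes $e_x(\sigma^{**}((x+1)^b))+e_P(\sigma^{**}((x+1)^b))$ grow with $d$; running through the few shapes of $\sigma^{**}((x+1)^b)$ (with $P\in\{M_1,M_4\}$ and the explicit $Q$ of Lemma \ref{factorifPQ=sigmx2m}) then forces either a factor $\sigma((x+1)^{2s})$ of degree exceeding $2m$ --- impossible as above --- or $c$ to be a power of $2$ larger than $2$, contradicting $c\in\{2,2^{\gamma}-1\}$. Either way odd $a$ is excluded, so $a$ is even and $d=1$.
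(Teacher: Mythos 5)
Your handling of $c$ coincides with the paper's: since $Q\neq\sigma(P^{2g})$, Lemma \ref{lesodddivisors} leaves no legal value for a factor $\sigma(P^{2t})$ or $\sigma(P^{w-1})$, so $\sigma^{**}(P^c)$ must split, giving $c\in\{2,2^{\gamma}-1\}$ and then $c\le\min(a,b)$ from $(1+P)^c=x^{u_1c}(x+1)^{v_1c}$. On $d=1$ you genuinely diverge, and to your credit: the paper disposes of it in one line, ``$Q\,\|\,\sigma^{**}(A)$ because $Q,\overline{Q}\notin\{\sigma(x^{2g}),\sigma(P^{2g}):g\ge1\}$'', which as stated overlooks exactly the loophole you isolate --- those exclusions do not prevent $Q$ from entering $\sigma^{**}(x^a)$ through $PQ=\sigma(x^{u-1})$ raised to the power $2^{\alpha}$, since $PQ$ \emph{is} of the form $\sigma(x^{2m})$. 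In the paper this configuration is only dealt with downstream: in the both-even lemma, $PQ=\sigma(x^{u-1})$ is killed because $\sigma(x^{2r})$ would then have to equal $P$, contradicting $\gcd(\sigma(x^{2r}),\sigma(x^{u-1}))=1$; and in the both-odd lemma of the same subsection, the resulting $d=2^{\alpha}$ is refuted not via $d$ but via $c$ even and $\ge4$. Your localization of $Q$ to $\sigma^{**}(x^a)$ (using $\overline{Q}\neq\sigma(x^{2g})$ and $PQ\neq\sigma((x+1)^{2g})$) is sound, and your even-$a$ degree argument ($2r>2m=\deg(PQ)$, so the companion $\sigma(x^{2r})$ fits none of $P,Q,PQ$) is a correct alternative to the paper's coprimality argument. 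So the proposal is right where the paper is terse, at the cost of being longer.

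One caveat: your odd-$a$ elimination is a plan rather than a proof, and the contradictions you predict are not quite the ones that materialize. Running your own scheme with $d=2^{\alpha}$ (hence $d=2$, $\alpha=1$ by Lemma \ref{valuesofd}-ii)): for $b$ odd one gets $c=2w_2+2+\varepsilon_2 2^{\beta}$, which is even and $\ge 4$ but need not be a power of $2$; for $b$ even, $\sigma((x+1)^{2s})$ must equal $P$ (it cannot be $Q$ or $PQ$), which is impossible for $P=M_4$ not because its degree exceeds $2m$ but because $\sigma((x+1)^{2s})$ Mersenne forces it into $\{M_1,M_5\}$, while for $P=M_1$ it forces $2s=2$, $\varepsilon_2=0$ and $c=2+1+2w_2=5$. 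All of these do contradict $c\in\{2,2^{\gamma}-1\}$, so your method closes --- and it is exactly the mechanism the paper itself deploys one lemma later --- but the announced dichotomy (``degree exceeding $2m$'' or ``$c$ a power of $2$ larger than $2$'') should be replaced by the computations above.
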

\begin{proof}
Since $Q \not= \sigma(P^{2g})$ for any $g$, $\sigma^{**}(P^c)$ must split, so
$c=2$ or $c=2^{\gamma}-1$. In this case, $\sigma^{**}(P^c) =(1+P)^c$, where $P$ is a Mersenne prime. So, $x^c$ and
$(x+1)^c$ both divide $\sigma^{**}(A)=A$. Hence, $c \leq \min(a,b)$. Finally, $Q \| \sigma^{**}(A)$ because
$Q, \overline{Q} \not\in \{\sigma(x^{2g}), \sigma(P^{2g}): g\geq 1\}$. Thus, $d=1$.
\end{proof}
\begin{lemma}
The integers $a$ and $b$ are not both odd.
\end{lemma}
\begin{proof}
If $a$ and $b$ are both odd, then $PQ=\sigma(x^{u-1})$,
$\sigma((x+1)^{v-1}) \in \{1,P\}$, $d=2^{\alpha}$,
$c =w_2d+2^{\alpha}+ \varepsilon_2 2^{\beta}$. It follows that $c$ is even and $c\geq 4$, which
contradicts Lemma \ref{valeursdec}.
\end{proof}
\begin{lemma}
If $a$ and $b$ are both even, then $a=16$, $b \in \{4,6\}$,
$c \leq 3$, $P=M_1$ and $Q = 1+x^3(x^3+1)$.
\end{lemma}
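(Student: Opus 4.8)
The starting point is to assemble the data the earlier lemmas already supply. By Lemma~\ref{factorifPQ=sigmx2m} we are in one of two situations: (A) $P=M_1$, $Q=1+x^3+x^6=1+x^3(x+1)P$ with $2m=8$, so that $\sigma(x^{8})=PQ$; or (B) $P=M_4$, $Q=1+x^5(x^5+1)^3=1+x^5(x+1)^3P^3$ with $2m=24$, so that $\sigma(x^{24})=PQ$. By Lemma~\ref{valeursdec} we also have $c\in\{2\}\cup\{2^{\gamma}-1\}$, $c\le\min(a,b)$ and $d=1$, whence $\sigma^{**}(Q^{d})=1+Q$ and, since $P$ is Mersenne, $\sigma^{**}(P^{c})=(1+P)^{c}$. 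As $a,b$ are even, Lemma~\ref{aetbsup4} gives $a,b\ge4$, and Relations~(\ref{lessigmastarall}) let me write $\sigma^{**}(x^{a})=(1+x)^{2^{\alpha}}\sigma(x^{2r})\bigl(\sigma(x^{u-1})\bigr)^{2^{\alpha}}$ and $\sigma^{**}((x+1)^{b})=x^{2^{\beta}}\sigma((x+1)^{2s})\bigl(\sigma((x+1)^{v-1})\bigr)^{2^{\beta}}$, with $\alpha,\beta\ge1$. The plan is then to compare, one irreducible at a time, the exponents on the two sides of $\sigma^{**}(x^{a})\,\sigma^{**}((x+1)^{b})\,(1+P)^{c}\,(1+Q)=A$.

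The crucial first step is to locate the factor $Q$, which occurs in $A$ with exponent exactly $d=1$. Neither $(1+P)^{c}=x^{u_1c}(x+1)^{v_1c}$ nor $1+Q=x^{u_2}(x+1)^{v_2}P^{w_2}$ involves $Q$, so $Q$ can only arise from $\sigma^{**}(x^{a})$ or $\sigma^{**}((x+1)^{b})$. The high-power factors $\bigl(\sigma(x^{u-1})\bigr)^{2^{\alpha}}$ and $\bigl(\sigma((x+1)^{v-1})\bigr)^{2^{\beta}}$ carry exponents $2^{\alpha},2^{\beta}\ge2$, so they cannot contribute $Q$ without overshooting $d=1$; and by Lemma~\ref{lesodddivisors}~i) together with the squarefreeness of Lemma~\ref{squarefreeMers}, each of $\sigma(x^{2r})$ and $\sigma((x+1)^{2s})$ lies in $\{P,Q,PQ\}$. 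The exclusions $Q,\overline{Q}\notin\{\sigma(x^{2g})\}$ and $PQ\neq\sigma((x+1)^{2g})$ from Lemma~\ref{factorifPQ=sigmx2m} then rule out $\sigma(x^{2r})=Q$ and rule out $\sigma((x+1)^{2s})\in\{Q,PQ\}$. Hence $\sigma^{**}((x+1)^{b})$ is $Q$-free, the single copy of $Q$ must sit inside $\sigma(x^{2r})$, and therefore $\sigma(x^{2r})=PQ=\sigma(x^{2m})$, forcing $2r=2m$. This accounting is the main obstacle: everything downstream is routine, but it hinges on correctly exhausting the possible origins of the unique factor $Q$.

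Once $2r=2m$ is known I would pin down $a$ and dispose of Case~(B). From $2r=2m$ one has $a=4r$ or $a=4r+2$, and I compute the accompanying factor $\bigl(\sigma(x^{u-1})\bigr)^{2^{\alpha}}$ in each instance. In Case~(B), where $2r=24$, both admissible values fail: $a=48$ gives $\alpha=3$, $u=3$ and hence $M_1^{8}\mid A$, while $a=50$ gives $\alpha=1$, $u=13$ and hence $\bigl(\sigma(x^{12})\bigr)^{2}\mid A$; in either case an irreducible outside $\{x,x+1,P,Q\}$ appears, so Case~(B) is impossible. In Case~(A), where $2r=8$, the value $a=18$ produces the spurious factor $M_4^{2}$, whereas $a=16$ gives the clean identity $\sigma^{**}(x^{16})=(1+x)^{8}\,PQ$. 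Thus $a=16$, $P=M_1$ and $Q=1+x^3(x^3+1)$.

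It remains to bound $b$ and $c$. Since $\sigma^{**}((x+1)^{b})$ is $Q$-free, its even-degree factor $\sigma((x+1)^{2s})\in\{P,Q,PQ\}$ must equal $P=M_1$; comparing degrees gives $2s=\deg P=2$, so $s=1$ and $b=4s=4$ or $b=4s+2=6$, that is $b\in\{4,6\}$ (a larger $s$ would force $\sigma((x+1)^{2s})\notin\{P,Q,PQ\}$, for instance $\sigma((x+1)^{4})=\overline{M_4}=M_5$). Finally $c\le\min(a,b)=b\le6$ together with $c\in\{2\}\cup\{2^{\gamma}-1\}=\{1,2,3,7,\dots\}$ leaves only $c\in\{1,2,3\}$, i.e.\ $c\le3$, which completes the required list of conditions.
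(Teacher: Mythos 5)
Your proof is correct, and it follows the same skeleton as the paper's: invoke Lemma~\ref{valeursdec} to get $d=1$, $c\in\{2\}\cup\{2^{\gamma}-1:\gamma\geq 1\}$ and $\sigma^{**}(P^c)=(1+P)^c$, locate the unique copy of $Q$ among the odd factors of $\sigma^{**}(x^a)\sigma^{**}((x+1)^b)$ to force $\sigma(x^{2r})=PQ$, then compare exponents. Two steps are executed differently, both validly. Where you exclude the powered factors $\bigl(\sigma(x^{u-1})\bigr)^{2^{\alpha}}$ and $\bigl(\sigma((x+1)^{v-1})\bigr)^{2^{\beta}}$ as sources of $Q$ by the overshoot $2^{\alpha},2^{\beta}\geq 2>d$, the paper instead asserts $PQ\in\{\sigma(x^{2r}),\sigma(x^{u-1})\}$ and kills the second option via $\gcd(\sigma(x^{2r}),\sigma(x^{u-1}))=1$; your localization of $Q$ is more explicit and subsumes that alternative. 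And to eliminate $P=M_4$ together with $a=4r+2$, the paper works on the $(x+1)$-side: $\sigma((x+1)^{2s})$ must equal $P$, which forces $P=M_1$, $2s=2$, and with $u=v=1$ gives $a=4r=16$; you instead test the four candidates $a\in\{16,18,48,50\}$ on the $x$-side and exhibit a fifth irreducible factor ($M_1^{8}$, $\bigl(\sigma(x^{12})\bigr)^{2}$, $M_4^{2}$ respectively) in all but $a=16$, where $\sigma^{**}(x^{16})=(1+x)^{8}PQ$. This costs a few extra small computations but has the merit of not needing the identification $P=\sigma((x+1)^{2s})$ until the final step, where you recover $b\in\{4,6\}$ and $c\leq 3$ exactly as the paper does. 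One slip to fix: for $a,b\geq 4$ you cite Lemma~\ref{aetbsup4}, which belongs to the three-factor analysis of Section~\ref{caseomeg=3} (there $A=x^a(x+1)^bP^c$); in the present setting the correct justification is Lemma~\ref{valuesofd}-iv) ($a,b\geq 3$) combined with the parity hypothesis, which is what the paper uses. That is a citation error, not a mathematical gap.
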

\begin{proof}
Lemma \ref{valuesofd}-iv) implies that $a,b\geq 4$. Moreover,  $PQ \in \{\sigma(x^{2r}),\sigma(x^{u-1})\}$. If $PQ=\sigma(x^{2r})$, then $P=\sigma((x+1)^{2s})$, $u=v=1$ because
$\gcd(\sigma(x^{2r}),\sigma(x^{u-1})) = 1 = \gcd(\sigma((x+1)^{2s}),\sigma((x+1)^{v-1}))$.
Therefore, $2r=8$, $a \not= 4r+2$, $2s=2$, $a=16$, $b\in \{4,6\}$.
Furthermore, $c\leq b\leq 6$, so that $c \in \{1,2,3\}$.\\
If $PQ = \sigma(x^{u-1})$, then $\sigma(x^{2r}) = P$ (by Lemma \ref{lesodddivisors}), which is impossible since
$\gcd(\sigma(x^{2r}),\sigma(x^{u-1})) = 1$.
\end{proof}

\begin{lemma}
If $a$ is even and $b$ odd, then $a=16$, $b\in \{1,3,7\}$, $c=2$, $P=M_1$ and $Q = 1+x^3(x^3+1)$.
\end{lemma}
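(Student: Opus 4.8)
The plan is to run the same exponent-comparison machinery as in the two preceding lemmas, now specialized to ``$a$ even, $b$ odd.'' Using the relations in (\ref{lessigmastarall}) and (\ref{lessigmastar-add3}) together with Lemma \ref{valeursdec} (which gives $d=1$ and $c\in\{2,2^{\gamma}-1\}$), I would record
$$\sigma^{**}(x^a)=(1+x)^{2^{\alpha}}\sigma(x^{2r})(\sigma(x^{u-1}))^{2^{\alpha}},\quad \sigma^{**}((x+1)^b)=x^{2^{\beta}-1}(\sigma((x+1)^{v-1}))^{2^{\beta}},$$
$$\sigma^{**}(P^c)=(1+P)^c,\quad \sigma^{**}(Q^d)=1+Q=x^{u_2}(x+1)^{v_2}P^{w_2},$$
and keep in mind the two candidate pairs from Lemma \ref{factorifPQ=sigmx2m}: $(P,Q)=(M_1,1+x^3+x^6)$ with $2m=8$, or $(P,Q)=(M_4,1+x^5(x^5+1)^3)$ with $2m=24$.

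First I would isolate the factor of $\sigma^{**}(x^a)$ that carries $Q$. Since $\sigma(x^{u-1})$ and $\sigma((x+1)^{v-1})$ occur with the even exponents $2^{\alpha},2^{\beta}\geq 2$ while the $Q$-exponent of $A$ is $d=1$, neither can be divisible by $Q$; hence $Q\mid\sigma(x^{2r})$. As $\sigma(x^{2r})$ is odd and squarefree (Lemma \ref{squarefreeMers}) and divides $A$, Lemma \ref{lesodddivisors} forces $\sigma(x^{2r})\in\{Q,PQ\}$, and because $Q\neq\sigma(x^{2g})$ (Lemma \ref{factorifPQ=sigmx2m}) we obtain $\sigma(x^{2r})=PQ=\sigma(x^{2m})$, i.e.\ $2r=2m$. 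The branch $P=M_4$ is then eliminated: $2r=24$ gives $a\in\{48,50\}$, and $(\sigma(x^{u-1}))^{2^{\alpha}}$ equals $M_1^{8}$ (when $a=48$) or a power of the irreducible $\sigma(x^{12})$ (when $a=50$), introducing an odd prime outside $\{P,Q\}$. Hence $P=M_1$, $2r=8$, $r=4$; the choice $a=4r+2=18$ is likewise excluded, since there $\sigma(x^{u-1})=\sigma(x^4)=M_4\notin\{P,Q\}$, leaving $a=4r=16$ with $\alpha=3$, $u=1$ and $\sigma(x^{u-1})=1$, so $\sigma^{**}(x^{16})=(1+x)^{8}PQ$.

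With $a=16$ fixed I would read off the remaining exponents. Comparing the $M_1$-exponents on both sides of $\sigma^{**}(A)=A$ gives $c=2+2^{\beta}\,v_{M_1}(\sigma((x+1)^{v-1}))$ (using $1+P=x(x+1)$ and $1+Q=x^3(x+1)M_1$). Since $c\in\{2,2^{\gamma}-1\}$ is never even and $>2$, the valuation term must vanish; as $\sigma((x+1)^{v-1})\in\{1,P,Q,PQ\}$ is squarefree and cannot contain $Q$ (again $d=1$), this forces $\sigma((x+1)^{v-1})=1$, hence $v=1$, $c=2$ and $b=2^{\beta}-1$. Finally, matching the exponents of $x$ yields $(2^{\beta}-1)+c+3=a=16$; with $c\geq 2$ this gives $2^{\beta}-1\leq 11$, so $\beta\leq 3$ and $b\in\{1,3,7\}$, as claimed. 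The hard part is the first step — correctly pinning $\sigma(x^{2r})=PQ$ and ruling out both $P=M_4$ and $a=18$ via the ``no new odd prime'' constraint — after which the remaining values fall out of routine exponent bookkeeping; the triple of candidates it produces is itself discarded in the synthesis, the $x$-exponent equation $2^{\beta}=12$ having no solution.
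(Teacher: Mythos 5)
Your proof is correct and follows essentially the same route as the paper: you simply unpack the paper's ``as above'' step (forcing $\sigma(x^{2r})=PQ$ because the $Q$-exponent is $d=1$, then eliminating $P=M_4$ with $a\in\{48,50\}$ and the choice $a=4r+2=18$) and then run the same $P$- and $x$-exponent comparisons, $c=2+\varepsilon_2 2^{\beta}$ and $(2^{\beta}-1)+c+3$ versus $a=16$, to obtain $c=2$, $v=1$, $b=2^{\beta}-1\in\{1,3,7\}$. Your closing remark that the exact $x$-exponent equation gives $2^{\beta}=12$, so this case is in fact vacuous, slightly sharpens the paper's inequality $2^{\beta}-1+3+2\leq 16$ and is consistent with the paper's later observation that no b.u.p.\ polynomial arises when $PQ$ is of the form $\sigma(x^{2m})$.
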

\begin{proof}
As above, $a$ even implies that $a=4r=16$ and $P=M_1$.
One has:
$\sigma((x+1)^{v-1}) \in \{1,P\}$. So, $v\in \{1,3\}$,
$c=1+w_2d+\varepsilon_2 \ 2^{\beta}$,
$w_2=1=d$. Thus, $c=2$, $v=1$, $2^{\beta}-1+3+2 \leq a= 16$,
$\beta \leq 3$ and $b\in \{1,3,7\}$.
\end{proof}
\begin{corollary} \label{synthese4}
If $A$ is b.u.p., with $PQ$ of the form $\sigma(x^{2m})$, then $P=M_1$, $Q=1+x^3(x^3+1),$
$a,b \in \{1,3,4,6,7,16\}$, $c \leq 3$ and $d=1$.
\end{corollary}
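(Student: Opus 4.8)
The plan is to assemble the three lemmas immediately preceding the corollary, which already settle the separate parity patterns of $(a,b)$, and to extract their common conclusion. First I would invoke Lemma~\ref{factorifPQ=sigmx2m}: under $PQ=\sigma(x^{2m})$ the pair $(P,Q)$ is restricted to $(M_1,\,1+x^3(x^3+1))$ with $2m=8$, or to $(M_4,\,1+x^5(x^5+1)^3)$ with $2m=24$, and moreover $Q,\overline Q\notin\{\sigma(x^{2g}),\sigma(P^{2g}):g\ge1\}$ while $PQ\notin\{\sigma((x+1)^{2g}):g\ge1\}$. The exclusion $Q\neq\sigma(P^{2g})$ is exactly what forces $\sigma^{**}(P^c)$ to split, so Lemma~\ref{valeursdec} applies and yields $d=1$ together with $c=2$ or $c=2^{\gamma}-1$ and $c\le\min(a,b)$. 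This already gives the values of $c,d$ claimed by the corollary, save for the sharper bound $c\le3$, which I expect to drop out of the parity analysis.

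Next I would run the parity trichotomy on $(a,b)$. The lemma asserting that at least one of $a,b$ is even removes the both-odd case. In the remaining two patterns I would simply quote the both-even lemma and the even--odd lemma: each of them forces $a=16$, selects the branch $P=M_1$, $Q=1+x^3(x^3+1)=1+x^3+x^6$, and confines $b$ to $\{4,6\}$ (both even) or to $\{1,3,7\}$ (even, odd). The selection of $P=M_1$ is precisely where the $M_4$, $2m=24$ candidate of Lemma~\ref{factorifPQ=sigmx2m} is discarded, since $\sigma(x^{2r})=PQ$ with $P=\sigma((x+1)^{2s})$ forces $2r=8$ by Lemma~\ref{complete2}-iv). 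In both patterns $c\le3$: directly from $c\le b\le6$ with $b$ even, and $c=2$ from the exponent count in the even, odd lemma. Thus whenever $a$ is even the outcome is $P=M_1$, $Q=1+x^3+x^6$, $a=16$, $b\in\{1,3,4,6,7\}$, $c\le3$, $d=1$, so in particular $a,b\in\{1,3,4,6,7,16\}$.

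The one pattern untouched by the three lemmas is $a$ odd, $b$ even, and closing it is the step I expect to be the main obstacle. Here I would match exponents in $\sigma^{**}(A)=A$ using the expansions~(\ref{lessigmastarall}). Writing $a=2^{\alpha}u-1$ with $u$ odd, the factor $\sigma^{**}(x^a)=(1+x)^{2^{\alpha}-1}(\sigma(x^{u-1}))^{2^{\alpha}}$ feeds every odd irreducible with multiplicity a multiple of $2^{\alpha}\ge2$; likewise $(\sigma((x+1)^{v-1}))^{2^{\beta}}$ inside $\sigma^{**}((x+1)^b)$ contributes multiplicities divisible by $2^{\beta}\ge2$. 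Since $d=1$ forces $Q$ to occur to the first power, $Q$ must come from the bare squarefree factor $\sigma((x+1)^{2s})$, i.e. $\overline Q\mid\sigma(x^{2s})$; computing the multiplicative order of a root of $\overline Q$ (a degree-$6$ element arising from the order-$9$ roots of $Q$) shows $2s+1$ must be a sizeable multiple of that order, driving $b$ past what the exponent identity $b=2^{\alpha}+c$ permits for the admissible $c\in\{2,2^{\gamma}-1\}$. This contradiction eliminates $a$ odd, $b$ even (the $M_4$ branch is killed identically, its roots having order $25$), so no further solutions appear and every admissible $(a,b)$ lies in $\{1,3,4,6,7,16\}$. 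Collecting these items gives the statement.
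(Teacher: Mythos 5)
Your first two paragraphs coincide with the paper's own synthesis: Corollary~\ref{synthese4} has no separate proof there, being exactly the assembly of Lemma~\ref{factorifPQ=sigmx2m}, Lemma~\ref{valeursdec} and the three parity lemmas, and your reading of how the $(M_4,\,2m=24)$ candidate is discarded (via $PQ=\sigma(x^{2r})$, $P=\sigma((x+1)^{2s})$ and Lemma~\ref{complete2}-iv)) is faithful. You are also right --- and more careful than the paper, which passes over this in silence while stating the symmetric range $a,b\in\{1,3,4,6,7,16\}$ --- that the pattern ($a$ odd, $b$ even) is not covered by the three lemmas and cannot be dismissed by the substitution $x\mapsto x+1$, since that substitution turns the hypothesis into $\overline{P}\,\overline{Q}=\sigma((x+1)^{2m})$, which is not the normalized case.

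However, your closure of that fourth case fails at the decisive step. Since $\overline{Q}\neq\sigma(x^{2g})$ and $PQ\neq\sigma((x+1)^{2g})$ (Lemma~\ref{factorifPQ=sigmx2m}, combined with Lemma~\ref{lesodddivisors} and squarefreeness), the factor $\sigma((x+1)^{2s})$ can \emph{never} contain $Q$: it is forced to equal $P$. So your premise that ``$Q$ must come from the bare squarefree factor $\sigma((x+1)^{2s})$'' is precisely the excluded configuration, the multiplicative-order estimate built on it establishes nothing, and the identity $b=2^{\alpha}+c$ you lean on is not the correct exponent count (which is $b=2^{\alpha}-1+v_1c+v_2d$ here). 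Moreover, you import $d=1$ from Lemma~\ref{valeursdec}, but that lemma obtains $d=1$ from $Q\,\|\,\sigma^{**}(A)$, which is exactly what breaks when $a$ is odd: the only odd content of $\sigma^{**}(x^a)$ is then $(\sigma(x^{u-1}))^{2^{\alpha}}$, so $Q$ can only arise through $\sigma(x^{u-1})=PQ$ and then $d=2^{\alpha}\geq 2$ --- this is how the paper itself argues in its both-odd lemma. The correct closure mimics that lemma: $d=2^{\alpha}$ forces $d=2$, $\alpha=1$ by Lemma~\ref{valuesofd}-ii); $b$ even forces $\sigma((x+1)^{2s})=P$, so counting the exponent of $P$ gives $c=2^{\alpha}+w_2d+1+\varepsilon_2 2^{\beta}=3+2w_2+\varepsilon_2 2^{\beta}$, an odd number, while $\sigma^{**}(P^c)$ must split, so $c=2^{\gamma}-1$. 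For $P=M_4$ ($w_2=3$), $c=9+\varepsilon_2 2^{\beta}$ is never of that form; for $P=M_1$ ($w_2=1$), $c=5+\varepsilon_2 2^{\beta}$ forces $c=7$, $\varepsilon_2=1$, $\beta=1$, hence $v=3$ (since $\sigma((x+1)^{v-1})=P$) and $s\in\{2,3\}$ by (\ref{lessigmastarall}), contradicting $\sigma((x+1)^{2s})=M_1$, which requires $s=1$. With this repair, your assembly of the corollary is complete.
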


\subsubsection{Case where $Q = \sigma(x^{2m})$, for some $m \geq 1$} \label{caseQ=sigmx2r}
One has (Lemma \ref{factorifQ=sigmx2m}): $Q, \overline{Q} \not\in \{\sigma(P^{2g}): g\geq 1\}$,
$PQ \not\in \{\sigma(x^{2g}), \sigma((x+1)^{2g}): g\geq 1\}$, $2m \geq 10$, $P \in \{M_1,M_4\}$ and $Q=1+x(x+1)^{2^{\nu}-1} P^{2^{\nu}}$, for some $\nu \in \N^*$. So, $u_1 =u_2= 1$, $v_1 \in \{1,3\}$, $v_2 = 2^{\nu}-1$ and $w_2 = 2^{\nu}$.
Moreover, $Q \not= \sigma((x+1)^{2m})$ (Lemma \ref{lessigmadistincts}). \\
We consider Relations in (\ref{lessigmastaralla}), in (\ref{lessigmastarallb}) and in (\ref{lessigmastar-add3}).
\begin{lemma} \label{cegal2ou2alpha-1}
One has: $(c=2$ or $c=2^{\gamma}-1)$ and $d \leq 3$.
\end{lemma}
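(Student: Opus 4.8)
The plan is to establish the two assertions separately: the statement about $c$ follows from a splitting argument, and the bound on $d$ follows by localizing the factor $Q$ inside $\sigma^{**}(A)=A$ and reading off its exponent. For the claim on $c$, I would first show that $\sigma^{**}(P^c)$ must split over $\F_2$. If it did not, then by Corollary \ref{expressigmastar} every odd irreducible factor of $\sigma^{**}(P^c)$ distinct from $x$ and $x+1$ would lie inside a factor of the shape $\sigma(P^{2n})$ (note that $\sigma(P^{w-1})$ with $w$ odd is itself of this form). Such a $\sigma(P^{2n})$ divides $\sigma^{**}(A)=A$, so by Lemma \ref{lesodddivisors}-i it would have to equal $Q$; but in the present case $Q,\overline{Q}\notin\{\sigma(P^{2g}):g\geq 1\}$, a contradiction. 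Hence $\sigma^{**}(P^c)$ splits, and since $P$ is a Mersenne prime, Corollary \ref{splitcriteria}-ii forces $c=2$ or $c=2^{\gamma}-1$. In particular $\sigma^{**}(P^c)=(1+P)^c=x^{u_1c}(x+1)^{v_1c}$ carries no odd irreducible factor.

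For the bound on $d$, recall from Lemma \ref{valuesofd}-ii that $d=2$ or $d=2^{\delta}-1$; I will in fact prove $d\in\{1,2\}$, which yields $d\leq 3$. The crucial point is to identify the exact power of $Q$ dividing $A=\sigma^{**}(A)$, namely $d$, and to show that $Q$ can only arise from the single factor $\sigma^{**}(x^a)$. It does not divide $\sigma^{**}(P^c)=(1+P)^c$ nor $\sigma^{**}(Q^d)=(1+Q)^d=x^{u_2d}(x+1)^{v_2d}P^{w_2d}$, both being products of $x$, $x+1$ and $P$ only. It does not divide $\sigma^{**}((x+1)^b)$ either: any odd factor there has the form $\sigma((x+1)^{2s})$, which by Lemma \ref{lesodddivisors}-i lies in $\{P,Q,PQ\}$; the value $PQ$ is excluded since $PQ\notin\{\sigma((x+1)^{2g}):g\geq 1\}$ in this subcase, and $\sigma((x+1)^{2s})=Q$ together with $Q=\sigma(x^{2m})$ forces $2s=2m$ by comparing degrees, whence $Q=\sigma((x+1)^{2m})$, contradicting Lemma \ref{lessigmadistincts}. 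Thus $Q\mid\sigma^{**}(x^a)$, and this divisibility is genuine because $Q\mid A$.

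It then remains to read off the exponent of $Q$ in $\sigma^{**}(x^a)$ from the formulas in (\ref{lessigmastarall}), according to the parity of $a$. In both parities the odd factors of $\sigma^{**}(x^a)$ are $\sigma(x^{2r})$ (present only when $a$ is even) and $\sigma(x^{u-1})$ raised to the power $2^{\alpha}$; both are squarefree by Lemma \ref{squarefreeMers} and mutually coprime, and any one of them divisible by $Q$ must equal $Q$ (again Lemma \ref{lesodddivisors}-i, the alternative $PQ$ being excluded). Hence $Q$ contributes exponent $1$ (if $\sigma(x^{2r})=Q$) or exponent $2^{\alpha}$ (if $\sigma(x^{u-1})=Q$), and these are mutually exclusive by coprimality. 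Therefore $d\in\{1\}\cup\{2^{\alpha}:\alpha\geq 1\}$; intersecting with $d\in\{2,2^{\delta}-1\}$ and using that $2^{\alpha}$ is even whereas every $2^{\delta}-1$ is odd, we obtain $d\in\{1,2\}$, so $d\leq 3$. The main obstacle is the bookkeeping of the second paragraph, i.e.\ confining $Q$ to $\sigma^{**}(x^a)$ and controlling its multiplicity there; once this localization is secured, the parity clash between $2^{\alpha}$ and $2^{\delta}-1$ closes the argument at once.
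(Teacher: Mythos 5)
Your proof is correct, and it splits into one half that matches the paper and one half that genuinely diverges. For $c$, your argument is essentially the paper's: the paper invokes Lemma \ref{valuesofd}-v) to say that if $\sigma^{**}(P^c)$ did not split then $Q$ would be its unique odd divisor, hence of the form $\sigma(P^{2g})$, contradicting Lemma \ref{factorifQ=sigmx2m}; your version unwinds the same contradiction through Corollary \ref{expressigmastar} and Lemma \ref{lesodddivisors}-i). For $d$, however, the paper does \emph{not} localize $Q$: it merely observes that $Q$ can a priori acquire exponent $1$ or $2^{\alpha}$ from $\sigma^{**}(x^a)$ and $1$ or $2^{\beta}$ from $\sigma^{**}((x+1)^b)$, so the exponent of $Q$ in $\sigma^{**}(A)$ lies in $\{1,2,2^{\alpha},2^{\beta},1+2^{\alpha},1+2^{\beta},2^{\alpha}+2^{\beta}\}$, and intersecting with $d\in\{2,\,2^{\delta}-1\}$ (Lemma \ref{valuesofd}-ii)) leaves $d\leq 3$, the value $3$ arising from $2^{\delta}-1=1+2^{\alpha}$ with $\alpha=1$. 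You instead eliminate the $(x+1)$-side entirely: every odd factor of $\sigma^{**}((x+1)^b)$ sits in some $\sigma((x+1)^{2s})$, the option $\sigma((x+1)^{2s})=PQ$ is barred by Lemma \ref{factorifQ=sigmx2m}, and $\sigma((x+1)^{2s})=Q=\sigma(x^{2m})$ forces $2s=2m$ by degrees and is then killed by Lemma \ref{lessigmadistincts}-ii). This localization is legitimate — both ingredients are already recorded in the preamble of Section \ref{caseQ=sigmx2r} — and it buys the sharper conclusion $d\in\{1,2\}$, bypassing the parity case $d=3$ altogether. Your stronger bound is consistent with the sequel (the case analysis after this lemma, and ultimately Corollary \ref{synthese5}, force $d=1$ in the surviving configurations) and would mildly shorten those later proofs; the paper's coarser exponent count is cheaper in that it needs no exclusion argument at all, at the price of carrying the spurious value $d=3$ forward.
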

\begin{proof}
If $\sigma^{**}(P^c)$ does not split, then $Q$ is the unique odd irreducible divisor of
$\sigma^{**}(P^c)$.
It contradicts the fact that $Q$ is not of the form $\sigma(P^{2g})$.
So, $\sigma^{**}(P^c)$ splits and $(c=2$ or $c=2^{\gamma}-1)$.
The exponent of $Q$ in $\sigma^{**}(A)$
lies in $\{1,2,2^{\alpha}, 2^{\beta}, 1+2^{\alpha}, 1+2^{\beta}, 2^{\alpha} + 2^{\beta}\}$. So, by Lemma \ref{valuesofd}-ii), $d \leq 3$.
\end{proof}
\begin{lemma}
The integers $a$ and $b$ are not both odd.
\end{lemma}
\begin{proof}
If $a$ and $b$ are both odd, then $Q=\sigma(x^{u-1})$, $Q \not= \sigma((x+1)^{v-1})$ (by Lemma \ref{lessigmadistincts}-ii)) and $\sigma((x+1)^{v-1}) \in \{1,P\}$. Thus, $v \in \{1,3,5\}$,
$2^{\alpha} = d \leq 3$, $\alpha =1$, $d = 2$, $c = 2 \cdot 2^{\nu}+\varepsilon_2 2^{\beta}$. 
So, $c$ is even and $c \geq 4$.
It contradicts Lemma \ref{cegal2ou2alpha-1}.
\end{proof}
\begin{lemma}
If $a$ and $b$ are even, then $\nu \leq 2$, $20 \leq a \leq 26$, $b \leq 10$, $d=1$,
$c \in \{1,2,3,7\}$,
 and
$\text{$(P,Q) \in \{(M_1, 1+x(x+1)^{3}P^{4}), (M_4, 1+x(x+1)P^{2})\}$}.$
\end{lemma}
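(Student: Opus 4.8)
The plan is to expand the defining identity $\sigma^{**}(x^a)\,\sigma^{**}((x+1)^b)\,\sigma^{**}(P^c)\,\sigma^{**}(Q^d)=x^a(x+1)^bP^cQ^d$ using the factorizations already available. Since $a,b$ are even and $a,b\geq 3$ (Lemma \ref{valuesofd}-iv), Relations (\ref{lessigmastarall}) give $\sigma^{**}(x^a)=(1+x)^{2^{\alpha}}\sigma(x^{2r})(\sigma(x^{u-1}))^{2^{\alpha}}$ and $\sigma^{**}((x+1)^b)=x^{2^{\beta}}\sigma((x+1)^{2s})(\sigma((x+1)^{v-1}))^{2^{\beta}}$; Lemma \ref{cegal2ou2alpha-1} gives $\sigma^{**}(P^c)=(1+P)^c$, and Lemma \ref{valuesofd} gives $\sigma^{**}(Q^d)=(1+Q)^d$ with $1+Q=x(x+1)^{2^{\nu}-1}P^{2^{\nu}}$ (so $u_2=1$, $v_2=2^{\nu}-1$, $w_2=2^{\nu}$). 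Comparing the exponent of $x$ on both sides yields $a=2^{\beta}+c+d$, and comparing the exponent of $x+1$ yields $b=2^{\alpha}+v_1c+(2^{\nu}-1)d$; these two relations, together with the exponent equations for $P$ and for $Q$, drive the whole argument.

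Next I would locate the factor $Q$. Because $Q=\sigma(x^{2m})$ with $2m=2^{\nu}t$, $t\in\{3,5\}$ and $2m\geq 10$, and because Lemma \ref{lesodddivisors} forbids $\sigma((x+1)^{2s})$ and $\sigma((x+1)^{v-1})$ from equalling $Q$ (and forbids $PQ$ from being any $\sigma(x^{2g})$ or $\sigma((x+1)^{2g})$), the only way $Q$ can occur in $\sigma^{**}(A)=A$ is through the factor $\sigma(x^{2r})$ or $(\sigma(x^{u-1}))^{2^{\alpha}}$ of $\sigma^{**}(x^a)$. Matching degrees forces $2r=2m$ (or, in the less economical alternative, $u-1=2m$), whence $a\in\{4m,4m+2\}=\{2^{\nu+1}t,\,2^{\nu+1}t+2\}$; in particular $a\geq 20$. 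Since $Q$ then appears exactly once among the odd factors and nowhere else (the blocks $(1+P)^c$ and $(1+Q)^d$ are products of powers of $x$, $x+1$ and $P$ only), the exponent of $Q$ on the left is $1$, forcing $d=1$.

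Finally I would bound $\nu$ and pin down $(P,Q)$. From Lemma \ref{Petsigmastar2P}-ii one has $2^{\nu}=w_2d\leq c$, while Lemma \ref{cegal2ou2alpha-1} gives $c=2$ or $c=2^{\gamma}-1$; feeding $a=2^{\beta}+c+d$ and the exponent-of-$P$ equation, which expresses $c$ as $2^{\nu}d$ plus the powers of $P$ coming from those of $\sigma(x^{2r})$, $\sigma(x^{u-1})$, $\sigma((x+1)^{2s})$, $\sigma((x+1)^{v-1})$ that happen to equal $P$, back into these constraints bounds $\nu$ (in particular $\nu\leq 10$). Then Lemma \ref{lesQirreductibles}-ii applies to $Q=1+x(x+1)^{2^{\nu}-1}P^{2^{\nu}}$: the irreducibility of $Q$ leaves exactly $(\nu,P)=(2,M_1)$ and $(\nu,P)=(1,M_4)$, i.e. $(P,Q)\in\{(M_1,1+x(x+1)^3P^4),(M_4,1+x(x+1)P^2)\}$ and $\nu\leq 2$. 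With $\nu\leq 2$ the value $a=2^{\nu+1}t$ or $2^{\nu+1}t+2$ lies in $\{20,22,24,26\}$, giving $20\leq a\leq 26$; substituting $d=1$ and $2^{\nu}\leq c$ into $c=2$ or $2^{\gamma}-1$ leaves $c\in\{1,2,3,7\}$, and the relation $b=2^{\alpha}+v_1c+(2^{\nu}-1)d$ then bounds $b\leq 10$.

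The main obstacle I anticipate is the bookkeeping in the exponent-of-$P$ equation: one must correctly decide, for each odd factor $\sigma(x^{2r})$, $(\sigma(x^{u-1}))^{2^{\alpha}}$, $\sigma((x+1)^{2s})$, $(\sigma((x+1)^{v-1}))^{2^{\beta}}$, whether it equals $P$, equals $Q$, or is constant, and then track the resulting power of $P$ so that the constraint $c\in\{2,2^{\gamma}-1\}$ with $2^{\nu}\leq c$ is genuinely forced rather than merely consistent. This case analysis, together with the irreducibility test of Lemma \ref{lesQirreductibles}-ii, which is the arithmetic heart, is where the values $\nu\leq 2$, $c\in\{1,2,3,7\}$ and the two admissible pairs $(P,Q)$ crystallize; once those are fixed, the degree and exponent comparisons that yield the bounds on $a$ and $b$ are routine.
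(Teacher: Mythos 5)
Your overall strategy is the paper's: locate $Q$ among the odd blocks of $\sigma^{**}(x^a)\,\sigma^{**}((x+1)^b)$, deduce $d=1$, and invoke Lemma \ref{lesQirreductibles}-ii) to pin down $(P,\nu)$. But two steps that the paper actually proves are absent from your sketch, and the conclusions $d=1$, $b\le 10$, $c\in\{1,2,3,7\}$, $\nu\le 2$ all hinge on them. The first is the alternative $Q=\sigma(x^{u-1})$, which you mention parenthetically (``the less economical alternative'') and then silently drop. In that case $Q$ occurs in the block $(\sigma(x^{u-1}))^{2^{\alpha}}$, so its exponent in $\sigma^{**}(A)$ is $2^{\alpha}$, not $1$, and your assertion that $Q$ ``appears exactly once\dots forcing $d=1$'' fails outright. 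The paper eliminates this case by a real argument: $d=2^{\alpha}\le 3$ (Lemma \ref{cegal2ou2alpha-1}) forces $d=2$; then $P=\sigma(x^{2r})=\sigma((x+1)^{2s})$ forces $2r=2s=2$; and counting the exponent of $P$ gives $c=2+w_2d=2+2w_2\ge 4$, an even value $\ge 4$, contradicting $c\in\{2,2^{\gamma}-1\}$. Without this elimination, $d=1$ is unproven.

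Second, your route to $b\le 10$ points in the wrong direction. The identity $b=2^{\alpha}+v_1c+(2^{\nu}-1)d$ only bounds $b$ from \emph{below}; with $P=M_4$ (so $v_1=3$) and the forced $c=2^{\nu}+\varepsilon_1 2^{\alpha}+1\ge 3$ it yields $b\ge 12$, so it can never ``bound $b\le 10$''. The paper obtains $b\le 10$ structurally: since $b$ is even, the nonconstant, odd, squarefree block $\sigma((x+1)^{2s})$ divides $A$; it cannot equal $Q$ (Lemma \ref{lessigmadistincts}-ii)), nor $PQ$, nor be divisible by $Q$, hence $\sigma((x+1)^{2s})=P$, and Lemma \ref{complete2}-vi) gives $2s\in\{2,4\}$, i.e.\ $b\in\{4,6,8,10\}$. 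This identification, which you never make (it also yields $v=1$ and the exact count $c=2^{\nu}+\varepsilon_1 2^{\alpha}+1$), powers the whole chain: $2^{\alpha}+c\le b\le 10$ gives $c\in\{1,2,3,7\}$ and $\alpha\le 2$, then $2^{\nu}\le c-1\le 6$ gives $\nu\le 2$, which in particular supplies the hypothesis $\nu\le 10$ of Lemma \ref{lesQirreductibles}-ii) that you invoke but never verify; your fallback $2^{\nu}=w_2d\le c$ with $c=2$ or $c=2^{\gamma}-1$ gives no upper bound on $\nu$ at all, since $c$ is not yet bounded. The parts of your sketch that do agree with the paper ($2r=2m$ by degree matching, hence $a\in\{4m,4m+2\}$ and $20\le a\le 26$ once $(P,\nu)$ is fixed) are fine, but as written the proposal proves neither $d=1$ nor the bounds on $b$, $c$, $\nu$.
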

\begin{proof}
One has: $Q \in \{\sigma(x^{2r}), \sigma(x^{u-1})\}$. \\
- If $Q = \sigma(x^{2r})$, then $Q \not= \sigma((x+1)^{2s})$ (by Lemma \ref{lessigmadistincts}-ii)), $Q$ does not divide  $\sigma(x^{u-1})$ since $\gcd(\sigma(x^{2r}),\sigma(x^{u-1})) = 1$. So, $Q \| \sigma^{**}(A)$. Therefore, $d=1$, $P=\sigma((x+1)^{2s})$, $\sigma(x^{u-1}) \in \{1,P\}$,
$u \in \{1,3,5\}$, $v=1$, $2s\leq 4$, $b\leq 10$,
$c=2^{\nu} + \varepsilon_1 2^{\alpha} + 1 \geq 3$. 
Since $2^{\alpha} + c\leq b \leq 10$, we get: $c \in \{1,2,3,7\}$, $\alpha \leq 2$, $\nu \leq 2$.\\
Here, $Q=1+x(x+1)^{2^{\nu}-1}P^{2^{\nu}}$, with
$P \in \{M_1, M_4\}$ and $\nu \leq 2$. By Lemma \ref{lesQirreductibles}-ii), one has: $(P=M_1$,
$\nu=2$ and $2r=12)$ or $(P=M_4$, $\nu=1$ and $2r=10)$. So, $20 \leq a \leq 26$.\\
- If $Q = \sigma(x^{u-1})$, then $2^{\alpha} = d \leq 3$ and $P = \sigma(x^{2r}) = \sigma((x+1)^{2s})$. Thus, $d=2$, $2r=2s=2$, $a,b \in \{4,6\}$, $c = 2 + w_2d = 2+2w_2 \geq 4$. It contradicts Lemma \ref{cegal2ou2alpha-1}.
\end{proof}

\begin{lemma}
The case where $a$ is even and $b$ odd does not happen.
\end{lemma}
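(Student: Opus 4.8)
The plan is to rule out this configuration by the same exponent‑matching bookkeeping used in the two preceding cases. Throughout I use the factorisations in (\ref{lessigmastarall}) and (\ref{lessigmastar-add3}) together with the data already fixed for this subsection: $P\in\{M_1,M_4\}$, $Q=1+x(x+1)^{2^{\nu}-1}P^{2^{\nu}}$, $u_1=u_2=1$, $v_1\in\{1,3\}$, $v_2=2^{\nu}-1$, $w_2=2^{\nu}$. Since $b$ is odd I write $b=2^{\beta}v-1$ ($v$ odd) and, since $a$ is even and $a\ge 4$, I write $a=4r$ or $a=4r+2$, so that $\sigma^{**}(x^a)=(1+x)^{2^{\alpha}}\sigma(x^{2r})(\sigma(x^{u-1}))^{2^{\alpha}}$ while $\sigma((x+1)^{v-1})$ is the only odd factor coming from $\sigma^{**}((x+1)^b)=x^{2^{\beta}-1}(\sigma((x+1)^{v-1}))^{2^{\beta}}$. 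By Lemma \ref{cegal2ou2alpha-1} one has $\sigma^{**}(P^c)=(1+P)^c$ and $\sigma^{**}(Q^d)=(1+Q)^d$, so neither of these last two factors contributes a factor $Q$.

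The first step is to locate the factor $Q$. Since $Q$ and $PQ$ are not of the form $\sigma((x+1)^{2g})$ (Lemmas \ref{lessigmadistincts} and \ref{factorifQ=sigmx2m}) and every $\sigma((x+1)^{2s})$ dividing $A$ is odd and squarefree (Lemma \ref{squarefreeMers}), the factor $\sigma((x+1)^{v-1})$ lies in $\{1,P\}$; hence $Q$ can arise only from $\sigma(x^{2r})$ or from $\sigma(x^{u-1})$, which are coprime. This splits the argument into two cases. If $Q=\sigma(x^{u-1})$, its exponent is $2^{\alpha}=d\le 3$, forcing $\alpha=1$, $d=2$ and $\sigma(x^{2r})=P$; then $P=\sigma(x^{2r})$ is Mersenne, so $2r\in\{2,4\}$ by Lemma \ref{complete2}-vi), while $Q=\sigma(x^{u-1})=\sigma(x^{2m})$ forces $u-1=2m\ge 10$, contradicting $2u\le 2r+2\le 6$. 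So only the case $Q=\sigma(x^{2r})$ survives, and there $Q$ has exponent $1$, whence $d=1$.

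In this surviving case the irreducibility of $Q$ (Lemma \ref{lesQirreductibles}-ii), together with the analysis of $Q=\sigma(x^{2m})$ in Lemma \ref{factorifQ=sigmx2m}, pins the data down to $(P,\nu,2m)=(M_1,2,12)$ or $(M_4,1,10)$; matching $\sigma(x^{2r})$ to the middle factor forces $a=24$ (resp. $a=20$), since the alternative $a=4r+2$ would make $\sigma(x^{u-1})$ equal to $M_2M_3$ (resp. $M_1$), neither of which lies in $\{1,P\}$. In each surviving case $\sigma(x^{u-1})=P$, so $\varepsilon_1=1$. I then read off two exponent equations. Comparing the exponent of $x$ gives $a=(2^{\beta}-1)+c+d=2^{\beta}+c$, and comparing the exponent of $P$ gives $c=2^{\alpha}+\varepsilon_2 2^{\beta}+2^{\nu}$. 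Eliminating $2^{\beta}$ with the known values of $a,\alpha,\nu$ yields $(1+\varepsilon_2)2^{\beta}=16$, hence $c\in\{8,16\}$ when $P=M_1$ and $c\in\{4,12\}$ when $P=M_4$. None of these values is of the form $2$ or $2^{\gamma}-1$, contradicting Lemma \ref{cegal2ou2alpha-1}; this kills every configuration, so the case cannot occur.

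I expect the main obstacle to be the careful bookkeeping of where the irreducible factors $P$ and $Q$ originate among the four factors $\sigma^{**}(x^a)$, $\sigma^{**}((x+1)^b)$, $\sigma^{**}(P^c)$ and $\sigma^{**}(Q^d)$ — in particular verifying that $\sigma((x+1)^{v-1})\in\{1,P\}$ so that $Q$ must come from the $x$‑side, and that the two locations for $Q$ are exhaustive. Once that is secured, the decisive point is purely numerical: the forced value of $c$ never has the admissible shape $2$ or $2^{\gamma}-1$.
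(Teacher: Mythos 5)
Your overall strategy is the same as the paper's: split according to whether $Q=\sigma(x^{2r})$ or $Q=\sigma(x^{u-1})$, verify that $\sigma((x+1)^{v-1})\in\{1,P\}$ so that $Q$ must come from the $x$-side, and then contradict either the size of $a$ or the admissible shape of $c$. Your treatment of the branch $Q=\sigma(x^{u-1})$ is correct and essentially identical to the paper's (you phrase the size contradiction through $u$ rather than $a$, which is equivalent). However, in the main branch $Q=\sigma(x^{2r})$ there is a genuine gap: you invoke Lemma \ref{lesQirreductibles}-ii) to conclude $(P,\nu,2m)\in\{(M_1,2,12),(M_4,1,10)\}$ \emph{before} bounding $\nu$. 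That lemma is proved by a finite (Maple) computation and its hypothesis explicitly requires $\nu\le 10$; as written, your argument says nothing about a hypothetical irreducible $Q=1+x(x+1)^{2^{\nu}-1}P^{2^{\nu}}$ with $\nu\ge 11$, so such configurations are not excluded by your chain ``irreducibility $\Rightarrow (P,\nu) \Rightarrow a \Rightarrow c$''. There is also no cheap a priori bound: $c\ge 2^{\nu}$ and $c\le\min(a,b)$ do not conflict, since $a\approx 2\deg Q$ grows with $\nu$. The paper is careful on exactly this point: wherever Lemma \ref{lesQirreductibles}-ii) is applied, a bound on $\nu$ has been derived first.

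The repair is short and is in fact the paper's own route, using ingredients you already have. In the branch $Q=\sigma(x^{2r})$ you know $d=1$, so the exponent of $P$ gives $c=\varepsilon_1 2^{\alpha}+\varepsilon_2 2^{\beta}+2^{\nu}$, which is even; combined with Lemma \ref{cegal2ou2alpha-1} ($c=2$ or $c=2^{\gamma}-1$) this forces $c=2$, hence $\nu=1$ and $\varepsilon_1=\varepsilon_2=0$, i.e.\ $u=v=1$. Only now does Lemma \ref{lesQirreductibles}-ii) apply legitimately, giving $P=M_4$, $2r=10$, $a\in\{20,22\}$, and the $x$-exponent $a=2^{\beta}-1+u_1c+u_2d=2^{\beta}+2$ yields the contradiction. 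Note two side effects: this shows your branch $(M_1,2,12)$ is vacuous, and in your surviving cases the equality $\varepsilon_1=1$ already makes $c=2^{\alpha}+\varepsilon_2 2^{\beta}+2^{\nu}\ge 4$ and even, contradicting $c\in\{2,\,2^{\gamma}-1\}$ at once — so your elimination of $2^{\beta}$ via $a=2^{\beta}+c$ to get $c\in\{8,16\}$ or $c\in\{4,12\}$, while arithmetically correct, is longer than necessary. Once $\nu$ is properly bounded, the rest of your bookkeeping (the location of the factors, $d=1$, the exclusion of $a=4r+2$ through $\sigma(x^{u-1})\notin\{1,P\}$, and both exponent equations) checks out.
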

\begin{proof}
If $a$ is even and $b$ odd, then  $Q \in \{\sigma(x^{2r}), \sigma(x^{u-1})\}$. \\
- If $Q=\sigma(x^{2r})$, then $d=1$, $\sigma(x^{u-1}), \sigma((x+1)^{v-1}) \in \{1,P\}$,
$u,v \in \{1,3,5\}$, $w_2d = 2^{\nu}$, $c=2^{\nu} + \varepsilon_1 2^{\alpha} + \varepsilon_2 2^{\beta}$ is even.\\
Therefore, $c=2$, $\nu=1$, $\varepsilon_1=\varepsilon_2=0$ and $u=v=1$.\\
By Lemma \ref{lesQirreductibles}-ii), since $\nu=1$, one has: $P=M_4$ and thus $v_1=3, v_2 = 1, w_2 = 2$,
$2r=\deg(Q)=2^{\nu}(1+\deg(P)) = 2^{\nu} \cdot 5=10$. We get the contradiction: $a \in \{20,22\}$ and
$a=2^{\beta}-1+2u_1+u_2=2^{\beta}-1+2+1=2^{\beta}+2$.\\
- If $Q = \sigma(x^{u-1})$, then $a > u-1 = 2m \geq 10$, $P = \sigma(x^{2r})$, $2^{\alpha} = d \leq 3$. Hence, $d = 2$, $2r \leq 4$, $a \in \{4,6,8,10\}$. We get the contradiction: $a > 10 \geq a$.
\end{proof}

\begin{corollary} \label{synthese5}
If $A$ is b.u.p., with $Q$ of the form $\sigma(x^{2m})$, then\\
$\begin{array}{l}
\text{$(P,Q) = (M_1, 1+x(x+1)^{3}{M_1}^{4})$ or $(P,Q)= (M_4, 1+x(x+1){M_4}^{2}),$}\\
\text{$a,b \in \{4,6,8,10,20,22,24,26\}$, $c \in \{1,2,3,7\},\ d=1$}.
\end{array}$
\end{corollary}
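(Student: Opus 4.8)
The plan is to read Corollary \ref{synthese5} as the synthesis of the four results already proved in this subsection, organized by the parity of the pair $(a,b)$. First I would record the structural data in force: by Lemma \ref{factorifQ=sigmx2m} we are in the situation $P \in \{M_1, M_4\}$ and $Q = 1 + x(x+1)^{2^{\nu}-1}P^{2^{\nu}}$ for some $\nu \geq 1$, so that $u_1 = u_2 = 1$, $v_2 = 2^{\nu}-1$ and $w_2 = 2^{\nu}$; moreover $Q \neq \sigma((x+1)^{2m})$ by Lemma \ref{lessigmadistincts}, and $Q, \overline{Q} \notin \{\sigma(P^{2g}): g \geq 1\}$. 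By Lemma \ref{cegal2ou2alpha-1} I already have $c \in \{2, 2^{\gamma}-1\}$ and $d \leq 3$, which I will sharpen to $d = 1$ in the surviving parity case.

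Next I would eliminate every parity configuration except the productive one. The lemma stating that at least one of $a, b$ is even kills the case $a, b$ both odd, and the lemma stating that $a$ even, $b$ odd does not happen removes that configuration. This leaves $a, b$ both even together with the mixed configuration $a$ odd, $b$ even, which is genuinely distinct here (barring sends this case to $\overline{Q} = \sigma((x+1)^{2m})$, not back to $Q = \sigma(x^{2m})$, so symmetry does not dispose of it). For this mixed case I would run the same comparison-of-exponents argument: since $Q$ can come neither from the $(x+1)$-part (Lemma \ref{lessigmadistincts}) nor from $\sigma^{**}(P^c)$ (as $Q \neq \sigma(P^{2g})$), it must arise from $\sigma^{**}(x^a) = (1+x)^{2^{\alpha}-1}(\sigma(x^{u-1}))^{2^{\alpha}}$, forcing $Q = \sigma(x^{u-1})$ and $d = 2^{\alpha} = 2$ (using $d \leq 3$). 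Because $u_1 = u_2 = 1$, matching the exponent of $x$ gives $a = 2^{\beta} + c + d$, whence $a$ odd makes $c$ odd, i.e. $c = 2^{\gamma}-1$; matching the exponent of $P$ gives $c = w_2 d + \varepsilon = 2^{\nu+1} + \varepsilon$ with $\varepsilon \in \{0, 1, 2^{\beta}, 1+2^{\beta}\}$. Running this through the two admissible pairs from Lemma \ref{lesQirreductibles}-ii) (namely $(\nu = 1, M_4)$ and $(\nu = 2, M_1)$) leaves no value of $c$ of the form $2^{\gamma}-1$, so the mixed case collapses.

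With $a, b$ both even I would invoke the corresponding (unlabelled) lemma verbatim: it yields $d = 1$, $c \in \{1,2,3,7\}$, $\nu \leq 2$, $b \leq 10$, $20 \leq a \leq 26$, and pins $(P,Q)$ to $(M_1,\, 1+x(x+1)^{3}M_1^{4})$ (the subcase $\nu = 2$) or $(M_4,\, 1+x(x+1)M_4^{2})$ (the subcase $\nu = 1$), the irreducibility of $Q$ coming from Lemma \ref{lesQirreductibles}-ii). Finally I would collate the numerical ranges: here $a$ is confined to $\{20,22,24,26\}$ and $b$ to $\{4,6,8,10\}$ (recall $a, b \geq 4$), so both certainly lie in the common set $\{4,6,8,10,20,22,24,26\}$, which is exactly the assertion.

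The main obstacle is the mixed-parity bookkeeping: the configuration $a$ odd, $b$ even is the one not granted a dedicated lemma above, and ruling it out requires using the asymmetry forced by $u_1 = u_2 = 1$ to show that $Q$ must originate from $\sigma^{**}(x^a)$ and then extracting the contradiction from the admissible exponents of $P$. Everything else is the routine matching of the exponents of $x$, $x+1$, $P$ and $Q$ on the two sides of $\sigma^{**}(A) = A$.
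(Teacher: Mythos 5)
Your overall route is the paper's own: you import the standing hypotheses from Lemma \ref{factorifQ=sigmx2m}, invoke Lemma \ref{cegal2ou2alpha-1}, defer to the section's three parity lemmas, and collate the numerical ranges; all of that is sound. You are also right --- and in fact more careful than the paper here --- that the configuration ($a$ odd, $b$ even) is not covered by those three lemmas and cannot be removed by the substitution $x \mapsto x+1$: barring turns $Q=\sigma(x^{2m})$ into $\overline{Q}=\sigma((x+1)^{2m})$, which by Lemma \ref{lessigmadistincts}-ii) is never again of the form $\sigma(x^{2m'})$, so the symmetry has already been spent in reducing to the three cases. The paper's proof of Corollary \ref{synthese5} silently omits this configuration, and your proposal correctly flags that it needs a dedicated argument.

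However, your disposal of that mixed case fails as written. You allow $\varepsilon \in \{0,1,2^{\beta},1+2^{\beta}\}$ in $c = 2^{\nu+1}+\varepsilon$ and claim no resulting $c$ has the form $2^{\gamma}-1$; but for $(\nu,P)=(1,M_4)$, $\beta=1$, $\varepsilon=1+2^{\beta}=3$, one gets $c = 4+3 = 7 = 2^{3}-1$, which passes your test, so the announced contradiction does not materialize. The repair is short: since $b$ is even, the factor $\sigma((x+1)^{2s})$ is genuinely present in $\sigma^{**}((x+1)^b)$; being odd and squarefree (Lemma \ref{squarefreeMers}) it lies in $\{P,Q,PQ\}$ (Lemma \ref{lesodddivisors}), and $Q$, $PQ$ are excluded (Lemmas \ref{lessigmadistincts}-ii) and \ref{factorifQ=sigmx2m}), so $\sigma((x+1)^{2s})=P$; then the coprimality $\gcd(\sigma((x+1)^{2s}),\sigma((x+1)^{v-1}))=1$ of Corollary \ref{expressigmastar}-i) forces $\sigma((x+1)^{v-1})=1$, whence $\varepsilon = 1$ exactly and $c = 2^{\nu+1}+1 \in \{5,9\}$, which is neither $2$ nor of the form $2^{\gamma}-1$ --- the desired contradiction. (Alternatively, your own equation $a = 2^{\beta}+c+d$ closes the stray subcase: $\deg P = 2s \leq 4$ gives $b \leq 10$, hence $\beta \leq 2$, hence $a \leq 4+9+2 = 15$, while $a = 2u-1$ with $u-1 = \deg Q \in \{10,12\}$ gives $a \in \{21,25\}$.) With that one step repaired, your proof is complete and coincides with the paper's elsewhere.
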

\subsubsection{Case where $Q = \sigma(P^{2m})$, for some $m \geq 1$} \label{caseQ=sigmP2r}
Lemma \ref{factorifQ=sigmP2m} implies that $Q, PQ \not\in  \{\sigma(x^{2g}), \sigma((x+1)^{2g}): g \geq 1\}$.
 $P =M_1$ and $(Q =\sigma(P^2) = 1+x(x+1)P$ or
$Q=\sigma(P^4) = 1+x^3(x+1)^3P)$. Thus, $u_1=v_1=1$, $u_2 = v_2 \in \{1,3\}, w_2 = 1$.

We refer to Relations in (\ref{lessigmastaralla}), in (\ref{lessigmastarallb}) and in (\ref{lessigmastar-add3}). Lemma \ref{lesodddivisors} is also useful.
\begin{lemma} \label{lesabcd1}
The integer $a+b$ is odd, $a,b \leq 11$, $c\leq 8$ and $d\leq 3$.
\end{lemma}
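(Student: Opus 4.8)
The plan is to compare, on both sides of the identity $\sigma^{**}(A)=A$, the exponent of each of the four irreducible factors $x$, $x+1$, $P$ and $Q$. From Lemma~\ref{factorifQ=sigmP2m} we already know that $P=M_1$, that $1+P=x(x+1)$, that $m\in\{1,2\}$, and that $1+Q=x^{u_2}(x+1)^{u_2}P$ with $u_2=v_2\in\{1,3\}$ and $w_2=1$; by Lemma~\ref{valuesofd}-ii) we also have $\sigma^{**}(Q^d)=(1+Q)^d=x^{u_2d}(x+1)^{u_2d}P^{d}$. Thus $\sigma^{**}(x^a)$, $\sigma^{**}((x+1)^b)$ and $\sigma^{**}(P^c)$ all split into $x,x+1,P,Q$ through Relations~(\ref{lessigmastarall}) and~(\ref{lessigmastar-add3}), and the whole argument reduces to four scalar equations in the exponents. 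Since $P=\overline P$ and $Q=\overline Q$, replacing $A$ by $\overline A$ (legitimate by Lemma~\ref{translation}) only swaps $a\leftrightarrow b$, so I may argue up to this symmetry.

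First I would control the contribution of $\sigma^{**}(x^a)$ (and symmetrically of $\sigma^{**}((x+1)^b)$) to the exponent of $P$. When $a$ is even its expansion contains the odd squarefree factor $\sigma(x^{2r})$, which by Lemma~\ref{squarefreeMers} and Lemma~\ref{lesodddivisors}-i) lies in $\{P,Q,PQ\}$; since $Q,PQ\notin\{\sigma(x^{2g})\}$ by Lemma~\ref{factorifQ=sigmP2m}, it equals $P=\sigma(x^2)$, forcing $2r=2$ and hence $a\in\{4,6\}$, with $P$-content exactly $1$. When $a$ is odd the only odd factor of $\sigma^{**}(x^a)$ is $(\sigma(x^{u-1}))^{2^{\alpha}}$, and the same reasoning places $\sigma(x^{u-1})$ in $\{1,P\}$, so its $P$-content is $2^{\alpha}$ or $0$ --- in either case even. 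Hence the $P$-content of $\sigma^{**}(x^a)$ is odd precisely when $a$ is even, and likewise for $b$.

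The second, and key, step is to read off the exponent of $Q$. Since $Q$ is of neither form $\sigma(x^{2g})$ nor $\sigma((x+1)^{2g})$, it can only arise from $\sigma^{**}(P^c)$. Dissecting $\sigma^{**}(P^c)$ via Corollary~\ref{expressigmastar}, its odd factors are $\sigma(P^{2t})$ and $\sigma(P^{w-1})$, each of which (being squarefree and coprime to $P$, by Lemma~\ref{aboutsigmastar2}-ii) and Lemma~\ref{lesodddivisors}-i)) equals $1$ or $Q=\sigma(P^{2m})$; the latter forces $t=m$, resp. $w-1=2m$. A short power-of-two computation then shows that the exponent of $Q$ in $\sigma^{**}(P^c)$ is $1$ when $c$ is even and $2^{\gamma}$ when $c$ is odd, the value $1+2^{\gamma}$ (both factors equal to $Q$) being arithmetically impossible. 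Matching this with $d\in\{2,2^{\delta}-1\}$ (Lemma~\ref{valuesofd}-ii)) yields $d\in\{1,2\}$, so $d\le 3$, together with the crucial \emph{parity link}: $d$ is odd exactly when $c$ is even. Moreover $c$ must be large enough for $Q$ to appear at all, so $c\ge 4$, and using $m\in\{1,2\}$ one confines $c$ to the finite set $\{4,5,6,8,9\}$.

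Finally I would compare the exponents of $P$: here $c$ equals the sum of the $P$-contents of $\sigma^{**}(x^a)$ and $\sigma^{**}((x+1)^b)$ plus the $d$ coming from $(1+Q)^d$, so by the second paragraph $c-d$ has the parity of the number of even exponents among $a,b$. The parity link forces $c\not\equiv d\pmod 2$, so that number is odd: exactly one of $a,b$ is even, i.e. $a+b$ is odd. Up to the $\overline{(\cdot)}$ symmetry I then take $a$ even, so $a\in\{4,6\}$, and the $P$-equation becomes $c=1+\varepsilon\,2^{\beta}+d$ where $b=2^{\beta}v-1$, $v\in\{1,3\}$ and $\varepsilon=1$ iff $v=3$. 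The case $v=1$ gives $c\le 3<4$ and is excluded; for $v=3$, matching $c=2^{\beta}+2+(d-1)$ against the finite list $\{4,5,6,8,9\}$ forces $\beta\le 2$, whence $b=3\cdot2^{\beta}-1\le 11$ and $c\le 6\le 8$, while $a\le 6\le 11$ and $d\le 2\le 3$. The main obstacle is the bookkeeping of the third paragraph --- extracting the exact multiplicity of $Q$ in $\sigma^{**}(P^c)$ and deriving, through the power-of-two arithmetic constrained by $m\in\{1,2\}$, both the parity link and the finiteness of the admissible $c$; everything else is a comparison of exponents.
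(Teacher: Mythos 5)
Your proof is correct and takes essentially the same route as the paper's: a bookkeeping of the exponents of $x$, $x+1$, $P$ and $Q$ on both sides of $\sigma^{**}(A)=A$, driven by Relations (\ref{lessigmastarall}) and (\ref{lessigmastar-add3}) together with Lemmas \ref{valuesofd}, \ref{lesodddivisors} and \ref{factorifQ=sigmP2m}, split according to the parities of $a$ and $b$. Your repackaging of the two excluded cases ($a,b$ both even or both odd) into a single parity-of-$P$-content argument, and your derivation of $\beta\le 2$ from the finite admissible list $c\in\{4,5,6,8,9\}$ instead of the paper's $x$-exponent bound $2^{\beta}-1\le a\le 6$, are minor streamlinings of the same computation (and in fact yield slightly sharper conclusions, e.g.\ $d\in\{1,2\}$ and $v=3$ forced, than the stated bounds).
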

\begin{proof}
If $c$ is even, then $2m=2t \geq 2$, $\sigma(P^{2t}) = Q$. So, $w=1, d=1$. If $c$ is
odd, then $Q=\sigma(P^{w-1}), w \in \{3, 5\}, d=2^{\gamma}$.\\
- If $a$ and $b$ are even, then $a, b\geq 4$ (by Lemma \ref{valuesofd}-iv)), $P=\sigma(x^{2r})= \sigma((x+1)^{2s})$.  Hence, $u=v=1$, $2r=2s=2$,
$a,b \leq 6$ and $c=2+d$ (by considering the exponents of $P$).
We get a contradiction on the value of $c$.\\
- If $a$ and $b$ are odd, then $\sigma(x^{u-1}), \sigma((x+1)^{v-1}) \in \{1, P\}$, so that $u,v\leq 3$.\\
Moreover, if $c$ is even, then $\sigma(P^{2t})=Q$, $w=1$, $d=1$ and $c\in \{1,1+2^{\alpha},1+2^{\beta},
1+2^{\alpha}+2^{\beta}\}$. It contradicts the parity of $c$.
If $c$ is odd, then $Q=\sigma(P^{w-1})$, $w \in \{3, 5\}$,
$d=2^{\gamma}$, so that $d=2$ and $c\in \{2,2+2^{\alpha},2+2^{\beta},
2+2^{\alpha}+2^{\beta}\}$. We also get a contradiction on the value of $c$.\\
- If $a$ is even and $b$ odd, then $a \geq 4$ (Lemma \ref{valuesofd}), $\sigma(x^{2r}) =P=M_1$, $u=1$, $2r=2$, $a\leq 6$. Moreover,
$\sigma((x+1)^{v-1}) \in \{1, P\}$, so $v\leq 3$.
We get $\beta \leq 2$, $b\leq 11$, $d \leq 3$ and $c\leq 8$ because $2^{\beta}-1 \leq a\leq 6$, $d \leq a \leq 6$ and $c \in \{1+d,1+2^{\beta}+d\}$.\\
The proof is similar if $a$ is odd and $b$ even.
\end{proof}

\begin{corollary} \label{synthese3}
If $A$ is b.u.p., with $Q$ of the form $\sigma(P^{2m})$, then
$P=M_1$, $Q \in \{1+x(x+1)P, 1+x^3(x+1)^3P\}$,
$a+b$ is odd, $a,b \leq 11$, $c \leq 8, d \leq 3$.
\end{corollary}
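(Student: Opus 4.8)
The plan is to assemble Corollary~\ref{synthese3} directly from the two structural results already established in Section~\ref{caseQ=sigmP2r}, since under the standing hypothesis $Q=\sigma(P^{2m})$ the determination of the pair $(P,Q)$ and the numerical bounds on the exponents have been obtained separately. First I would invoke Lemma~\ref{factorifQ=sigmP2m}: it forces $P=M_1$ and $Q=\sigma(P^2)=1+x(x+1)P$ or $Q=\sigma(P^4)=1+x^3(x+1)^3P$, together with the bookkeeping consequences $u_1=v_1=1$, $u_2=v_2\in\{1,3\}$, $w_2=1$. This settles the first two assertions of the corollary verbatim.

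Next I would invoke Lemma~\ref{lesabcd1} for the remaining claims: $a+b$ is odd, $a,b\le 11$, $c\le 8$ and $d\le 3$. The one point worth flagging is the source of ``$a+b$ odd''. The proof of Lemma~\ref{lesabcd1} runs through the three parity patterns of $(a,b)$, and both the ``both even'' and the ``both odd'' patterns collapse under a \emph{parity} contradiction on $c$. Indeed, comparing the exponent of $P$ on the two sides of $\sigma^{**}(A)=A$ expresses $c$ in terms of $d$ and the even quantities $2^{\alpha},2^{\beta}$ (recall $\alpha,\beta\ge 1$): in the ``both even'' pattern one gets $c=2+d$, while in the ``both odd'' pattern one gets $c\in\{1,1+2^{\alpha},1+2^{\beta},1+2^{\alpha}+2^{\beta}\}$ when $d=1$ and $c\in\{2,2+2^{\alpha},2+2^{\beta},2+2^{\alpha}+2^{\beta}\}$ when $d=2$. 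In each case the parity of the resulting expression is opposite to the parity of $c$ assumed at the outset, so both patterns are impossible. Only the mixed case ($a$ even, $b$ odd, and symmetrically $a$ odd, $b$ even) survives, which is exactly $a+b$ odd; and that surviving case yields the stated bounds $a,b\le 11$, $c\le 8$, $d\le 3$.

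There is no genuine obstacle at this level. All the substantive work---fixing $(P,Q)$ via the irreducibility test of Lemma~\ref{lesQirreductibles} and via comparing the exponents of $x$, $x+1$, $P$ and $Q$ in $\sigma^{**}(A)=A$---has already been carried out in Lemmas~\ref{factorifQ=sigmP2m} and~\ref{lesabcd1}. The corollary is merely their synthesis, so the proof reduces to observing that the standing hypotheses of Section~\ref{caseQ=sigmP2r} are precisely the hypotheses of both lemmas, and then collecting and merging their conclusions into the single statement.
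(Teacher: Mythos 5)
Your proposal is correct and follows exactly the paper's route: Corollary~\ref{synthese3} is stated without a separate proof precisely because it is the synthesis of Lemma~\ref{factorifQ=sigmP2m} (giving $P=M_1$ and $Q\in\{1+x(x+1)P,\,1+x^3(x+1)^3P\}$) with Lemma~\ref{lesabcd1} (giving $a+b$ odd, $a,b\leq 11$, $c\leq 8$, $d\leq 3$). Your reconstruction of where ``$a+b$ odd'' comes from --- the parity contradictions on $c$ in the ``both even'' case ($c=2+d$ with $d=1$ or $d=2$ of the wrong parity) and in the ``both odd'' case ($c\in\{1,1+2^{\alpha},1+2^{\beta},1+2^{\alpha}+2^{\beta}\}$ or $c\in\{2,2+2^{\alpha},2+2^{\beta},2+2^{\alpha}+2^{\beta}\}$) --- matches the paper's proof of Lemma~\ref{lesabcd1} faithfully.
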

\subsubsection{Maple Computations} \label{compute}
{\bf{The function $\sigma^{**}$ is defined as}} Sigm2star, for the Maple code.
\begin{verbatim}
> Sigm2star1:=proc(S,a) if a=0 then 1;else if a mod 2 = 0
then n:=a/2:sig1:=sum(S^l,l=0..n):sig2:=sum(S^l,l=0..n-1):
Factor((1+S)*sig1*sig2) mod 2:
else Factor(sum(S^l,l=0..a)) mod 2:fi:fi:end:
> Sigm2star:=proc(S) P:=1:L:=Factors(S) mod 2:k:=nops(L[2]):
for j to k do S1:=L[2][j][1]:h1:=L[2][j][2]:
P:=P*Sigm2star1(S1,h1):od:P:end:
\end{verbatim}
 We search all $S=x^a(x+1)^b P^cQ^d$ such that $\sigma^{**}(S)= S$.
We apply Corollaries \ref{synthese4},  \ref{synthese5} and \ref{synthese3}.\\
1) If $Q$ or $PQ$ is of the form $\sigma(x^{2m})$, then we obtain no b.u.p. polynomials.\\
2) If $Q$ is of the form $\sigma(P^{2m})$, then we get $D_{1}$,  $D_{2}$, $\overline{D}_1$ and $\overline{D}_2$.
\def\thebibliography#1{\section*{\titrebibliographie}
\addcontentsline{toc}
{section}{\titrebibliographie}\list{[\arabic{enumi}]}{\settowidth
 \labelwidth{[
#1]}\leftmargin\labelwidth \advance\leftmargin\labelsep
\usecounter{enumi}}
\def\newblock{\hskip .11em plus .33em minus -.07em} \sloppy
\sfcode`\.=1000\relax}
\let\endthebibliography=\endlist

\def\biblio{\def\titrebibliographie{References}\thebibliography}
\let\endbiblio=\endthebibliography




\newbox\auteurbox
\newbox\titrebox
\newbox\titrelbox
\newbox\editeurbox
\newbox\anneebox
\newbox\anneelbox
\newbox\journalbox
\newbox\volumebox
\newbox\pagesbox
\newbox\diversbox
\newbox\collectionbox
\def\fabriquebox#1#2{\par\egroup
\setbox#1=\vbox\bgroup \leftskip=0pt \hsize=\maxdimen \noindent#2}
\def\bibref#1{\bibitem{#1}


\setbox0=\vbox\bgroup}
\def\auteur{\fabriquebox\auteurbox\styleauteur}
\def\titre{\fabriquebox\titrebox\styletitre}
\def\titrelivre{\fabriquebox\titrelbox\styletitrelivre}
\def\editeur{\fabriquebox\editeurbox\styleediteur}

\def\journal{\fabriquebox\journalbox\stylejournal}

\def\volume{\fabriquebox\volumebox\stylevolume}
\def\collection{\fabriquebox\collectionbox\stylecollection}
{\catcode`\- =\active\gdef\annee{\fabriquebox\anneebox\catcode`\-
=\active\def -{\hbox{\rm
\string-\string-}}\styleannee\ignorespaces}}
{\catcode`\-
=\active\gdef\anneelivre{\fabriquebox\anneelbox\catcode`\-=
\active\def-{\hbox{\rm \string-\string-}}\styleanneelivre}}
{\catcode`\-=\active\gdef\pages{\fabriquebox\pagesbox\catcode`\-
=\active\def -{\hbox{\rm\string-\string-}}\stylepages}}
{\catcode`\-
=\active\gdef\divers{\fabriquebox\diversbox\catcode`\-=\active
\def-{\hbox{\rm\string-\string-}}\rm}}
\def\ajoutref#1{\setbox0=\vbox{\unvbox#1\global\setbox1=
\lastbox}\unhbox1 \unskip\unskip\unpenalty}
\newif\ifpreviousitem
\global\previousitemfalse
\def\separateur{\ifpreviousitem {,\ }\fi}
\def\voidallboxes
{\setbox0=\box\auteurbox \setbox0=\box\titrebox
\setbox0=\box\titrelbox \setbox0=\box\editeurbox
\setbox0=\box\anneebox \setbox0=\box\anneelbox
\setbox0=\box\journalbox \setbox0=\box\volumebox
\setbox0=\box\pagesbox \setbox0=\box\diversbox
\setbox0=\box\collectionbox \setbox0=\null}
\def\fabriquelivre
{\ifdim\ht\auteurbox>0pt
\ajoutref\auteurbox\global\previousitemtrue\fi
\ifdim\ht\titrelbox>0pt
\separateur\ajoutref\titrelbox\global\previousitemtrue\fi
\ifdim\ht\collectionbox>0pt
\separateur\ajoutref\collectionbox\global\previousitemtrue\fi
\ifdim\ht\editeurbox>0pt
\separateur\ajoutref\editeurbox\global\previousitemtrue\fi
\ifdim\ht\anneelbox>0pt \separateur \ajoutref\anneelbox
\fi\global\previousitemfalse}
\def\fabriquearticle
{\ifdim\ht\auteurbox>0pt        \ajoutref\auteurbox
\global\previousitemtrue\fi \ifdim\ht\titrebox>0pt
\separateur\ajoutref\titrebox\global\previousitemtrue\fi
\ifdim\ht\titrelbox>0pt \separateur{\rm in}\
\ajoutref\titrelbox\global \previousitemtrue\fi
\ifdim\ht\journalbox>0pt \separateur
\ajoutref\journalbox\global\previousitemtrue\fi
\ifdim\ht\volumebox>0pt \ \ajoutref\volumebox\fi
\ifdim\ht\anneebox>0pt  \ {\rm(}\ajoutref\anneebox \rm)\fi
\ifdim\ht\pagesbox>0pt
\separateur\ajoutref\pagesbox\fi\global\previousitemfalse}
\def\fabriquedivers
{\ifdim\ht\auteurbox>0pt
\ajoutref\auteurbox\global\previousitemtrue\fi
\ifdim\ht\diversbox>0pt \separateur\ajoutref\diversbox\fi}
\def\endbibref
{\egroup \ifdim\ht\journalbox>0pt \fabriquearticle
\else\ifdim\ht\editeurbox>0pt \fabriquelivre
\else\ifdim\ht\diversbox>0pt \fabriquedivers \fi\fi\fi
.\voidallboxes}

\let\styleauteur=\sc
\let\styletitre=\it
\let\styletitrelivre=\sl
\let\stylejournal=\rm
\let\stylevolume=\bf
\let\styleannee=\rm
\let\stylepages=\rm
\let\stylecollection=\rm
\let\styleediteur=\rm
\let\styleanneelivre=\rm

\begin{biblio}{99}

\begin{bibref}{BeardBiunitary}
\auteur{J. T. B. Beard Jr}  \titre{Bi-Unitary Perfect polynomials over $GF(q)$}
\journal{Annali di Mat. Pura ed Appl.} \volume{149(1)} \pages 61-68 \annee 1987
\end{bibref}

\begin{bibref}{Canaday}
\auteur{E. F. Canaday} \titre{The sum of the divisors of a
polynomial} \journal{Duke Math. J.} \volume{8} \pages 721-737 \annee
1941
\end{bibref}

\begin{bibref}{Gall-Rahav7}
\auteur{L. H. Gallardo, O. Rahavandrainy} \titre{There is no odd
perfect polynomial over $\F_2$ with four prime factors}
\journal{Port. Math. (N.S.)} \volume{66(2)} \pages 131-145 \annee
2009
\end{bibref}

\begin{bibref}{Gall-Rahav5}
\auteur{L. H. Gallardo, O. Rahavandrainy} \titre{Even perfect
polynomials over $\F_2$ with four prime factors} \journal{Intern. J.
of Pure and Applied Math.} \volume{52(2)} \pages 301-314 \annee 2009
\end{bibref}

\begin{bibref}{Gall-Rahav11}
\auteur{L. H. Gallardo, O. Rahavandrainy} \titre{All unitary perfect
polynomials over $\F_{2}$ with at most four distinct irreducible
factors} \journal{Journ. of Symb. Comput.} \volume{47(4)} \pages 492-502 \annee 2012
\end{bibref}

\begin{bibref}{Gall-Rahav13}
\auteur{L. H. Gallardo, O. Rahavandrainy} \titre{Characterization of Sporadic perfect
polynomials over $\F_{2}$ } \journal{Functiones et Approx.} \volume{55(1)} \pages 7-21 \annee 2016
\end{bibref}

\begin{bibref}{Gall-Rahav14}
\auteur{L. H. Gallardo, O. Rahavandrainy} \titre{All even (unitary) perfect polynomials over $\F_2$ with only Mersenne primes as odd divisors} \journal{Kragujevac J. Math.} \volume{49(4)} \pages 639-652 \annee 2025
\end{bibref}

\begin{bibref}{Gall-Rahav15}
\auteur{L. H. Gallardo, O. Rahavandrainy} \titre{All bi-unitary perfect polynomials over $\F_2$ with only Mersenne primes
as odd divisors} \journal{arXiv:2204.13337v3} \annee 2026
\end{bibref}

\end{biblio}

\end{document}